\documentclass[a4paper, 12pt]{amsart}

\usepackage[
a4paper ,hmargin={2.54cm,2.54cm} ,vmargin={3.17cm,3.17cm} ]{geometry}

\usepackage{amsmath, amsthm, amssymb, amsfonts, appendix, bm, comment, commutative-diagrams, enumerate, graphicx, mathrsfs, indentfirst, pgfplots, url, verbatim}
\usepackage{extarrows}
\usepackage{enumitem}

\usepackage[runin]{abstract}

\usepackage[
backend=biber ,date=year ,url=false ,doi=false
    ,eprint=true 
    ,isbn=false 
,style=alphabetic
    ,maxalphanames=4 
,maxnames=99 ,maxbibnames=99 ]{biblatex}
\usepackage[
pdfborder={0 0 0} ,colorlinks=true ,linkcolor=blue ,CJKbookmarks=true ]{hyperref}
\pgfplotsset{compat=1.17}
\newtheorem{theorem}{Theorem}[section]
\newtheorem{primedtheorem}{Theorem}

\newtheorem{corollary}{Corollary}[section]
\newtheorem{lemma}{Lemma}[section]
\newtheorem{proposition}{Proposition}[section]
\theoremstyle{definition}
\newtheorem{definition}{Definition}

\newtheorem{remark}{Remark}
\theoremstyle{plain}

\usepackage{tikz}
\usepackage{lmodern} 
\tikzset{every picture/.style={line width=0.75pt}} 

\numberwithin{equation}{section}

\title[Inverse Quasilinear Currents]{The Calderón Problem for Quasilinear Currents} 

\author[Z. Jin]{Ziyun Jin}
\address{Shanghai Center for Mathematical Sciences, Fudan University, Shanghai 200438, China. }
\email{22110180023@m.fudan.edu.cn}

\date{}

\begin{document}

\maketitle

\begin{abstract}
We study the Calder\'on problem for smooth quasilinear current laws on a bounded domain in $\mathbb{R}^n$, $n\geq3$. Under the stated structural assumptions and analyticity conditions, the interior current law is determined by local Dirichlet-to-Neumann maps, up to a partial-boundary-fixing diffeomorphism and a Hall-effect-type invisible current. The two obstructions come from the linearized inverse problem, which is solved by combining boundary determination for magnetic Schr\"odinger operators with analytic continuation in solution coordinates. Higher-order linearization and a boundary argument involving products of three solution gradients then determine the remaining Taylor coefficients.
\end{abstract}

\textbf{Keywords:} Calderón's problem, inverse problem, anisotropic conductivities, quasilinear elliptic equations, Dirichlet-to-Neumann map\par

\tableofcontents

\section{Introduction}

\textit{Electrical Impedance Tomography} seeks to determine the conductivity of a medium from voltage and current measurements on its boundary. Calder\'on \cite{MR590275} formulated the classical inverse problem for isotropic Ohmic conductors, whose current depends linearly on the electric field. In this paper we consider a non-Ohmic current law depending on position, electric potential, and electric field. In a bounded domain $\Omega\Subset\mathbb{R}^n,~n\geq3$, we denote the current density by a bundle map
\begin{equation*}
    \mathcal{J}:\mathbb{R}\oplus T^*\Omega\to T\Omega,\qquad
    (x,s,\xi)\mapsto \mathcal{J}(x,s,\xi)\in T_x\Omega.
\end{equation*}

The continuity equation of the steady current $\mathcal{J}$ gives the following \textit{Dirichlet problem}:
\begin{equation}\label{eq:quasi_gen}
    \left\{\begin{aligned}
    \partial_k(\mathcal{J}^k (x,u(x),d u(x)))  &= 0 \quad\text{in}~\Omega,\\
    u                     &= f \quad\text{on}~ \partial \Omega,
    \end{aligned}\right.
\end{equation}
where $u\in C^2(\Omega)$ denotes the steady electric potential.

\begin{remark}
    We use Einstein's summation convention throughout this article and always assume $n\geq3$. Unless otherwise specified, indices in $[1,n]\cap\mathbb{Z}$ are denoted by letters $(i,j,k,\ldots)$, those in $[1,n-1]\cap\mathbb{Z}$ by letters $(a,b,c,\ldots)$, and those in $[0,n]\cap\mathbb{Z}$ by Greek letters $(\alpha,\beta,\gamma,\ldots)$. 
\end{remark}

Local uniqueness is understood near a distinguished solution, not among all solutions of the nonlinear equation. For $0<\alpha<1$, write
\begin{equation}\label{eq:def-domain}
    B^{2,\alpha}_{\partial \Omega} (f_0,\delta_0):=\{f \in C^{2,\alpha}(\partial\Omega):
    \|f-f_0\|_{C^{2,\alpha}(\partial\Omega)}<\delta_0\}.
\end{equation}
At each distinguished solution $u^{f_0}$ of \eqref{eq:quasi_gen}, we assume \textit{uniform ellipticity} and local \textit{translation covariance}: there exist constants $C(f_0),\delta'(f_0)>0$ such that
\begin{equation}\label{condition:elliptic}
 \begin{aligned}
 \partial_{\xi_j}\mathcal{J}^k(x,u^{f_0},du^{f_0})\eta_j\eta_k
 &\geq C(f_0)|\eta|^2>0,& \quad \forall\, x\in\overline{\Omega},~\eta_j dx^j\in T^*_x\Omega\backslash\{0\},\\
 u^{f+\lambda}&\equiv u^f+\lambda,& \quad \forall\, \|f-f_0\|_{C^{2,\alpha}(\partial\Omega)}+ |\lambda|<\delta'(f_0),
 \end{aligned}
\end{equation}
while we do \textit{not} require $  \partial_{\xi_j}\mathcal{J}^k $ to be symmetric. Lemma \ref{lmm:forward} then enables us to represent the voltage-current measurement by the density-valued \textit{Dirichlet-to-Neumann (DN) map}
\begin{equation}\label{def:DN}
    \begin{aligned}
        N_\mathcal{J}: B^{2,\alpha}_{\partial\Omega}(f_0,\delta(f_0))
        &\longrightarrow C^{1,\alpha}(\partial\Omega;\Lambda^{n-1}T^*\partial\Omega),\\
        f & \longmapsto \iota^*\left(\mathcal{J}(x,u^f(x), d u^f(x)) \lrcorner (dx^1\wedge dx^2\wedge\cdots\wedge dx^n)\right),
    \end{aligned}
\end{equation}
where $ \iota:\partial \Omega\to \overline{\Omega} $ is the inclusion map and $ \delta(f_0)<\delta'(f_0) $. In fact, if we set
\begin{equation}\label{def:normal-derivative}
    \nu_k:=\iota^*\left(\partial_k \lrcorner (dx^1\wedge dx^2\wedge\cdots\wedge dx^n)\right),
\end{equation}
then $ N_\mathcal{J}(f) = \mathcal{J}^k(x,u^f(x), d u^f(x))\nu_k$. In Euclidean coordinates, $\nu_k=\mathbf{n}_k\,dS$, where $\mathbf{n} = (\mathbf{n}_1, \mathbf{n}_2, \ldots, \mathbf{n}_n)$ is the outward unit normal and $dS$ the boundary area form. We call $N_{\mathcal{J}}(f)$ the \textit{boundary current form}. Its coefficient relative to $dS$ is the normal current density $\mathbf{n}\cdot\mathcal{J}(x,u^f,du^f)$. Thus this convention is the density-valued version of the usual DN map.

Fix a nonempty open subset $\Gamma\subset\partial\Omega$. The local measurements at a constant background $s$ are
\begin{equation}\label{def:local-DN}
 N_{\mathcal{J}}^\Gamma(s,h):=N_{\mathcal{J}}(s+h)|_\Gamma,
 \qquad h\in C_c^\infty(\Gamma),\quad
 \|h\|_{C^{2,\alpha}(\partial\Omega)}<\delta(s),
\end{equation}
where $h$ is extended by zero to the rest of the boundary. The background value $s$ is prescribed on the whole boundary, and the measurements include $N_{\mathcal{J}}^\Gamma(s,0)$.

Moreover, we make the normalization assumption that
\begin{equation}\label{condition:constant-solutions}
    f_0\equiv 0,\qquad u^{0}\equiv 0,\qquad \mathcal{J}(x,0,0)\equiv 0.
\end{equation}
For constant boundary data $ s_0 $ and sufficiently small $ |s-s_0| $, \eqref{condition:elliptic} implies $ u^s-s\equiv u^{s_0}-s_0 $. Since $\mathbb{R}$ is connected and $u^0\equiv 0$ by the normalization, we have $u^s\equiv s$ for every $s\in\mathbb{R}$. Consequently, $\partial_i\mathcal{J}^i(x,s,0)=0$ for all $s$.

\begin{remark}[Normalization of the background]\label{rmk:background-normalization}
Physically, the normalization requires both zero internal potential and zero current when the boundary potential is zero.
Suppose a distinguished solution $u^{f_0}$ satisfies $u^{f_0+\lambda}\equiv u^{f_0}+\lambda$ for small constants $\lambda$. Set
\begin{equation*}
 \widehat{u}:=u-u^{f_0},\qquad
 \widehat{\mathcal{J}}(x,s,\xi):=\mathcal{J}(x,s+u^{f_0}(x),\xi+du^{f_0}(x))-\mathcal{J}(x,u^{f_0}(x),du^{f_0}(x)).
\end{equation*}
For the boundary perturbation $h=f-f_0$, the local solutions and their measurements satisfy
\begin{equation*}
 \widehat{u}^h=u^{f_0+h}-u^{f_0},\qquad
 N_{\widehat{\mathcal{J}}}(h)=N_{\mathcal{J}}(f_0+h)-N_{\mathcal{J}}(f_0).
\end{equation*}
In particular, assumption \eqref{condition:elliptic} gives $\widehat{u}^\lambda\equiv\lambda$ and  $\partial_i\widehat{\mathcal{J}}^i(x,\lambda,0)=0$ for small $\lambda$. 
This does not require $\widehat{\mathcal{J}}(x,\lambda,0)=0$. A spatially varying background shift need not preserve spatial analyticity when the potential dependence is only smooth, so the regularity condition \eqref{condition:regularity} below is imposed on the normalized current itself.

Moreover, if $\mathcal{J}=\mathcal{J}(x,\xi)$ is independent of the potential and satisfies $\mathcal{J}(x,0)=0$, then $u^0\equiv0$ satisfies the normalization \eqref{condition:constant-solutions}. In this case the equation is invariant under addition of constants, so local uniqueness also gives the translation covariance in \eqref{condition:elliptic}.
\end{remark}

The inverse problem is to recover $\mathcal{J}$ from $N_{\mathcal{J}}$. A natural obstruction is that the DN map is invariant under boundary-fixing changes of coordinates.
\begin{definition}[Boundary-fixing diffeomorphisms and current ``push-forward'']
\label{def:boundary-fixing}
A diffeomorphism $\Phi$ is \textit{boundary-fixing} if $\Phi|_{\partial \Omega}=\mathrm{id}$, or equivalently $ \Phi\circ\iota =\iota $. It is \textit{$\Gamma$-fixing} if $\Phi|_\Gamma=\mathrm{id}$. For an orientation-preserving $C^{3,\alpha}$ diffeomorphism, the ``push-forward'' of a current density is
\begin{equation}\label{def:gauge-transform}
 ( H_\Phi \mathcal{J})(y,s,\eta)
 :=\frac{D\Phi(x) \mathcal{J}(x,s,D\Phi(x)^T\eta)}
 {\det D\Phi(x)},\qquad x=\Phi^{-1}(y),
\end{equation}
in oriented coordinates, where $(D\Phi)^i{}_j=\partial_j\Phi^i$.
\end{definition}
Indeed, the transform $ H_\Phi $ is induced by the standard push-forward of differential forms. Let
\begin{equation*}
    \alpha_{\mathcal{J}}[u]:=\mathcal{J}(x,u,du)\lrcorner (dx^1\wedge dx^2\wedge\cdots\wedge dx^n)
\end{equation*}
denote the current $(n-1)$-form. Then \eqref{eq:quasi_gen} is rewritten as $d\alpha_{\mathcal{J}}[u]=0$. The DN map gives the boundary current form $N_{\mathcal{J}}(f)=\iota^*\alpha_{\mathcal{J}}[u^f]$. For a boundary-fixing diffeomorphism $\Phi$, we have
\begin{equation*}
    \alpha_{\mathcal{J}}[u]=\Phi^*\alpha_{ H_\Phi \mathcal{J}}[u\circ\Phi^{-1} ].
\end{equation*}
Since $ d\Phi^*=\Phi^*d $ and $ \iota^*\Phi^*=\iota^* $, we obtain the solution transform $u^f\mapsto u^f\circ\Phi^{-1}$, and the DN map invariance $N_{\mathcal{J}}(f) =\iota^*\alpha_{\mathcal{J}}[u^f]=\iota^*\Phi^*\alpha_{ H_\Phi \mathcal{J}}[u^f\circ\Phi^{-1} ]=N_{ H_\Phi \mathcal{J}}(f)$.

If $\Phi$ fixes only $\Gamma$, the same argument applies to $f=s+h$ with $h\in C_c^\infty(\Gamma)$: these boundary values are unchanged by composition with $\Phi^{-1}$, and the boundary current forms agree on $\Gamma$. Thus $N_{\mathcal{J}}^\Gamma=N_{H_\Phi\mathcal{J}}^\Gamma$.

The second obstruction is a current whose divergence vanishes in the domain and whose normal component vanishes on the observed boundary after substitution of any potential.
\begin{definition}[Invisible Hall-effect-type currents]\label{def:invisible-current}
A \textit{Hall-effect-type current invisible on $\Gamma$} is a current of the form
\begin{equation}\label{form:invisible}
    \delta\mathcal{J}(x,s,\xi):= \left(\mathcal{I}^k(x,s)+\varphi^{kl}(x,s)\xi_l\right)\partial_k,
\end{equation}
with differentiable coefficients $\mathcal{I}^k(x,s)$ and $\varphi^{kl}(x,s)$ satisfying
\begin{equation}\label{condition:invisible}
 \begin{gathered}
  \varphi^{kl}+\varphi^{lk}=0,\qquad
  \partial_k\varphi^{kl}+\partial_s\mathcal{I}^l=0,\qquad
  \partial_k\mathcal{I}^k=0\quad\text{in }\Omega,\\
  \nu_k\varphi^{kl}=0,\qquad \nu_k\mathcal{I}^k=0
  \quad\text{on }\Gamma.
 \end{gathered}
\end{equation}
\end{definition}
It is invisible on $\Gamma$ because, for any $C^2$ function $w$, the following identities hold in $\Omega$ and on $\Gamma$, respectively:
\begin{equation*}
    \begin{aligned}
        \partial_k (\delta\mathcal{J}^k(x,w,dw))&=\varphi^{kl}(x,w)\partial_{k}\partial_{l}w +((\partial_k\varphi^{kl})(x,w)+\partial_s\mathcal{I}^l(x,w))\partial_l w\\
        & \quad+\partial_s\varphi^{kl}(x,w)\partial_k w\partial_l w+(\partial_k\mathcal{I}^k)(x,w)=0,\qquad \\
        \nu_k (\delta\mathcal{J}^k(x,w,dw))&=\nu_k \varphi^{kl}\partial_l w+\nu_k\mathcal{I}^k=0,
    \end{aligned}
\end{equation*}
which means adding $\delta\mathcal{J}$ changes neither the equation \eqref{eq:quasi_gen} nor the measured boundary current. Consequently, $N_{\mathcal{J}}^\Gamma=N_{\mathcal{J}+\delta\mathcal{J}}^\Gamma$. Appendix \ref{app:hall-example} illustrates this antisymmetric obstruction using a linear Hall conductor in a static magnetic field, explaining the term ``Hall-effect-type''.

Our mixed regularity assumption, in real analytic coordinates on $M=\overline{\Omega}$, is
\begin{equation}\label{condition:regularity}
 \begin{aligned}
 &\mathcal{J}\in C^\infty(\mathbb{R}\oplus T^*M,TM),\\
 &\mathcal{J}(\cdot,s,\cdot)\text{ is real analytic on }T^*M,\\
 &\partial_s\mathcal{J}(\cdot,s,0)\text{ is real analytic on }M,
 \end{aligned}
 \qquad s\in\mathbb{R}.
\end{equation}
Both analyticity requirements include extension across the spatial boundary in analytic coordinates. No analyticity in $s$ or uniform analytic radius over the backgrounds is assumed. This condition includes jointly real analytic currents and ensures spatial analyticity of the first linearized coefficients at every constant background.

Under the structural assumptions and normalization stated here, coordinate changes and invisible currents are the only obstructions. We give a precise statement of the main result in the following theorem.

\begin{theorem}[Recovery of quasilinear current]\label{thm:main}
Let $M=\overline{\Omega}$, where $\Omega\subset\mathbb{R}^n$, $n\geq3$, is a bounded connected domain with nonempty real analytic boundary. Let $\mathcal{J}_1,\mathcal{J}_2:\mathbb{R}\oplus T^*M \to TM $ be bundle maps satisfying \eqref{condition:regularity}, \eqref{condition:elliptic}, and the constant-solution normalization \eqref{condition:constant-solutions}. Let $\Gamma\subset\partial M$ be nonempty and open. If, for every $s\in\mathbb{R}$, the local DN maps \eqref{def:local-DN} agree for all $h\in C_c^\infty(\Gamma)$ in a common $C^{2,\alpha}$ neighborhood of zero, then there exists an orientation-preserving $\Gamma$-fixing real analytic diffeomorphism $\Phi:M \to M $ such that
\begin{equation*}
 \mathcal{J}_1(x,s,\xi)
 = H_\Phi \mathcal{J}_2(x,s,\xi)+\delta\mathcal{J}(x,s,\xi),
\end{equation*}
where $\delta\mathcal{J}$ takes the form $\left(\mathcal{I}^k(x,s)+\varphi^{kl}(x,s)\xi_l\right)\partial_k$ as in \eqref{form:invisible}, with $\mathcal{I}$ and $\varphi$ smooth in $(x,s)$, real analytic in $x$ for each fixed $s$, and satisfying \eqref{condition:invisible}.
\end{theorem}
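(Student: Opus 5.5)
The plan is to linearize at each constant background $u^s\equiv s$ and proceed by induction on the order of the Taylor expansion of $\mathcal{J}_\ell(x,s+\cdot,\cdot)$ in $(s',\xi)$ near $(0,0)$.

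\textbf{First order.} Differentiating $N^\Gamma_{\mathcal{J}_\ell}(s,\epsilon h)$ in $\epsilon$ at $\epsilon=0$ gives the local DN map of the linear operator
\[
L_s^{(\ell)}v=\partial_k\bigl(A_\ell^{kj}(x,s)\partial_jv+B_\ell^k(x,s)v\bigr),
\qquad A_\ell^{kj}=\partial_{\xi_j}\mathcal{J}_\ell^k(x,s,0),\quad B_\ell^k=\partial_s\mathcal{J}_\ell^k(x,s,0).
\]
By \eqref{condition:regularity} these coefficients are real analytic in $x$. The identity $\partial_kB^k=0$ holds because $u^s\equiv s$. Translation covariance says that constants solve $L_s^{(\ell)}$ with matching data.

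\textbf{Linear problem at fixed $s$.} Split $A$ into its symmetric and antisymmetric parts. Then $L_s$ is a second-order operator of the form $\partial_k(a^{kj}\partial_j\,\cdot)+(\text{first-order magnetic-type terms})$. I will treat it as a magnetic Schr\"odinger operator for the metric $g$ with $\sqrt{g}\,g^{kj}=a^{kj}$, which is well defined since $n\ge3$.

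Boundary determination for magnetic Schr\"odinger operators then recovers, on $\Gamma$ and in boundary normal coordinates, the full Taylor series of the metric, of the gauge-invariant magnetic part, and of the potential. Analyticity then lets us continue the identification into the interior. I will do this continuation in solution coordinates, i.e.\ harmonic-type coordinates built from solutions of $L_s$ with data on $\Gamma$, in the spirit of Lassas--Uhlmann. The outcome should be a $\Gamma$-fixing analytic diffeomorphism $\Phi_s$ such that $A_1-H_{\Phi_s}A_2$ is antisymmetric and the remaining discrepancy $(\varphi,\mathcal{I})$ satisfies \eqref{condition:invisible} at first order.

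Next I show that $\Phi_s$ is independent of $s$. Given $\Phi_s$, the linearized equation for the difference forces $\varphi,\partial_s\mathcal{I}$ into exactly the constraints of Definition~\ref{def:invisible-current}. If the gauge is unique, so that the metric determines $\Phi$ when it fixes $\Gamma$ and an analytic continuation argument applies, then the $\Phi_s$ coincide for nearby $s$. Connectedness of $\mathbb{R}$ then gives a single $\Phi$.

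After replacing $\mathcal{J}_2$ by $H_\Phi\mathcal{J}_2$, the first-order parts differ by $\varphi^{kl}(x,s)\xi_l+\partial_s$-terms. I define $\mathcal{I}^k(x,s)=\int_0^s B^k$-difference. This works because $\mathcal{J}_\ell(x,0,0)=0$ and the zeroth-order term $\mathcal{J}(x,s,0)$ is determined by integrating $\partial_s\mathcal{J}(x,s,0)$ in $s$. The resulting $\mathcal{I}$ is divergence-free and, on $\Gamma$, normal-free, which uses $N^\Gamma(s,0)$.

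\textbf{Higher orders.} Subtract $\delta\mathcal{J}$, so that $\mathcal{J}_1-H_\Phi\mathcal{J}_2$ vanishes to first order at every $(x,s,0)$. For order $m\ge2$, differentiate $N^\Gamma(s,\sum\epsilon_ih_i)$ in $\epsilon_1\cdots\epsilon_m$. The lower orders agree, so this yields the integral identity
\[
\int_\Omega \partial^m_{(s,\xi)}\bigl(\mathcal{J}_1-\mathcal{J}_2\bigr)(x,s,0)\bigl[\text{products of } v_i,\ dv_i\bigr]\cdot dv_0\,dx=0,
\]
where the $v_i$ solve the common linear equation with data supported in $\Gamma$.

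I then argue in two steps. First, boundary determination: use concentrating (Gaussian-beam or boundary-localized oscillatory) solutions near a point of $\Gamma$, and take products of three solution gradients to separate the symmetric tensor components. This recovers the coefficients and all their normal derivatives on $\Gamma$. Second, since the coefficients are analytic in $x$, the equality extends to all of $\Omega$.

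The potential derivatives $\partial_s^j$ are obtained by varying the background $s$, without any analyticity in $s$. Since $\mathcal{J}$ is smooth but not analytic in $\xi$ jointly with $s$, I need $\mathcal{J}(\cdot,s,\cdot)$ to be analytic on $T^*M$. This lets the Taylor coefficients in $\xi$ at $\xi=0$ determine $\mathcal{J}(x,s,\xi)$ for all $\xi$.

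\textbf{Main obstacle.} I expect the hardest step to be the first-order one with a nonsymmetric $A$. Two points need care there. One is showing that the antisymmetric part is genuinely invisible only through \eqref{condition:invisible}, coupled with $\partial_s\mathcal{I}$. The other is making the diffeomorphism produced by analytic continuation in solution coordinates both global on $M$ and independent of $s$. I would first handle the case $\Gamma=\partial M$ and then localize.
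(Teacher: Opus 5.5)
\textbf{The step making $\Phi_s$ independent of $s$.} Your overall architecture is the paper's. Its steps are: magnetic reduction of the first linearization at each constant background; boundary determination plus analytic continuation in solution coordinates; higher-order linearization with boundary-localized solutions; potential derivatives from varying $s$; and analyticity in $\xi$ at the end. The genuine gap is your argument for $s$-independence. Uniqueness of the $\Gamma$-fixing identification at each fixed $s$ gives no relation between different backgrounds. The map $\Phi_s$ is determined by the induced triples $(g_{m,s},A_{m,s},q_{m,s})$, which themselves vary with $s$, and there is no analyticity in $s$ to continue along.

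No argument that only uses ``constants solve $L_s$'' (i.e.\ $u^{s+t}=u^s+t$ for constant data) can work. Appendix~\ref{app:background-dependent-gauge} gives currents with identical nonlinear DN maps near every constant background and a unique identification at each $s$, yet $\Phi_s=F_s^{-1}$ genuinely moves with $s$.

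What is needed is translation covariance at \emph{perturbed} data, $u^{s+\epsilon h}=u^{s_0+\epsilon h}+(s-s_0)$. Differentiating in $\epsilon$ shows that the first linearized solutions $w^h_{m,s}$ are independent of $s$, even though the coefficients of $L_s$ are not. Hence the local Poisson embeddings $\mathscr{P}^\Gamma_m$ are independent of $s$. Then $\mathscr{P}^\Gamma_1\circ\Phi_s=\mathscr{P}^\Gamma_2=\mathscr{P}^\Gamma_1\circ\Phi_{s_0}$ and injectivity of $\mathscr{P}^\Gamma_1$ give $\Phi_s=\Phi_{s_0}$ (Proposition~\ref{prop:analytic-continuation}). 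Without this you cannot differentiate the first-order identities in $s$. That differentiation is what yields $\partial_s\mathcal{I}=\partial_j\varphi^{ij}$ and every higher coefficient carrying an $s$-index. You also do not get the single $\Phi$ of the statement.

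\textbf{The conformal factor at fixed $s$.} The magnetic boundary determination you invoke does not give the full boundary Taylor series of the metric. For general magnetic Schr\"odinger operators, the DN map is unchanged under the magnetic gauge (a scalar conjugation). It is also unchanged under $g\mapsto c^{-1}g$ with $c|_{\partial M}=1$, $dc|_{\partial M}=0$, together with a compensating potential. The constant solution $w\equiv1$ removes the conjugation, as in the paper, but not the conformal factor. Boundary determination plus solution coordinates then yields only $\mathcal{P}_2=c\,\Phi^*\mathcal{P}_1$. So the symmetric parts of your matrices $A_1$ and $H_\Phi A_2$ agree only up to the positive factor $c^{1-n/2}$, and your antisymmetry claim needs a proof that $c\equiv1$.

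The paper obtains this from the coclosedness $d_g^\star a=0$ of the real drift (Lemma~\ref{lmm:coclosed-magnetic-potential}), which comes from $\partial_iB^i=0$. With $\omega=\frac{n-2}{4}\log c$, the function $z=e^{-2\omega}$ solves a linear elliptic equation with Cauchy data $(1,0)$ on $\Gamma$. Hence $z\equiv1$ by Holmgren uniqueness and analytic continuation. Equivalently, $L_2=\rho\,\Phi^*L_1$ with both operators in divergence form forces $(\Phi^*L_1)^\star\rho=0$, with $\rho=1$ and $d\rho=0$ on $\Gamma$.

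\textbf{Smaller omissions.} The first linearized DN map is not the magnetic one. They differ by a boundary operator built from $\nu_i\varphi^{ij}$ and $\nu_i(B^i-\partial_k\varphi^{ki})$. This operator must first be shown to be determined by the data (Lemma~\ref{lmm:bdr-1}) before magnetic boundary determination applies. In your higher-order identity, the test solution $v_0$ must solve $L^\star v_0=0$, not $Lv_0=0$, since $L$ is not formally self-adjoint.
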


\medskip
\noindent\textbf{The linearized inverse problem.} The first step in recovering the quasilinear current is to solve the linearized inverse problem. For a fixed constant background $s$, write the first-order linearized coefficients and operator as
\begin{equation}\label{def:first-linearized-data}
 \begin{aligned}
 D_s^{ij}&:=\partial_{\xi_j}\mathcal{J}^i(x,s,0),
 \qquad B_s^i:=\partial_s\mathcal{J}^i(x,s,0),\\
 L_sw&:=\partial_i(D_s^{ij}\partial_jw+B_s^iw),\\
 (D^1N_{\mathcal{J}})_sh&=\nu_i(D_s^{ij}\partial_jw_s^h+B_s^iw_s^h),
 \end{aligned}
\end{equation}
where $L_sw_s^h=0$ and $w_s^h|_{\partial M}=h$. Set $\mathcal{H}_\Gamma=\mathbb{R}+C_c^\infty(\Gamma)$. The first linearized measurements on this space are determined by differentiating \eqref{def:local-DN} in the supported perturbation and in the constant background.

The linearized inverse problem itself identifies the two obstructions to recovery. We give the precise statement in the following theorem, which is proved in Section \ref{sec:first-order}.
\begin{theorem}[Recovery of linearized current]\label{thm:first-order}
Let $M$ and $\Gamma$ be as in Theorem \ref{thm:main}, and let $\mathcal{J}_1,\mathcal{J}_2$ satisfy \eqref{condition:regularity}, \eqref{condition:elliptic}, and \eqref{condition:constant-solutions}. Suppose that
\begin{equation*}
 (D^1N_{\mathcal{J}_1})_sh=(D^1N_{\mathcal{J}_2})_sh
 \quad\text{on }\Gamma,\qquad
 h\in\mathcal{H}_\Gamma,\quad s\in\mathbb{R}.
\end{equation*}
Then there is one orientation-preserving $\Gamma$-fixing real analytic diffeomorphism $\Phi:M\to M$, independent of $s$, and an antisymmetric field $\varphi_{(1)}(x,s)$, smooth in $(x,s)$ and real analytic in $x$ for each fixed $s$, such that, for every $s\in\mathbb{R}$,
\begin{align}
 \delta D_s^{ij} :=\partial_{\xi_j}(\mathcal{J}_1-H_\Phi\mathcal{J}_2)^i(x,s,0)
 &=\varphi_{(1)}^{ij},\label{eq:1st-order-coeff-a}\\
 \delta B_s^i :=\partial_s(\mathcal{J}_1-H_\Phi\mathcal{J}_2)^i(x,s,0)
 &=\partial_j\varphi_{(1)}^{ij},\label{eq:1st-order-coeff-b}\\
 \varphi_{(1)}^{ij}&=-\varphi_{(1)}^{ji},
 \qquad \nu_i\varphi_{(1)}^{ij}|_\Gamma=0.
 \label{eq:1st-order-coeff-c}
\end{align}
In particular, the (first-order) linearizations of $\mathcal{J}_1$ and $H_\Phi\mathcal{J}_2$ coincide as differential expressions at every constant background.
\end{theorem}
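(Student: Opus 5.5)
We will reduce Theorem \ref{thm:first-order} to a linear anisotropic Calder\'on problem with a first-order (magnetic-type) term, solved background by background, and then glue the backgrounds together. Fix $s$. The operator $L_s w=\partial_i(D_s^{ij}\partial_j w+B_s^iw)$ has real analytic coefficients up to the boundary by \eqref{condition:regularity}, and $D_s$ is uniformly elliptic by \eqref{condition:elliptic}. The normalization gives $\partial_iB_s^i=\partial_i\partial_s\mathcal{J}^i(x,s,0)=0$, so constants solve $L_sw=0$. Split $D_s=G_s+A_s$ into symmetric and antisymmetric parts. The antisymmetric part contributes $\partial_i(A_s^{ij}\partial_jw)=(\partial_iA_s^{ij})\partial_jw$, so $L_s$ is a conductivity operator with symmetric part $G_s$ plus a divergence-free first-order drift $b_s^j:=\partial_iA_s^{ij}+B_s^j$. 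We also write $g_s$ for the Riemannian metric associated with $G_s$ via $G_s^{ij}=\sqrt{|g_s|}\,g_s^{ij}$, which is possible since $n\ge 3$.

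\textbf{Boundary determination.} We first treat a point of $\Gamma$. In boundary normal coordinates we invoke boundary determination for magnetic Schr\"odinger operators, i.e.\ the symbol expansion of the DN map for $L_s$. This recovers from $(D^1N_{\mathcal{J}})_s$ on $\Gamma$ the full Taylor series at $\Gamma$ of $g_s$, up to a boundary-fixing gauge. It also recovers $b_s$ modulo gauge terms $\partial_j\varphi^{ij}$ with $\varphi$ antisymmetric, together with the conformal factor, since $n\ge3$ and constants are solutions, so no conformal or potential ambiguity remains. Analyticity then gives equality near $\Gamma$ after a local analytic diffeomorphism.

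\textbf{Interior continuation.} To propagate inward, we follow the Lassas--Uhlmann approach. Embed $M$ into a function space by solutions (Green's functions of $L_s$ with poles outside $\Gamma$ in a small extension of the domain) and use the fact that the DN data determine these solutions near $\Gamma$. Analytic continuation along paths in ``solution coordinates'' then yields an analytic diffeomorphism $\Phi_s$ with $\Phi_s|_\Gamma=\mathrm{id}$. It intertwines the two operators, giving $L^{(1)}_s=\Phi_{s*}L^{(2)}_s$ in divergence form up to an antisymmetric gauge. Concretely, $\delta D_s$ is antisymmetric, $\delta B_s=\partial_j\varphi^{ij}$, and the boundary terms force $\nu_i\varphi^{ij}=0$ on $\Gamma$. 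We expect the main obstacle to be making the non-self-adjoint (Hall) part compatible with the continuation. For the gauge, we will use that the $(n-1)$-form $\delta b\lrcorner dx$ is closed, hence exact on the simply connected collar where it vanishes at $\Gamma$, and then extend the exactness along the continuation. If topology obstructs this, we will instead define $\varphi$ directly as the antisymmetric part of the difference of coefficients, $\varphi=\delta A$. We then show that the symmetric parts agree and that $\delta B=\partial_j\delta A^{ij}$ by matching the drifts $b$.

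\textbf{Independence of $s$.} Next we show that $\Phi_s$ does not depend on $s$. The metric $g_s$ determines $\Phi_s$ uniquely: a $\Gamma$-fixing isometry between connected analytic manifolds is determined by its restriction to $\Gamma$, since its first jet is fixed there. Hence we only need continuity, or local constancy, in $s$. Since the diffeomorphisms $\Phi_s\circ\Phi_{s'}^{-1}$ are $\Gamma$-fixing and intertwine $g_{s'}^{(2)}$ with a metric near $g_s^{(2)}$, and since the diffeomorphism in $H_\Phi$ acts on all of $\mathcal{J}(x,s,\cdot)$, we plan to use the data in $\mathcal{H}_\Gamma$, in particular differentiation in the constant direction $s$, to pin down a single $\Phi$. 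We expect this step to be delicate, and we will try first to show that $\Phi_{s}$ depends smoothly on $s$ through the Green-function embedding and that $\partial_s\Phi_s$ is a $\Gamma$-vanishing Killing-type field, hence zero. Smoothness of $\varphi_{(1)}$ in $(x,s)$ then follows from the explicit formula $\varphi_{(1)}=$ the antisymmetric part of $\delta D_s$.
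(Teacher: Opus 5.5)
\textbf{The gap is the independence of $s$.} Your argument for this step uses only analyticity, ellipticity, equality of the linearized data at every $s$, and the constant solutions $u^s\equiv s$. From the normalization you extract only $\partial_iB^i_s=0$. These hypotheses cannot suffice. The currents of Proposition \ref{prop:background-counterexample} satisfy all of them and even have identical nonlinear DN maps, yet their identifications are $\Phi_s=F_s^{-1}$, which genuinely vary with $s$.

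Concretely, each of your proposed mechanisms fails:
\begin{itemize}
\item Uniqueness of a $\Gamma$-fixing isometry at each fixed $s$ says nothing about comparing two different backgrounds, and continuity in $s$ does not give constancy.
\item The Killing-field idea is circular. Differentiating $g_{2,s}=\Phi_s^*g_{1,s}$ gives $\partial_sg_{2,s}=\Phi_s^*(\partial_sg_{1,s}+\mathcal{L}_{V_s}g_{1,s})$ with $V_s=(\partial_s\Phi_s)\circ\Phi_s^{-1}$. So $V_s$ is Killing only if you already know that $\Phi_s$ identifies the $s$-derivatives of the metrics, which is exactly what is at stake.
\item Differentiating in the constant direction adds nothing, since $(D^2N)_s[1,h]=\partial_s\bigl((D^1N)_sh\bigr)$ is already determined by the first-order data.
\end{itemize}

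\textbf{The missing idea} is to use the translation covariance in \eqref{condition:elliptic} at perturbed, non-constant boundary data. For $s$ near $s_0$ and small $\epsilon$, it gives $u_m^{s+\epsilon h}=u_m^{s_0+\epsilon h}+(s-s_0)$. Differentiating in $\epsilon$ yields $w_{m,s}^h=w_{m,s_0}^h$. Thus each first linearized Dirichlet solution family is independent of $s$, even though $L_s$ itself may vary, and so is the local Poisson embedding built from it. Then $\mathscr{P}_1^\Gamma\circ\Phi_s=\mathscr{P}_2^\Gamma=\mathscr{P}_1^\Gamma\circ\Phi_{s_0}$, and injectivity of $\mathscr{P}_1^\Gamma$ forces $\Phi_s=\Phi_{s_0}$. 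No regularity of $s\mapsto\Phi_s$ is needed.

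Your fallback for the Hall gauge matches the paper: take $\varphi_{(1)}=\delta D$ once the symmetric parts and the drifts agree. The earlier steps, however, are too loose as written:
\begin{itemize}
\item The drift $b_s$ is invariant under the Hall gauge. It must be recovered exactly, not ``modulo $\partial_j\varphi^{ij}$''; otherwise $L_1=L_2$ would not follow.
\item The current DN map differs from the symmetric/magnetic one by the boundary operator in \eqref{eq:DN-difference}. You must first show this operator is determined by the data, using the odd part of the principal symbol and the datum $h=1$.
\item The conformal factor is not removed by constants being solutions, since $\lambda L$ also annihilates constants. It is removed by the coclosedness (divergence-form structure) of both drifts, which gives an elliptic equation for $\log\lambda$ with zero Cauchy data on $\Gamma$, followed by Cauchy uniqueness.
\end{itemize}
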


This theorem includes the linear current law $\mathcal{J}^i=D^{ij}(x)\xi_j+B^i(x)s$ with nonsymmetric $D$ and $\partial_iB^i=0$, as required by the constant-solution assumption. Subsection \ref{subsec:magnetic-reduction} reduces its boundary measurements to those of a magnetic Schr\"odinger operator. The following uniqueness theorem recovers the induced triple, and Subsection \ref{subsec:first-order-recovery} translates this recovery back to the current coefficients.

\begin{primedtheorem}[Recovery of the magnetic Schr\"odinger triple]\label{thm:magnetic-recovery}
Let $M$ and $\Gamma$ be as in Theorem \ref{thm:main}. For $m=1,2$, let $g_m$ be a Riemannian metric and $a_m$ a real one-form, both real analytic on $M$, satisfying $d_{g_m}^\star a_m=0$. Set $A_m=-ia_m$ and $q_m=|a_m|_{g_m}^2$. If the density-valued magnetic DN maps \eqref{def:lin-Sch-DN} satisfy
\begin{equation*}
 \Lambda_{g_1,A_1,q_1}h=\Lambda_{g_2,A_2,q_2}h
 \quad\text{on }\Gamma,\qquad h\in\mathcal{H}_\Gamma,
\end{equation*}
then there is a unique orientation-preserving $\Gamma$-fixing real analytic diffeomorphism $\Phi:M\to M$ such that
\begin{equation*}
 g_2=\Phi^*g_1,\qquad A_2=\Phi^*A_1,\qquad q_2=\Phi^*q_1.
\end{equation*}
The corresponding Dirichlet solutions satisfy $w_2^h=w_1^h\circ\Phi$ for every $h\in\mathcal{H}_\Gamma$.
\end{primedtheorem}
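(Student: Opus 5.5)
Our plan follows the Lee--Uhlmann route for real analytic anisotropic problems, adapted to magnetic Schrödinger operators with Coulomb-gauge potential and $q=|a|^2_g$. The proof has four stages: boundary determination on $\Gamma$, construction of $\Phi$ by analytic continuation in solution coordinates, identification of $A$ and $q$, and uniqueness.

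\emph{Boundary determination.} Since $\Lambda_{g_m,A_m,q_m}$ is density-valued, no volume normalization enters. Taking boundary normal (Fermi) coordinates of $g_m$ near each point of $\Gamma$, we factor the operator as in Lee--Uhlmann: the DN map is a pseudodifferential operator on $\Gamma$, and its full symbol determines all normal derivatives of the metric, the magnetic potential and $q$ at $\Gamma$. For the magnetic term we use the boundary determination results for magnetic Schrödinger operators in the form that recovers $A$ up to gauge. The gauge freedom $A\mapsto A+d\psi$ with $\psi|_\Gamma=0$ is fixed by two facts. First, $a$ is real and $d^\star_g a=0$. Second, $q$ is tied to $A$ by $q=|a|_g^2$. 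Together these should force $\psi\equiv0$ to infinite order, so $A$ itself (not merely $dA$) is determined in Fermi coordinates. Boundary normal coordinates then give a local diffeomorphism $\Phi_0$, defined near $\Gamma$ and fixing $\Gamma$, such that $g_2=\Phi_0^*g_1$, $A_2=\Phi_0^*A_1$ and $q_2=\Phi_0^*q_1$ to infinite order at $\Gamma$. By real analyticity, and after shrinking $\Gamma$ if needed, these identities hold on a one-sided neighbourhood $U\subset M$ of $\Gamma$.

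\emph{Analytic continuation in solution coordinates.} We define $\Phi$ on $U$ as $\Phi_0$. Then $w_2^h=w_1^h\circ\Phi$ on $U$ for $h\in\mathcal H_\Gamma$, because both sides solve the same equation with the same Cauchy data on $\Gamma$. To extend $\Phi$, we use as coordinates $n$ solutions $(w^{h_1},\dots,w^{h_n})$ with independent differentials at a point; Runge approximation from $\Gamma$ provides these, and the constant $1\in\mathcal H_\Gamma$ helps with zeroth-order effects. Equality of the DN maps on $\Gamma$ gives equality of the Green's functions $G_1(\Phi x,\Phi y)$ and $G_2(x,y)$ for $x,y$ in a thickened collar outside $M$ near $\Gamma$. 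Both sides are analytic off the diagonal, so this identity continues along paths into $M$. The map $\Phi$ is then defined by matching $y\mapsto G_m(\cdot,y)$, the Lee--Uhlmann embedding into a function space, which yields a global analytic map $M\to M$. A symmetric argument for $\Phi^{-1}$ shows that $\Phi$ is a diffeomorphism that preserves orientation (it does so near $\Gamma$, and $M$ is connected). The relation $w_2^h=w_1^h\circ\Phi$ then holds everywhere by unique continuation.

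\emph{Identifying the coefficients and uniqueness.} With $w_2^h=w_1^h\circ\Phi$ everywhere, pull back the operator for $m=1$ by $\Phi$. Both operators annihilate the same large family of solutions. Comparing them on enough solutions (via Runge approximation, products of gradients, and the constant solution) gives $g_2=\Phi^*g_1$, $A_2=\Phi^*A_1$ and $q_2=\Phi^*q_1$, where $A$ is pinned down, not just up to gauge, by the same Coulomb-gauge and $q=|a|^2$ argument as in the first stage. For uniqueness, suppose $\Phi'$ also works. Then $\Psi=\Phi'\circ\Phi^{-1}$ is an isometry of $g_1$ fixing $\Gamma$ pointwise. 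Its differential on $\Gamma$ is the identity, since $\Psi$ preserves the inward normal and orientation, so $\Psi=\mathrm{id}$ because isometries are determined by their 1-jet at a point.

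\emph{Main obstacle.} We expect the hardest step to be the gauge issue. Magnetic DN maps are invariant under $A\mapsto A+d\psi$, $\psi|_\Gamma=0$. We must show that the structural constraints ($a$ real, $q=|a|^2$, $d^\star a=0$) eliminate this freedom, both in the boundary symbol computation and globally. Our plan is to show that any residual $\psi$ satisfies the Laplace equation $\Delta_g\psi=0$, with $\psi=\partial_\nu\psi=0$ on $\Gamma$, and hence vanishes by unique continuation. The lemma that $\psi$ must satisfy this system has to come out of the boundary symbol computation and the global comparison.
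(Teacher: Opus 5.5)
\textbf{There is a genuine gap, and it sits where you put the ``main obstacle'': you have identified the wrong gauge.}

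With unknown metric and unknown lower-order terms in dimension $n\geq3$, the magnetic DN map does not determine the normal jets of $g$, $A$, $q$ at $\Gamma$, not even up to $A\mapsto A+d\psi$. It is blind to conformal changes $g\mapsto c^{-1}g$ with $c|_{\partial M}=1$, $dc|_{\partial M}=0$. This is why magnetic boundary determination is only available in a conformally normalized gauge: in the symbol recursion, the even part at each order sees a normal derivative of the metric only in combination with a normal derivative of $q$.

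In the present class this freedom survives. Because $w^1\equiv1$, any scalar conjugation must be trivial, and the residual gauge is exactly
\[
 (g,a)\longmapsto\Bigl(c^{-1}g,\ a+\tfrac{n-2}{4}\,d\log c\Bigr),\qquad
 \mathcal{P}\longmapsto c\,\mathcal{P}.
\]
This preserves the solution space and the drift form, i.e.\ $\mathcal{P}1=0$, equivalently $q=|a|_g^2-d_g^\star a$. Since $c=1$ and $dc=0$ on $\Gamma$, it also preserves the DN map on $\Gamma$. Only the coclosedness $d_g^\star a=0$ (equivalently $q=|a|_g^2$) can rule it out. Consequently two of your claims are unsupported:
\begin{itemize}
\item the stage~1 claim that Fermi coordinates give $g_2=\Phi_0^*g_1$ to infinite order;
\item the stage~3 claim that equality of the solution families gives $g_2=\Phi^*g_1$. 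Equality of solutions only yields $\mathcal{P}_2=c\,\Phi^*\mathcal{P}_1$ for some positive $c$ with $c=1$, $dc=0$ on $\Gamma$.
\end{itemize}
The pure magnetic gauge you worry about is not an obstacle. With $g$ fixed, $a\mapsto a+d\phi$ with $\phi|_{\partial M}=0$ is conjugation by $e^{\phi}$. If both operators annihilate constants, then $e^{\phi}$ solves the first equation with Dirichlet data $1$, so $\phi\equiv0$ by Dirichlet uniqueness. Your harmonic-$\psi$ lemma presupposes that the metric has already been recovered exactly, so it never touches the real ambiguity.

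\textbf{The missing idea is to use coclosedness against the conformal factor.} Write $\widehat g_1=\Phi^*g_1$, $\widehat a_1=\Phi^*a_1$ and $\omega=\frac{n-2}{4}\log c$. Then $g_2=c^{-1}\widehat g_1$ and $a_2=\widehat a_1+d\omega$. The conformal rule $c^{-1}d_{g_2}^\star V=d_{\widehat g_1}^\star V+2\langle d\omega,V\rangle_{\widehat g_1}$ turns $d_{g_2}^\star a_2=0=d_{\widehat g_1}^\star\widehat a_1$ into
\[
 -\Delta_{\widehat g_1}\omega+2\langle d\omega,\widehat a_1+d\omega\rangle_{\widehat g_1}=0,
 \qquad \omega|_\Gamma=0,\quad d\omega|_\Gamma=0 .
\]
The substitution $z=e^{-2\omega}$ makes this linear: $\Delta_{\widehat g_1}z-2\langle\widehat a_1,dz\rangle_{\widehat g_1}=0$, with $z=1$ and $dz=0$ on $\Gamma$. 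Holmgren uniqueness plus analytic continuation then give $c\equiv1$.

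The same computation carried out on Taylor jets would repair your Fermi-coordinate stage~1. Once the conformal jet is shown to be trivial, the analytic boundary normal coordinates of $g_1$ and $g_2$ do correspond. The paper instead carries the factor through the construction of $\Phi$ and removes it globally at the end. It uses solution coordinates near $\Gamma$, which are insensitive to the factor, and then Poisson-embedding continuation. With this fixed, your Green's function globalization and your isometry-based uniqueness argument are reasonable alternatives. You still owe the extension of $\Phi$ across $\partial M\setminus\Gamma$.
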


Subsection \ref{subsec:analytic-extension} proves Theorem \ref{thm:magnetic-recovery} by combining boundary determination, $Z$-coordinates, and global extension via Poisson embeddings. The constant solution removes scalar conjugation, and coclosedness of the drift removes the remaining conformal factor by Cauchy uniqueness. In Subsection \ref{subsec:first-order-recovery}, translation covariance makes the first solution variations independent of $s$, so the identification of the induced triples is the same at every background.

\medskip
\noindent\textbf{Recovery of higher-order terms.} After the fixed identification from Theorem \ref{thm:first-order}, Section \ref{sec:higher-order} simultaneously recovers higher current coefficients and compares the corresponding solution variations. Differentiation in $s$ gives the coefficients containing potential derivatives. The remaining coefficients, containing only gradient derivatives, enter a three-gradient integral identity after the other solution factors are fixed. The following proposition determines the resulting tensor from local boundary data. For $L=L_s$ in \eqref{def:first-linearized-data}, its formal transpose is
\begin{equation*}
 L^\star v=\partial_j(D_s^{ij}\partial_i v)-B_s^i\partial_i v,
\end{equation*}
with the coordinate density convention used in the integral below.

\begin{proposition}[Three-gradient boundary determination]\label{prop:three-gradient-boundary}
Let $M$ and $\Gamma$ be as in Theorem \ref{thm:main}, and let $L=L_s$ be the first linearized operator of a current satisfying \eqref{condition:regularity}, \eqref{condition:elliptic}, and \eqref{condition:constant-solutions}, at a fixed constant background $s$. Let $C\in\Gamma(M,(TM)^{\otimes3}\otimes\Lambda^nT^*M)$ be a real smooth contravariant three-tensor density with $C^{ijk}=C^{ikj}$. Suppose that
\begin{equation}\label{eq:int-id-3-1}
 I(C;\widetilde{w};w_1,w_2)
 :=\int_M C^{ijk}\partial_jw_1\partial_kw_2
                 \partial_i\widetilde{w}\,dx=0
\end{equation}
for all $w_1,w_2,\widetilde{w}\in C^\infty(M)$ with $Lw_1=Lw_2=0$ and $L^\star\widetilde{w}=0$, whose boundary traces lie in $C_c^\infty(\Gamma)$. Then
\begin{equation*}
 \partial_\nu^rC|_\Gamma=0,\qquad \forall\,r\in\mathbb{Z}_{\geq0}.
\end{equation*}
Here $\partial_\nu^r$ differentiates the tensor components in inward boundary normal coordinates. Vanishing of the full boundary jet is independent of that coordinate choice. If $C$ is real analytic in $M^\circ$ and up to a nonempty open subset $\Sigma\subset\Gamma$, then $C=0$ on $M$.
\end{proposition}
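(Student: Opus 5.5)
The argument has two parts. First we determine the boundary jet of $C$ by concentrating solutions near a point of $\Gamma$. Then we pass to the interior by analytic continuation combined with a density (Runge-type) argument.

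\textbf{Boundary jet.} Fix $x_0\in\Gamma$ and work in inward boundary normal coordinates $(x',x^n)$ near $x_0$, with $x^n\ge0$. The plan is to use the standard oscillatory boundary solutions of Kohn--Vogelius / Lee--Uhlmann type, as in boundary determination for magnetic Schr\"odinger operators. For a real covector $\xi'\in\mathbb{R}^{n-1}\setminus\{0\}$ and a large parameter $\lambda$, construct approximate solutions
\[
w\approx \chi(x')\,e^{i\lambda(x'\cdot\xi'+ x^n\zeta(x,\xi'))}\,a(x,\lambda).
\]
Here $\zeta$ solves the (non-symmetric but elliptic) eikonal equation $D^{ij}_s p_ip_j=0$ with $\operatorname{Im}\zeta>0$. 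Ellipticity gives exactly one such root in the upper half plane. The amplitude $a$ is obtained from transport equations to arbitrary order. The phase then decays like $e^{-c\lambda x^n}$, and the cutoff $\chi$ keeps the trace in $C_c^\infty(\Gamma)$. The approximate solutions are corrected to exact ones by solving a Dirichlet problem with an $O(\lambda^{-N})$ error. This step uses the unique solvability from Lemma \ref{lmm:forward}, or Fredholm arguments at the linearized level.

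\textbf{Choice of phases.} Since $C$ is real, take real and imaginary parts, or equivalently use complex solutions by linearity in each slot. Choose $w_1$ with tangential frequency $\xi'$, $w_2$ with frequency $\eta'$, and $\widetilde w$ (for $L^\star$, with $\widetilde\zeta$ from the transposed symbol, which has the same principal part) with frequency $-(\xi'+\eta')$. This makes the tangential oscillations cancel. The integrand is then $\lambda^3$ times a nonoscillatory exponential $e^{-\lambda x^n\,\mathrm{Im}(\dots)}$ times $C^{ijk}p^{(1)}_jp^{(2)}_kp^{(0)}_i$, evaluated on the complex covectors $p^{(m)}=(\xi',\zeta)$. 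Expanding $C$ in a Taylor series in $x^n$ gives, at leading order $\lambda^{3-n}\cdot\lambda^{-1}$,
\[
\int \chi^2\,C^{ijk}(x',0)\,p^{(1)}_jp^{(2)}_kp^{(0)}_i\,dx'.
\]
Letting $\chi$ concentrate at $x_0$ shows that the cubic form
\[
(\xi',\eta')\mapsto C^{ijk}(x_0)\,p_j(\xi')\,p_k(\eta')\,p_i(-\xi'-\eta')
\]
vanishes identically. Since $p(\xi')=(\xi',\zeta(\xi'))$ with $\zeta$ complex and not a polynomial of $\xi'$ in general, we will show that these products span enough. The approach is to vary $\xi',\eta'$ independently, including scaling $\xi'\to t\xi'$ with $\zeta$ homogeneous of degree one, and to use the symmetry $C^{ijk}=C^{ikj}$. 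Together these should force all components $C^{ijk}(x_0)=0$. This algebraic spanning step is the main obstacle. The natural first attempt is the Euclidean model after a linear change making $D^{ij}(x_0)$ have identity symmetric part, where $\zeta=i|\xi'|+(\text{antisymmetric correction})$. One then checks that the functions $|\xi'|,|\eta'|,|\xi'+\eta'|$ multiplied by monomials are linearly independent over polynomials.

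\textbf{Higher normal derivatives.} Proceed by induction on $r$. If $\partial_\nu^{t}C|_\Gamma=0$ for $t<r$, the Taylor expansion starts at $(x^n)^r$. The leading term then appears at order $\lambda^{-r-1}$ relative to the previous one. All amplitude and remainder corrections are lower order, and the same algebraic argument applied to $\partial_\nu^rC(x_0)$ kills it. Coordinate independence follows because vanishing of the full jet is preserved under smooth changes of coordinates and under multiplication by the Jacobian density.

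\textbf{Interior.} If $C$ is analytic in $M^\circ$ and up to $\Sigma$, vanishing of its jet on $\Sigma$ gives $C\equiv0$ near $\Sigma$ by analyticity. It then vanishes on $M^\circ$ by connectedness, and on $M$ by continuity.
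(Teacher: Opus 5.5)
Your architecture is the paper's: boundary-layer solutions for $L$, $L$, $L^\star$ with tangential frequencies $\xi',\eta',-(\xi'+\eta')$, a pointwise cubic identity at leading order, induction on normal jets, and analytic continuation from $\Sigma$. The genuine gap is the step you flag yourself and leave at ``should force''. You never prove that
\[
 C^{ijk}(x_0)\,p(-\xi'-\eta')_i\,p(\xi')_j\,p(\eta')_k=0\qquad\text{for all tangential }\xi',\eta'
\]
implies $C(x_0)=0$. This algebraic injectivity is the actual content of the proposition.

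Your heuristic for this step is also off in two ways. First, there is no antisymmetric correction to the root. Since $\varphi^{ij}p_ip_j=0$ for every complex $p$, both $L$ and $L^\star$ have principal symbol $\gamma^{ij}p_ip_j$. In boundary normal coordinates for the metric of $\gamma$ one has exactly $p(\zeta)=\zeta+i|\zeta|_g\nu$. Second, independence of $|\xi'|,|\eta'|,|\xi'+\eta'|$ over polynomials does not suffice. Write $a=|\xi'|_g$, $b=|\eta'|_g$, $c=|\xi'+\eta'|_g$, and for tangential arguments set $A(z;x,y)=C(z;x,y)$, $B(x,y)=C(\nu;x,y)$, $D(z,y)=C(z;\nu,y)$, $E(y)=C(\nu;\nu,y)$, $F(z)=C(z;\nu,\nu)$, $G=C(\nu;\nu,\nu)$. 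The real part of the identity is then
\[
 A(-\xi'-\eta';\xi',\eta')-ab\,F(-\xi'-\eta')-c\bigl(b\,E(\xi')+a\,E(\eta')\bigr),
\]
and the imaginary part is
\[
 b\,D(-\xi'-\eta',\xi')+a\,D(-\xi'-\eta',\eta')+c\,B(\xi',\eta')-abc\,G .
\]
So you need independence of all eight products $1,a,b,c,ab,bc,ca,abc$ over $\mathbb{R}[\xi',\eta']$.

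Your route can be completed as follows.
\begin{enumerate}
\item For $n-1\geq2$, the positive definite quadratics $a^2,b^2,c^2$ are pairwise non-associate irreducibles in $\mathbb{R}[\xi',\eta']$. No product of a nonempty subset of them is a square, so the eight products are linearly independent over $\mathbb{R}(\xi',\eta')$. This is exactly where $n\geq3$ is used, which your sketch never identifies. For $n=2$, $|\xi'|_g$ is linear up to sign and the argument fails.
\item You must still pass from the vanishing coefficients to $C=0$. The identity $A(\xi'+\eta';\xi',\eta')\equiv0$ splits by bidegree into $A(\xi';\xi',\eta')\equiv0$. Polarization makes $A$ antisymmetric in its first two slots, and symmetry in the last two together with Lemma \ref{lmm:LAsym} gives $A=0$.
\item Similarly, $D(\xi'+\eta',\xi')\equiv0$ gives $D=0$, and $B,E,F,G$ vanish directly.
\end{enumerate}

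The paper reaches the same coefficient identities with a concrete one-parameter family. It takes $\xi'=u$, $\eta'=sv$ with nonparallel unit $u,v$. Strict convexity of $q(s)=|u+sv|_g$ kills the coefficients of $1$, $s$, $q(s)$, and then it flips $v\mapsto-v$. This avoids field theory but encodes the same independence.

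There are also minor slips. The leading order at the $r$-th jet is $\lambda^{2-r}$, not $\lambda^{3-n}\lambda^{-1}$. The boundary integrand carries the product of the three amplitudes and the factor $r!/\mu^{r+1}$ with $\mu=a+b+c>0$; this is harmless for localization.
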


The proof of Proposition \ref{prop:three-gradient-boundary} in Subsection \ref{subsec:three-gradient} uses boundary quasimodes to extract a pointwise tensor identity, followed by algebraic injectivity and induction on normal derivatives. Boundary determination requires only smoothness of $C$, while the global conclusion uses analyticity. Combined with Runge approximation, the proposition recovers the remaining gradient derivatives and closes the simultaneous induction. Analyticity in $\xi$ then reconstructs the full current at each fixed $s$ in Subsection \ref{subsec:nonlinear-completion}, completing the proof of Theorem \ref{thm:main}.

\medskip
\subsection*{Relation to previous work.} For an introduction to the classical Calder\'on problem, see the textbook \emph{The Calder\'on Problem: An Introduction} by Feldman, Salo, and Uhlmann \cite{feldman2025calderon}.

\textit{Linear and linearized inverse problems.} The linear anisotropic Calder\'on problem, with
\begin{equation*}
 \mathcal{J}^i(x,\xi)=\gamma^{ij}(x)\xi_j,
 \qquad \gamma^{ij}=\gamma^{ji},
\end{equation*}
provides the geometric background. Its DN map is invariant under coordinate changes that fix the boundary. In dimensions $n\geq3$, Lee and Uhlmann \cite{lee1989determining} proved recovery of real-analytic anisotropic conductivities, modulo this natural obstruction, when the manifold is simply connected and its boundary is geodesically convex. Lassas and Uhlmann \cite{lassas2001determining} subsequently reconstructed compact connected real-analytic manifolds without topological assumptions beyond connectedness. For complete connected real-analytic manifolds of dimension $n\geq3$ with compact Wiener-regular boundary, Lassas, Taylor, and Uhlmann \cite{lassas2003dirichletneumann} showed that equality of the local DN maps on a nonempty open subset $\Gamma$ of the boundary, where the boundary and metric are analytic up to $\Gamma$, determines the manifold up to isometry. These works established the role of analyticity in passing from boundary determination to global geometric recovery. The Poisson-embedding approach later gave a flexible formulation of this continuation mechanism for Calder\'on-type problems \cite{lassas2020poisson}.

The geometric recovery builds on the solution coordinates and Poisson embeddings of Lassas, Liimatainen, and Salo \cite{lassas2020poisson,LassasMatti2022TCpf} and C\^arstea, Liimatainen, Lin, and Trad \cite{carstea2026conformalpoisson}. For the conformal Laplacian on locally conformally real analytic manifolds, \cite{LassasMatti2022TCpf} recovers the metric up to its natural conformal and coordinate gauges. For analytic metrics and an independent potential, \cite{carstea2026conformalpoisson} gives recovery from local DN data up to a coupled conformal gauge. The constant solution and the coclosedness of the drift in Theorem \ref{thm:magnetic-recovery} remove these scalar and conformal gauges, so the metric and drift are recovered modulo a $\Gamma$-fixing diffeomorphism.

A conformally transversally anisotropic (CTA) metric has the product-conformal form $g=c(x)((dx^1)^2+g_0)$ on a containing product manifold. The adjective ``simple'' refers to the transverse factor, with strictly convex boundary and a unique smoothly varying geodesic between any two points. Dos Santos Ferreira, Kenig, Salo, and Uhlmann \cite{dossantosferreira2009limiting} recover the magnetic field and electric potential on a prescribed admissible manifold, a CTA geometry with simple transverse factor. Ceki\'c \cite{cekic2020yangmills} studies recovery of Yang--Mills connections on a prescribed Riemannian base, modulo bundle gauge. Our global argument uses spatial analyticity and the coclosed drift structure to recover the unknown metric as well. The magnetic boundary determination in these articles, which is local and does not require CTA geometry or the Yang--Mills condition, nevertheless supplies an essential step of our proof.

Hamilton, Herrera, Mueller, and Von Herrmann \cite{hamilton2012complex} studied reconstruction of the complex admittivity $\sigma+i\omega\epsilon$ in dimension $ 2 $ for time-harmonic fields in the quasistatic regime, as a model for alternating-current measurements. 
In this article, we retain the steady-current setting and extend it to include Hall effects in a static magnetic field, allowing the real first-derivative matrix $\partial_\xi\mathcal{J}$ to be nonsymmetric.

\textit{Nonlinear inverse problems.} For nonlinear elliptic equations, repeated multilinear differentiation at a fixed background solution makes the nonlinearity itself a source of additional information. Early results include global uniqueness for a semilinear elliptic inverse problem by Isakov and Sylvester \cite{isakov1994global}, and the work of Sun and Uhlmann \cite{sun1997inverse} on anisotropic quasilinear media $\mathcal{J}^i(x,s,\xi)=\gamma^{ij}(x,s)\xi_j$, which transferred the then-known linear uniqueness results to potential-dependent conductivities. The higher-order linearization method grew out of the nonlinear wave-interaction framework of Kurylev, Lassas, and Uhlmann \cite{kurylev2018inverse}. In the CTA setting, Feizmohammadi and Oksanen \cite{feizmohammadi2020inverse} studied
\begin{equation*}
 -\Delta_g u+V(x,u)=0,\qquad
 V(x,s)=\sum_{k\geq1}V_k(x)\frac{s^k}{k!},
\end{equation*}
combining higher-order linearization with complex geometrical optics solutions to recover analytic semilinearities under an admissibility condition on the transverse manifold, with $V_1$ smooth and $V_1,V_2$ known a priori. Independently, Lassas, Liimatainen, Lin, and Salo \cite{lassas2021inverse} considered
\begin{equation*}
 \Delta_g u+q(x)u^m=0,
\end{equation*}
obtaining simultaneous geometric and coefficient recovery in dimension two for $m\geq2$, and potential recovery on transversally anisotropic manifolds with arbitrary transverse geometry in dimensions $n\geq3$ for $m\geq3$.

For genuinely quasilinear conductivities with dependence on the potential and/or its gradient, Mu\~noz and Uhlmann \cite{munoz2020calderon} and, subsequently, Shankar \cite{shankar2020recovering} obtained recovery results for spatially homogeneous scalar current laws of the form
\begin{equation*}
 \mathcal{J}(x,s,\xi)=a(s,\xi)\xi,
\end{equation*}
under analytic or structural assumptions. Shankar also treated nonhomogeneous conductivities when the spatially dependent part is prescribed. C\^arstea and Feizmohammadi \cite{carstea2021inverse} treated anisotropic gradient nonlinearities with a prescribed Laplacian linear part, of the form
\begin{equation*}
 \mathcal{J}^i(x,s,\xi)=\delta^{ij} \xi_j+
 \sum_{k=2}^{N}\mathcal{J}^i_{k;j_1\cdots j_k}(x)
                  \xi_{j_1}\cdots\xi_{j_k}
 +\mathcal{R}^i(x,s,\xi),\qquad
 \mathcal{R}=O(|\xi|^{N+1}).
\end{equation*}
They recovered the displayed anisotropic coefficients using higher-order linearization and Gaussian quasimodes. C\^arstea, Feizmohammadi, Kian, Krupchyk, and Uhlmann \cite{carstea2021calderon} proved global uniqueness for the spatially dependent isotropic law
\begin{equation*}
 \mathcal{J}(x,s,\xi)=a(x,s,\xi)\xi
\end{equation*}
in dimensions $n\geq3$ under holomorphic or real-analytic dependence assumptions, using plane-concentrating complex geometrical optics solutions. In related work, C\^arstea and Feizmohammadi \cite{carstea2023density} established a density theorem for tensor products of at least three gradients of harmonic functions and applied it to a coupled quasilinear system. Proposition \ref{prop:three-gradient-boundary} gives an analogous boundary determination result, using solutions of the first linearized operator $L$ and its transpose in place of harmonic functions. 

Recently, Liimatainen and Wu \cite{liimatainen2024calderon} resolved the corresponding scalar conductivity problem in dimension two in a holomorphic small-data class, using Bukhgeim-type solutions and stationary phase and assuming that the two conductivities agree to infinite order on the boundary. Chen and Jin \cite{chen2026calderon} recovered all $(s,\xi)$-derivatives of the scalar factor $ a(x,s,\xi) $ at $\xi=0$ for $ \mathcal{J}^i(x,s,\xi)=a(x,s,\xi)\sqrt{\det g(x)}\,g^{ij}(x)\xi_j $, where $g$ is a fixed simple CTA metric.

All of the conductivity models just discussed have a symmetric derivative $\partial_\xi\mathcal{J}(x,s,0)$ at constant backgrounds. Theorem \ref{thm:main} treats current laws satisfying \eqref{condition:regularity} in the stated translation-covariant class, without prescribing an isotropic factorization. It recovers the current law at all constant backgrounds modulo two unavoidable obstructions: an analytic change of coordinates fixing $\Gamma$ and the Hall-effect-type current invisible on $\Gamma$ in \eqref{form:invisible}--\eqref{condition:invisible}.

\medskip
\noindent\textbf{Acknowledgements.} The author thanks Tony Liimatainen and Mikko Salo for helpful discussions on the analytic continuation arguments needed to pass from boundary determination to recovery in the interior. The author also thanks Xi Chen and Gunther Uhlmann for discussions on quasilinear Calderón problems, during which Uhlmann posed an early version of the problem studied in this article. The author was supported in part by CSC grant 202506100141 and NSFC No. 12571451.

\medskip
\noindent\textbf{AI disclaimer.} ChatGPT (OpenAI) and Gemini (Google) assisted with proof adaptation, example construction, and manuscript revision.

\section{Preliminaries}\label{sec:preliminaries}

We work on a connected compact oriented real analytic manifold $M$ of dimension $n\geq3$, with nonempty real analytic boundary.

\subsection{Forward problems}
We first justify the local solution map and its differentiability by applying the implicit function theorem for Banach spaces. In the function-space statements below, we divide the density-valued equation by a fixed positive analytic density. We write $M^\circ$ for the interior, $C^{2,\alpha}_0(M)$ for functions with zero Dirichlet trace, and $L^\star$ for the formal adjoint with respect to this density.

\begin{lemma}[Local solutions and smooth dependence]\label{lmm:forward}
Let $M$ be a connected compact real analytic manifold with nonempty smooth boundary, let $0<\alpha<1$, and let a $C^\infty$ current $\mathcal{J}$ admit a solution $u^{f_0}\in C^{2,\alpha}(M)$ of \eqref{eq:quasi_gen}. Assume uniform ellipticity at $u^{f_0}$ and that the first-order linearized zero-Dirichlet operator
\begin{equation*}
    \begin{aligned}
        L_0:C^{2,\alpha}_0(M)\longrightarrow C^\alpha(M),&\qquad
        L_0w=\rho^{-1}\partial_i\left(a^{ij}\partial_jw+b^iw\right),\\
        a^{ij}:=\partial_{\xi_j}\mathcal{J}^i(x,u^{f_0},du^{f_0}),&\qquad
        b^i:=\partial_s\mathcal{J}^i(x,u^{f_0},du^{f_0}),
    \end{aligned}
\end{equation*}
is an isomorphism, where $\rho$ is some positive density function in local coordinates. This isomorphism condition follows in particular if $u^{f_0}+t$ solves the equation for every sufficiently small real $t$.

There are neighborhoods $U$ of $f_0$ in $C^{2,\alpha}(\partial M)$ and $V$ of $u^{f_0}$ in $C^{2,\alpha}(M)$ such that each $f\in U$ has exactly one solution $S(f)=u^f\in V$. Both
\begin{equation*}
 S:U\longrightarrow C^{2,\alpha}(M),\qquad
 N_{\mathcal{J}}:U\longrightarrow
 C^{1,\alpha}(\partial M;\Lambda^{n-1}T^*\partial M)
\end{equation*}
are $C^\infty$. For every finite $K$, their $K$-th derivatives are continuous symmetric multilinear maps on $(C^{2,\alpha}(\partial M))^K$. At constant backgrounds, solution derivatives in smooth boundary directions are smooth up to the boundary.
\end{lemma}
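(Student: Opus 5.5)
The proof is a standard implicit function theorem argument on a Dirichlet-lifted map. I would fix a bounded linear extension operator $E:C^{2,\alpha}(\partial M)\to C^{2,\alpha}(M)$, for instance harmonic extension with respect to a fixed analytic background metric, so that $Ef|_{\partial M}=f$. Then define
\begin{equation*}
 F:C^{2,\alpha}(\partial M)\times C^{2,\alpha}_0(M)\to C^{\alpha}(M),\qquad
 F(f,v):=\rho^{-1}\partial_i\bigl(\mathcal{J}^i(x,Ef+v,d(Ef+v))\bigr).
\end{equation*}
Solutions of \eqref{eq:quasi_gen} with trace $f$ correspond exactly to zeros $v$ of $F(f,\cdot)$, via $u=Ef+v$.

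\textbf{Smoothness of $F$.} The first step is to show that $F$ is $C^\infty$ between these Banach spaces. Expanding the divergence gives
\begin{equation*}
 (\partial_i\mathcal{J}^i)(x,u,du)+(\partial_s\mathcal{J}^i)\,\partial_iu+(\partial_{\xi_j}\mathcal{J}^i)\,\partial_i\partial_ju .
\end{equation*}
Each term is a product of a $C^\alpha$ function with a Nemytskii (superposition) operator $u\mapsto G(x,u,du)$ for smooth $G$. Such operators are $C^\infty$ from $C^{1,\alpha}$ to $C^{\alpha}$, since their $K$-th derivative is given by the multilinear expression $\partial^KG(x,u,du)[\cdot,\dots,\cdot]$. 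I expect this to be the main technical point. Its proof is the standard Taylor-remainder estimate in Hölder norms on bounded sets, using $\mathcal{J}\in C^\infty$.

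\textbf{Solution map.} The partial derivative $D_vF(f_0,v_0)$ at $v_0=u^{f_0}-Ef_0$ is exactly $L_0$, which is assumed to be an isomorphism. The Banach-space implicit function theorem then gives neighborhoods $U$ and $V'$ and a unique $C^\infty$ map $f\mapsto v(f)$. Setting $S(f)=Ef+v(f)$, uniqueness in $V=Ef_0+\ldots$ follows after shrinking $U$ so that $Ef$ stays close to $Ef_0$.

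\textbf{DN map.} $N_{\mathcal J}(f)=\iota^*(\mathcal{J}(x,S(f),dS(f))\lrcorner dx)$ is the composition of $S$, a smooth Nemytskii map $C^{2,\alpha}\to C^{1,\alpha}$, and the bounded trace map. Hence it is $C^\infty$. Derivatives of $C^\infty$ maps between Banach spaces are continuous symmetric multilinear maps.

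\textbf{The sufficient condition.} If $u^{f_0}+t$ solves the equation for all small $t$, differentiating in $t$ gives $\partial_i b^i+\partial_i(a^{ij}\cdot 0)=0$, i.e.\ $L1=0$ with $L=\partial_i(a^{ij}\partial_j\cdot+b^i\cdot)$. The formal adjoint is $L^\star v=\partial_j(a^{ij}\partial_iv)-b^i\partial_iv$. It has no zeroth-order term and is uniformly elliptic, so the strong maximum principle gives injectivity of $L^\star$ on $C^{2,\alpha}_0$. The zero-Dirichlet operator $L_0$ is Fredholm of index zero (Schauder theory plus a homotopy to the Laplacian), and its cokernel is identified with $\ker L^\star_0$, which we have just shown is trivial. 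Therefore $L_0$ is an isomorphism.

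\textbf{Regularity at constant backgrounds.} At a constant background the coefficients of each derivative equation are smooth. The $K$-th derivative $D^KS[h_1,\dots,h_K]$ solves $L_0w=$ (a polynomial in lower derivatives and their gradients) with smooth boundary data. By induction on $K$ and Schauder regularity in $C^{m,\alpha}$ for every $m$, these derivatives are smooth up to the boundary.
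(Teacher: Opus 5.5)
Your proof is correct. Its implicit-function part is the paper's argument up to bookkeeping. You lift the trace by a fixed extension $E$ and solve for $v\in C^{2,\alpha}_0(M)$. The paper instead keeps $u\in C^{2,\alpha}(M)$, puts the trace condition into the first component of $F(f,u)=(f-u|_{\partial M},\rho^{-1}\partial_i\mathscr{C}^i(u))$, and inverts $D_uF$ explicitly through $E$ and $L_0^{-1}$. Likewise, you expand the divergence and use Nemytskii maps $C^{1,\alpha}\to C^\alpha$, while the paper treats $u\mapsto\mathcal{J}(x,u,du)$ as a smooth map $C^{2,\alpha}\to C^{1,\alpha}$ followed by the bounded divergence.

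The genuine difference is in the sufficient condition for invertibility. The paper uses $L1=\rho^{-1}\partial_ib^i=0$ to see that $L_0$ itself, in nondivergence form, has vanishing zeroth-order coefficient. It then applies the maximum principle and \cite[Theorem~6.14]{gilbarg2001elliptic} directly to $L_0$. This is elementary, but it needs the sign condition $\partial_ib^i\le0$, which $L1=0$ supplies.

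You instead apply the maximum principle to the transpose and conclude by index zero and duality. Notice that the identity $L1=0$ you derive is never used. The transpose $\partial_j(a^{ij}\partial_iv)-b^i\partial_iv$ of a full-divergence operator has no zeroth-order term whatever $b$ is. So your route actually shows that $L_0$ is an isomorphism at every uniformly elliptic $C^{2,\alpha}$ background, with no translation hypothesis.

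The price is the Fredholm machinery, and the cokernel identification is best made precise in the weak setting. A functional annihilating $L_0(C^{2,\alpha}_0)$ is a priori only a distribution, so injectivity of $L^\star$ on $C^{2,\alpha}_0$ does not directly apply to it. Instead, the weak maximum principle \cite[Theorem~8.1]{gilbarg2001elliptic} holds for the transpose, whose coefficients satisfy the required sign condition trivially. Combined with the $W^{1,2}_0$ Fredholm alternative \cite[Theorem~8.6]{gilbarg2001elliptic}, this gives $\ker L_0=0$. Index zero on the Schauder spaces then gives surjectivity.
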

\begin{proof}
We use the function-space framework of \cite[Theorem B.1]{carstea2021calderon}, applied componentwise to the current. Set $X=C^{2,\alpha}(\partial M)$, $Y=C^{2,\alpha}(M)$, and $Z=X\times C^\alpha(M)$, and define
\begin{equation*}
 \mathscr{C}(u):=\mathcal{J}(x,u,du),\qquad
 F(f,u):=\left(f-u|_{\partial M},\rho^{-1}\partial_i\mathscr{C}^i(u)\right).
\end{equation*}
The chain rule gives for every $r\geq1$
\begin{equation*}
 D^r\mathscr{C}(u)[v_1,\ldots,v_r]
 =\partial_{(s,\xi)}^r\mathcal{J}(x,u,du)
       [(v_1,dv_1),\ldots,(v_r,dv_r)].
\end{equation*}
On a sufficiently small neighborhood $V$ of $u^{f_0}$ in $Y$, the evaluated fiber variables stay in a fixed compact set. Boundedness of $u\mapsto(u,du)$ into $C^{1,\alpha}$ and the H\"older product estimates give
\begin{equation*}
 \|D^r\mathscr{C}(u)[v_1,\ldots,v_r]\|_{C^{1,\alpha}}
 \leq C_{r,V}\prod_{j=1}^r\|v_j\|_{C^{2,\alpha}},
 \qquad u\in V.
\end{equation*}
Real-variable Taylor formulae and the same estimates control the remainders and continuity of the derivatives, using only finitely many derivatives of the smooth current at each order. A finite atlas therefore gives a $C^\infty$ map $F$ near $(f_0,u^{f_0})$.

Let $Lw=\rho^{-1}\partial_i(a^{ij}\partial_jw+b^iw)$ on $Y$, and choose a bounded extension operator $E:X\to Y$ as a right inverse of the Dirichlet trace. The derivative and its inverse are
\begin{equation*}
 \begin{aligned}
 D_uF(f_0,u^{f_0})w&=(-w|_{\partial M},Lw),\\
 \bigl(D_uF(f_0,u^{f_0})\bigr)^{-1}(h,G)
 &=-Eh+L_0^{-1}(G+L(Eh)).
 \end{aligned}
\end{equation*}
The Banach implicit function theorem \cite[Theorem 15.1 and Corollary 15.1]{deimling1985nonlinear} gives the unique local solution map $S\in C^\infty(U;Y)$. Its $C^k$ version gives $S\in C^k$ whenever $F\in C^k$, for finite $k\geq1$.

Since $N_{\mathcal{J}}(f)=\nu_i\mathscr{C}^i(S(f))|_{\partial M}$, composition and the trace map give the same parameter regularity for the DN map. For each fixed $K$, locally in $f$,
\begin{equation*}
 \|D^KS(f)[h_1,\ldots,h_K]\|_{C^{2,\alpha}}
 \leq C_K\prod_{j=1}^K\|h_j\|_{C^{2,\alpha}}.
\end{equation*}
Here $C_K$ may depend on the background neighborhood, ellipticity and coefficient norms. No uniformity over all real backgrounds is asserted. Differentiating $S(f)|_{\partial M}=f$ gives the prescribed trace at first order and zero trace at higher orders. At a constant background, successive variations solve the same elliptic equation with smooth coefficients and sources formed from lower variations. Schauder regularity and induction give spatial smoothness for smooth boundary directions.

In particular, \eqref{condition:elliptic} is sufficient for invertibility without presupposing differentiability of a solution map. If $u^{f_0}+t$ is a solution for small $t$, direct differentiation of this explicitly given affine family gives
\begin{equation*}
 L1=\rho^{-1}\partial_i b^i=0.
\end{equation*}
Since $a,b\in C^{1,\alpha}$, expansion of $L_0$ has $C^\alpha$ coefficients, uniformly positive definite symmetric principal part, and vanishing zeroth-order coefficient. The maximum principle gives uniqueness, and the classical Dirichlet theory gives existence and the Schauder bound. See \cite[Theorem 6.14]{gilbarg2001elliptic} in a $ C^{2,\alpha}  $ domain. On $M$ the same conclusion follows from the local boundary and interior Schauder estimates, the maximum principle, and the method of continuity from an auxiliary Laplace operator. This proves the asserted isomorphism.
\end{proof}


\begin{definition}[Admissible backgrounds]\label{def:admissible-background}
For a current $\mathcal{J}$, a boundary value $f_0$ together with a distinguished solution $u^{f_0}$ is an \textit{admissible background} if it satisfies the hypotheses of Lemma \ref{lmm:forward}. The associated solution map is the unique map furnished by that lemma. Under the hypotheses of Theorem \ref{thm:main}, every constant background $f_0\equiv s$, $u^{f_0}\equiv s$ is admissible.
\end{definition}

\subsection{Runge approximation and Poisson embeddings}
\label{subsec:poisson-tools}

We use the partial-boundary Runge theorem for general second-order elliptic operators in \cite[Proposition~A.2]{lassas2020poisson}. Its finite-dimensional jet argument, used for harmonic functions in \cite[Proposition~A.5]{lassas2020poisson}, gives the following analytic one-jet version. A \textit{one-jet} at $x$ is the pair $(w(x),dw(x))$. All operators act on functions. A density-valued equation may first be divided by a fixed positive density. Appendix \ref{app:poisson-continuation} records the adaptations needed for the real elliptic operators used here.

The Runge approximation theorem for one-jets is a key tool in the Poisson-embedding approach to Calder\'on-type problems. It allows us to approximate arbitrary values and first derivatives of solutions at interior points by solutions with boundary data supported in an open subset of the boundary.

\begin{lemma}[Runge approximation for one-jets]\label{lmm:Runge-jets}
Let $M$ be a compact connected real analytic manifold of dimension $n\geq3$ with nonempty real analytic boundary. Let $L$ be a real second-order uniformly elliptic operator with real analytic coefficients on $M$, and suppose its zero-Dirichlet realization is invertible. Let $\Gamma\subset\partial M$ be nonempty and open. At any finite collection of distinct interior points, the solutions $Lw=0$ with real boundary data in $C_c^\infty(\Gamma)$ realize arbitrary values and first derivatives. In particular, their differentials span $T_x^*M$ at every $x\in M^\circ$.
\end{lemma}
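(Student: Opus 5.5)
The plan is to adapt the partial-boundary Runge theorem \cite[Proposition~A.2]{lassas2020poisson} together with the finite-dimensional jet argument of \cite[Proposition~A.5]{lassas2020poisson}. Fix distinct interior points $x_1,\ldots,x_N\in M^\circ$. Consider the finite-dimensional linear map
\begin{equation*}
 T:\ \{w:Lw=0,\ w|_{\partial M}\in C_c^\infty(\Gamma)\}\longrightarrow \bigoplus_{m=1}^N\bigl(\mathbb{R}\oplus T^*_{x_m}M\bigr),\qquad w\mapsto (w(x_m),dw(x_m))_{m}.
\end{equation*}
Since the target is finite dimensional, surjectivity of $T$ is equivalent to the triviality of its annihilator. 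Take an element $(c_m,\eta_m)_m$ of the dual that annihilates the range. Set
\begin{equation*}
 F:=\sum_m\bigl(c_m\delta_{x_m}-\eta_m^i\partial_i\delta_{x_m}\bigr),
\end{equation*}
which is a distribution supported at the points, with the densities normalized by the fixed analytic density. Let $v$ be the solution of $L^\star v=F$ in $M^\circ$ with $v|_{\partial M}=0$. This is available because the zero-Dirichlet realization of $L$, and hence of $L^\star$, is invertible. Here $v$ is a Green's-function combination and is smooth away from $\{x_m\}$.

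Green's formula, applied to $w$ and $v$ on $M$ with small balls around the $x_m$ removed and then letting the radii shrink, gives
\begin{equation*}
 0=\langle F,w\rangle=\int_{\partial M} h\,\partial_{\nu_L}v,
\end{equation*}
where $h=w|_{\partial M}$ and $\partial_{\nu_L}$ is the conormal derivative associated with $L^\star$. The boundary data $h\in C_c^\infty(\Gamma)$ are arbitrary, because each such $h$ has a unique solution by invertibility. Hence $\partial_{\nu_L}v=0$ on $\Gamma$. Since also $v=0$ on $\Gamma$, the function $v$ has vanishing Cauchy data there.

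The operator $L^\star$ has real analytic coefficients up to the analytic boundary. Unique continuation for the Cauchy problem (Holmgren, or simply analytic continuation, since $v$ is analytic up to $\Gamma$ by analytic elliptic regularity) therefore gives $v\equiv0$ on the connected set $M\setminus\{x_1,\ldots,x_N\}$. The set remains connected because $n\geq3$, or indeed $n\geq2$. Thus $F=L^\star v$ is supported at the points but equals zero as a distribution.

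The distributions $\delta_{x_m}$ and $\partial_i\delta_{x_m}$ at distinct points are linearly independent, so all $c_m$ and $\eta_m$ vanish. This proves surjectivity of $T$. The spanning statement for $T_x^*M$ follows by taking $N=1$.

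I expect the main obstacle to be the adaptation noted in Appendix \ref{app:poisson-continuation}. The operators here have nonsymmetric $D^{ij}$ and a drift, and $L^\star$ is the formal transpose rather than $L$ itself. One must check three things. First, the Green identity produces the correct conormal derivative for the nonsymmetric principal part. Second, the Dirichlet problem for $L^\star$ is solvable, which holds by the Fredholm alternative because $L$ is invertible. Third, analyticity of $v$ up to $\Gamma$ holds, so that vanishing Cauchy data on $\Gamma$ propagates across the boundary.
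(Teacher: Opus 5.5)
Your proof is correct, but it follows a different route from the paper's. The paper uses the partial-boundary Runge theorem \cite[Proposition~A.2]{lassas2020poisson} as a black box. It works on small disjoint coordinate balls with connected complement, upgrades the approximation to $C^1$ on smaller balls by interior estimates, and produces local solutions with arbitrary one-jets at each point by Cauchy--Kowalevski. It then concludes because a dense linear subspace of the finite-dimensional jet space is the whole space.

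You instead run the duality and unique-continuation argument that underlies the Runge theorem directly, with a functional supported at the points themselves. Since the complement of finitely many points is connected, unique continuation kills $v$ everywhere off the points, and $F=0$ follows at once. This removes the local solvability step and any discussion of approximation topologies. It also uses analyticity only for unique continuation from $\Gamma$, so it would survive under weaker coefficient regularity.

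The price is that you must handle the singular adjoint solution, and three points deserve a line each in a write-up.
\begin{enumerate}
\item Define $v$ by transposition, $\int_M v\phi\,dx=\langle F,L_0^{-1}\phi\rangle$. This gives $v\in L^p$ for $p<n/(n-1)$, smooth up to $\partial M$ away from the points. Obtain the boundary identity with a cutoff $\chi$ equal to one near the points and supported in $M^\circ$: $\langle F,w\rangle=\int_M v\,L(\chi w)\,dx=-\int_M v\,L((1-\chi)w)\,dx$, followed by Green's formula where $v$ is smooth. This avoids the shrinking-ball limit, which would require the local asymptotics of the Green function of a nonsymmetric variable-coefficient operator.
\item Vanishing of the conormal derivative $\nu_jD^{ij}\partial_iv$, together with $v|_{\partial M}=0$, gives $dv=0$ on $\Gamma$ because the principal symbol is definite, so $D^{ij}\nu_i\nu_j\neq0$.
\item $v=0$ off the points implies $v=0$ as a distribution because $v\in L^1$. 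Alternatively, $L^\star$ applied to a nonzero point-supported distribution has order at least two, whereas $F$ has order at most one.
\end{enumerate}
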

The reduction to the cited Runge theorem is given in Appendix \ref{app:runge-jets}.

\begin{lemma}[Local analytic boundary regularity]
\label{lmm:analytic-boundary}
Let $\Gamma\subset\partial M$ be relatively open, and suppose that $\partial M$ is real analytic near each point of $\Gamma$. Let $L$ be a second-order elliptic operator with real analytic coefficients and definite real principal symbol in a neighborhood of $\Gamma$. A solution of $Lw=0$ that is smooth up to $\Gamma$ and has an analytic Dirichlet trace on $\Gamma$ extends real analytically across the boundary near each point of $\Gamma$.
\end{lemma}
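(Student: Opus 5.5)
This is a local statement, so I will prove it by flattening the boundary near a point of $\Gamma$ with analytic coordinates and then reducing to interior elliptic regularity by reflection. Fix $x_0\in\Gamma$. Since $\partial M$ is real analytic near $x_0$, there are real analytic boundary normal coordinates $(x',x^n)$, $x^n\geq0$, in which a neighborhood of $x_0$ in $M$ is the half-ball $B^+=\{|x|<r,\ x^n\geq0\}$ and $\Gamma$ corresponds to $\{x^n=0\}$. The coefficients of $L$ remain real analytic in these coordinates, and the principal symbol remains definite. Dividing by the leading coefficient of $\partial_n^2$, which is nonzero by definiteness, I write
\begin{equation*}
 \partial_n^2w=-\sum_{(i,j)\neq(n,n)}a^{ij}\partial_i\partial_jw-b^i\partial_iw-cw,
\end{equation*}
with analytic $a,b,c$ extending to a full ball around $0$.

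Next I remove the Dirichlet trace. The trace $f=w|_{x^n=0}$ is analytic in $x'$. Choose an analytic $F$ near $0$ with $F|_{x^n=0}=f$, for instance $F(x',x^n)=f(x')$. Then $v=w-F$ solves $Lv=G:=-LF$, where $G$ is analytic up to and across $x^n=0$, and $v$ has zero trace on $\{x^n=0\}$ and is smooth up to that boundary.

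The core step is analytic regularity up to the boundary for the Dirichlet problem with analytic data; this is the main obstacle. I would cite the classical theorem of Morrey and Nirenberg on analyticity at the boundary for elliptic equations with analytic coefficients, analytic boundary and analytic Dirichlet data. That theorem shows that $v$ is real analytic on $B^+_{r'}$ up to $\{x^n=0\}$, meaning its Taylor series at boundary points converge. If a self-contained argument were wanted, I would instead prove Cauchy-type estimates
\begin{equation*}
 \sup_{B^+_{\rho}}|\partial^\beta v|\leq C^{|\beta|+1}\beta!
\end{equation*}
by induction on $|\beta|$. Tangential derivatives $\partial_{x'}^{\beta'}v$ still vanish on the boundary and solve differentiated equations, so nested-domain Schauder or $H^k$ estimates control them. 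Normal derivatives are then recovered from the displayed equation solved for $\partial_n^2$, using the standard bookkeeping of majorants. The difficulty is keeping factorial growth under control through the nested-domain argument, and this is exactly the content of Morrey and Nirenberg.

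Finally I extend across the boundary. Since $v$ is analytic up to the boundary, its Taylor expansion at each point of $\{x^n=0\}$ converges on a full ball, so $v$ extends analytically across $\Gamma$ near $x_0$. An alternative for this step: $v$ is smooth up to the boundary, so the full jet $\partial_n^kv|_{x^n=0}$ is determined recursively from $v|_{x^n=0}=0$, $\partial_nv|_{x^n=0}$ and the equation. Cauchy–Kovalevskaya, applied with analytic data, then produces an analytic solution whose jet matches, and the extension is this analytic function. Setting $w=v+F$ gives the analytic extension of $w$. Covering $\Gamma$ by such neighborhoods and using uniqueness of analytic continuation on overlaps proves the lemma near each point of $\Gamma$.
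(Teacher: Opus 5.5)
Your proposal matches the paper's argument: flatten the boundary analytically, subtract an analytic extension of the Dirichlet trace, and invoke Morrey's (Morrey--Nirenberg) analytic boundary regularity for the Dirichlet problem, with definiteness of the principal symbol supplying the noncharacteristic and complementing conditions. One small caveat: the Cauchy--Kovalevskaya variant in your last paragraph needs the Neumann trace $\partial_n v|_{x^n=0}$ to be analytic, which is known only after the Morrey step, so it is a redundant restatement of that step rather than a replacement for it.
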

This is the local analytic Dirichlet regularity theorem of Morrey \cite[Theorem~5.7.1$'$]{morrey1966multiple}, applied after analytically flattening the boundary and subtracting an analytic extension of the trace. The definite principal symbol makes the Dirichlet boundary noncharacteristic and satisfies the elliptic complementing condition.

In anisotropic Calder\'on problems, one must identify the unknown geometry up to a boundary-fixing diffeomorphism. Embedding the manifold into a space of boundary functionals provides a way to construct this identification from solutions. This is the reason why we introduce the solution coordinates (or $ Z $-coordinates) for local determination and the Poisson embedding for global identification.

The Poisson embedding below is the full-boundary, fixed-Sobolev-space version of \cite[Definition~2.1]{lassas2020poisson}. Solution coordinates are used in \cite[Lemma~2.4]{lassas2020poisson}, and the boundary construction in Lemma \ref{lmm:formal-to-collar} follows the $Z$-coordinates of \cite[Propositions~3.2--3.3]{LassasMatti2022TCpf}. For a recent account of quotient solution coordinates for $\Delta_g+q$, see C\^arstea, Liimatainen, Lin, and Trad \cite[Section~3.4]{carstea2026conformalpoisson}.
\begin{definition}[Poisson embeddings and solution coordinates]
\label{def:poisson-embedding}
Let $M$ and $L$ satisfy the hypotheses of Lemma \ref{lmm:Runge-jets}, assume $L1=0$, and fix $r>(n-1)/2+2$. Write $w^h$ for the solution of $Lw^h=0$ with $w^h|_{\partial M}=h$, initially for $h\in C^\infty(\partial M)$. Regard $H^{-r}(\partial M)$ as the continuous dual of $H^r(\partial M)$, with Sobolev norms defined using any fixed smooth auxiliary metric. The Poisson embedding associates to an interior point the functional that evaluates all Dirichlet solutions there. Its extension to boundary points, where evaluation becomes the Dirac functional $\delta_x$, is the \textit{compactified Poisson embedding}:
\begin{equation}\label{eq:compactified-poisson-map}
 \overline{\mathscr{P}}_L:M\longrightarrow
 H^{-r}(\partial M),\qquad
 \langle\overline{\mathscr{P}}_L(x),h\rangle= w^h(x).
\end{equation}

The restriction $\mathscr{P}_L:=\overline{\mathscr{P}}_L|_{M^\circ}$ is called the \textit{Poisson embedding}. The embedding properties of both maps are established in Lemma \ref{lmm:compactified-poisson}.

For local data, put $E_\Gamma=\overline{C_c^\infty(\Gamma)}^{H^r(\partial M)}$ and define $\mathscr{P}_L^\Gamma(x)=\overline{\mathscr{P}}_L(x)|_{E_\Gamma}\in E_\Gamma^*$. Its restriction to $M^\circ\cup\Gamma$ is the local Poisson embedding. Its continuous extension to $M$ vanishes on $\partial M\setminus\Gamma$.

A \textit{solution coordinate map} is a map $Z=(w^{h_1},\ldots,w^{h_n})$ whose component differentials are linearly independent at the point under consideration. In the ratio construction of \cite[Proposition~3.2]{LassasMatti2022TCpf}, the coordinates have the form $w^{h_j}/w^1$. Here $L1=0$ and Dirichlet uniqueness give $w^1\equiv1$, so the denominator disappears. The inverse function theorem then gives a local coordinate chart. The boundary construction and its analytic regularity are verified for our drift operator below.
\end{definition}

The next lemma adapts \cite[Propositions~2.1--2.3 and Appendix~B]{lassas2020poisson}. We use $L1=0$ for boundary separation and include the uniform estimates that give analyticity in one fixed Sobolev dual space. A brief proof is given in Appendix \ref{app:poisson-estimates}.

\begin{lemma}[Compactified Poisson embedding]\label{lmm:compactified-poisson}
Let $L$ satisfy the hypotheses of Lemma \ref{lmm:Runge-jets} and $L1=0$, and let $r>(n-1)/2+2$. The map \eqref{eq:compactified-poisson-map} is continuous, injective on $M$, real analytic on $M^\circ$, and an immersion there. Thus $\overline{\mathscr{P}}_L$ is a topological embedding of $M$, and its restriction $\mathscr{P}_L$ is a real analytic embedding of $M^\circ$. The local map $\mathscr{P}_L^\Gamma$ is a topological embedding of $M^\circ\cup\Gamma$ and a real analytic immersion on $M^\circ$.
\end{lemma}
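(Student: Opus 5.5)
The plan follows \cite[Propositions~2.1--2.3 and Appendix~B]{lassas2020poisson}, using $L1=0$ at each place where their argument needs a nonvanishing solution. The proof splits into five properties: continuity, analyticity, immersion, injectivity, and the local version.

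\emph{Continuity on $M$.} Since $r>(n-1)/2+2$, Sobolev embedding and elliptic regularity give that the solution operator $h\mapsto w^h$ is bounded from $H^r(\partial M)$ into $H^{r+1/2}(M)\subset C^{2}(M)$. Hence $x\mapsto(h\mapsto w^h(x))$ is continuous into $H^{-r}(\partial M)$. Indeed,
\begin{equation*}
 \|\overline{\mathscr P}_L(x)-\overline{\mathscr P}_L(y)\|_{H^{-r}}
 \le \sup_{\|h\|_{H^r}\le1}|w^h(x)-w^h(y)|
 \le C\,|x-y|.
\end{equation*}
At a boundary point $x$ we have $w^h(x)=h(x)$, so $\overline{\mathscr P}_L(x)=\delta_x$.

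\emph{Analyticity on $M^\circ$.} Interior analytic regularity for $L$, which has analytic coefficients, gives Cauchy-type estimates on a fixed complex neighbourhood of each compact $K\subset M^\circ$:
\begin{equation*}
 |\partial^\beta w^h(x)|\le C_K^{|\beta|+1}\beta!\,\|w^h\|_{L^2}\le C_K^{|\beta|+1}\beta!\,\|h\|_{H^r}.
\end{equation*}
These estimates are uniform in $\|h\|_{H^r}\le1$. The Taylor series of $x\mapsto\overline{\mathscr P}_L(x)$ therefore converges in the norm of $H^{-r}$, so the map is real analytic with values in this single Banach space.

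\emph{Immersion on $M^\circ$.} The differential at $x$ sends $v\mapsto(h\mapsto dw^h(x)v)$. Lemma \ref{lmm:Runge-jets} shows that the covectors $dw^h(x)$ span $T^*_xM$, so this differential is injective.

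\emph{Injectivity on $M$.} There are three cases, and the mixed case is the main obstacle.
\begin{itemize}
\item[(i)] Two distinct interior points are separated by Lemma \ref{lmm:Runge-jets}, which realizes prescribed values at both points simultaneously.
\item[(ii)] Two distinct boundary points give distinct Dirac masses.
\item[(iii)] Suppose $x\in M^\circ$, $y\in\partial M$, and $\overline{\mathscr P}_L(x)=\delta_y$. Then $w^h(x)=h(y)$ for all $h$. Taking $h$ smooth with support disjoint from $y$ gives $w^h(x)=0$. Since $L1=0$, adding constants shows this holds for every $h$ constant near $y$. A bump $h\ge0$ supported away from $y$ gives $w^h>0$ in $M^\circ$; this needs a maximum principle, which holds because the zeroth-order term vanishes after dividing by the density. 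This contradicts $w^h(x)=0$.
\end{itemize}

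A continuous injection of the compact space $M$ into a Hausdorff space is a topological embedding. An injective immersion that is a homeomorphism onto its image is an embedding, which gives the statement for $\mathscr P_L$ on $M^\circ$.

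\emph{Local version.} Restriction to $E_\Gamma$ preserves continuity, and Lemma \ref{lmm:Runge-jets} with data in $C_c^\infty(\Gamma)$ gives the immersion property and interior separation. Points of $\Gamma$ map to $\delta_y|_{E_\Gamma}$, which is nonzero and distinct for distinct $y\in\Gamma$. The mixed case repeats (iii) with bumps in $\Gamma$: use one bump supported away from $y$, positive in the interior, and one near $y$. Points of $\partial M\setminus\Gamma$ map to zero.

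It remains to show that $\mathscr P^\Gamma_L$ is a homeomorphism onto its image on the noncompact set $M^\circ\cup\Gamma$. Suppose $\mathscr P^\Gamma_L(x_k)\to\mathscr P^\Gamma_L(x)$. By compactness of $M$, any limit point $x'$ of $(x_k)$ satisfies $\mathscr P^\Gamma_L(x')=\mathscr P^\Gamma_L(x)\neq0$. Here $\mathscr P^\Gamma_L(x)\neq0$: for interior $x$ a positive bump in $\Gamma$ gives $w^h(x)>0$, and for $x\in\Gamma$ the restricted Dirac mass is nonzero. So $x'\notin\partial M\setminus\Gamma$, and injectivity gives $x'=x$. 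Hence $x_k\to x$, which is continuity of the inverse.
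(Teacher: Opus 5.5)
Your proposal is correct and follows essentially the same route as the paper: a uniform $C^2$ bound on $h\mapsto w^h$ for continuity, and Morrey's uniform analytic estimates for Taylor convergence in $H^{-r}(\partial M)$. The remaining steps also match: Lemma~\ref{lmm:Runge-jets} for interior separation and the immersion, the strong maximum principle with $L1=0$ to separate interior from boundary points, compactness for the topological embedding, and the same subsequence argument for $\mathscr{P}_L^\Gamma$, using that it vanishes exactly on $\partial M\setminus\Gamma$. The only differences are cosmetic: you get the $C^1$ bound from $H^{r+1/2}(M)$ Sobolev regularity rather than Schauder $C^{2,\alpha}$ estimates, and the remark about adding constants in case (iii) is not needed.
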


The analytic continuation argument of \cite[Theorem 3.3]{lassas2020poisson} uses separation by solutions, solution coordinates, and the analytic identity theorem. We record its local-data version for the real elliptic operators with $L1=0$ used here. The adaptation from harmonic functions uses Lemmas \ref{lmm:Runge-jets} and \ref{lmm:compactified-poisson}. It is not a direct application of the harmonic case of that theorem.

\begin{lemma}[Global extension via Poisson embeddings]\label{lmm:poisson-continuation}
Let $L_m$, $m=1,2$, satisfy the hypotheses of Lemma \ref{lmm:compactified-poisson}, and write $w_m^h$ for their Dirichlet solutions. Suppose that, for each $p\in\Gamma$, there are neighborhoods $V_{m,p}$ of $p$ in $M$ and a $\Gamma$-fixing real analytic diffeomorphism $F_p:V_{2,p}\to V_{1,p}$ such that
\begin{equation*}
 w_2^h=w_1^h\circ F_p\quad\text{on }V_{2,p}
 \quad\text{for every }h\in C_c^\infty(\Gamma).
\end{equation*}
Then these local identifications extend to a unique $\Gamma$-fixing real analytic diffeomorphism $\Phi:M^\circ\cup\Gamma\to M^\circ\cup\Gamma$ with the same solution identity. If the local identifications preserve orientation, then so does $\Phi$.
\end{lemma}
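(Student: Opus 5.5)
The plan is to follow the analytic continuation argument of \cite[Theorem 3.3]{lassas2020poisson}, with local Poisson embeddings in place of harmonic measures. Consider
\begin{equation*}
 \Phi_0:=(\mathscr{P}^\Gamma_{L_1})^{-1}\circ\mathscr{P}^\Gamma_{L_2}
\end{equation*}
on the set where the image of $\mathscr{P}^\Gamma_{L_2}$ lies in that of $\mathscr{P}^\Gamma_{L_1}$. The solution identity $w_2^h=w_1^h\circ\Phi$ says exactly that $\mathscr{P}^\Gamma_{L_1}\circ\Phi=\mathscr{P}^\Gamma_{L_2}$ on $E_\Gamma$, after density of $C_c^\infty(\Gamma)$ and continuity of evaluation in $H^{-r}$. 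So any such $\Phi$ is forced to equal $\Phi_0$, and uniqueness follows from injectivity of $\mathscr{P}^\Gamma_{L_1}$ (Lemma \ref{lmm:compactified-poisson}). The work is to show that $\Phi_0$ is defined on all of $M^\circ\cup\Gamma$ and is a real analytic diffeomorphism.

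\textbf{Step 1 (start near $\Gamma$).} Let $W\subset M^\circ\cup\Gamma$ be the set of points $x$ having a connected neighborhood $U_x$ and a real analytic open embedding $F:U_x\to M^\circ\cup\Gamma$ with $w_2^h=w_1^h\circ F$ for all $h$. The hypothesis gives $V_{2,p}\subset W$ for each $p\in\Gamma$, so $W$ is nonempty and open. On overlaps these local maps agree, because each equals $(\mathscr{P}^\Gamma_{L_1})^{-1}\circ\mathscr{P}^\Gamma_{L_2}$ by injectivity. They therefore glue to a single map $\Phi$ on $W$ that is analytic and satisfies the solution identity. By Lemma \ref{lmm:Runge-jets}, the differentials $dw_2^h$ span $T^*M$, so $D\Phi$ is invertible and $\Phi$ is a local diffeomorphism.

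\textbf{Step 2 (closedness in $M^\circ\cup\Gamma$).} Let $x_k\in W$ converge to $x\in M^\circ$; the case of a limit point on $\Gamma$ is already covered by Step 1. Pass to a subsequence with $\Phi(x_k)\to y\in M$, using compactness. By continuity of the compactified embeddings,
\begin{equation*}
 \overline{\mathscr{P}}_{L_1}(y)|_{E_\Gamma}=\mathscr{P}^\Gamma_{L_2}(x).
\end{equation*}
We must exclude two possibilities.
\begin{itemize}
 \item[(a)] $y\in\partial M\setminus\Gamma$. Then the left side vanishes. But $\mathscr{P}^\Gamma_{L_2}(x)\neq0$, since some solution with data in $C_c^\infty(\Gamma)$ is nonzero at the interior point $x$ by Lemma \ref{lmm:Runge-jets}.
 \item[(b)] $y\in\Gamma$. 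Then the left side is the Dirac functional $\delta_y$. Choose $h$ with support disjoint from $y$ and $w_2^h(x)\neq0$; this contradicts the displayed equality.
\end{itemize}
Hence $y\in M^\circ$. Now take solution coordinates $Z_1=(w_1^{h_1},\ldots,w_1^{h_n})$ near $y$, which exist by Lemma \ref{lmm:Runge-jets}. Since $\Phi$ is a local diffeomorphism, $dw_2^{h_j}=\Phi^*dw_1^{h_j}$ on $W$. Equality of the point evaluations of all $w^h$ and of their one-jets, passed to the limit along $x_k$, shows that $Z_2=(w_2^{h_1},\ldots,w_2^{h_n})$ is also a chart near $x$. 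Define $F:=Z_1^{-1}\circ Z_2$ near $x$.
\begin{itemize}
 \item $F$ agrees with $\Phi$ on $W$ near $x$.
 \item For every $h$, both $w_2^h$ and $w_1^h\circ F$ are real analytic (Lemma \ref{lmm:compactified-poisson}) and agree on an open subset of a connected neighborhood of $x$. By the identity theorem they agree on the whole neighborhood.
\end{itemize}
Thus $x\in W$. Connectedness of $M^\circ\cup\Gamma$ then gives $W=M^\circ\cup\Gamma$.

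\textbf{Step 3 (diffeomorphism and orientation).} The construction is symmetric, so running it with $L_1$ and $L_2$ exchanged produces $\Psi$ with $w_1^h=w_2^h\circ\Psi$. Injectivity of the local Poisson embeddings gives $\Psi\circ\Phi=\mathrm{id}$ and $\Phi\circ\Psi=\mathrm{id}$, so $\Phi$ is a bijective real analytic diffeomorphism. Near $\Gamma$, $\Phi$ equals $F_p$, so it fixes $\Gamma$. For orientation, $\det D\Phi$ is continuous and nonvanishing on the connected set $M^\circ\cup\Gamma$, and it is positive near $\Gamma$ when the $F_p$ preserve orientation; hence it is positive everywhere.

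\textbf{Main obstacle.} The delicate step is the limit argument in Step 2. It needs three things: the compactified embedding's continuity up to $\partial M$, the separation of boundary from interior points that uses $L1=0$, and nondegeneracy of the limiting one-jets so that $Z_2$ is a chart at $x$. Surjectivity of $\Phi$ is obtained through the symmetric inverse $\Psi$ rather than argued separately.
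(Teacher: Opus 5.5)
Your proposal follows the same route as the paper: a maximal continuation domain seeded by the maps $F_p$, a subsequence with $\Phi(x_k)\to y$, exclusion of boundary limits by separation, a solution chart $Z_1$ at $y$, the map $Z_1^{-1}\circ Z_2$, analytic continuation, and the reversed construction for the inverse and orientation. One step does not go through as written: the claim, made before $F$ is defined, that $Z_2$ is a chart at $x$ because the one-jets ``pass to the limit''. On $W$ you only know $dZ_2(x_k)=dZ_1(\Phi(x_k))\,D\Phi(x_k)$. Nothing controls $D\Phi(x_k)$ as $x_k\to x$ (it may degenerate or blow up), so this relation gives no information on $dZ_2(x)$. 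Likewise, the bullet that $F$ agrees with $\Phi$ on all of $W$ near $x$ is asserted without argument; the paper gets it from continuity of the inverse of the local Poisson embedding.

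The fix is the ordering the paper uses, in which, as it stresses, no invertibility of $Z_2$ is assumed in advance. Since $Z_2(x)=Z_1(y)$, the map $F=Z_1^{-1}\circ Z_2$ is defined and analytic on a small connected neighborhood $U$ of $x$ without any rank condition. For $k$ large, $x_k\in U$ and $\Phi(x_k)$ lies in the chart domain of $Z_1$. Continuity of $\Phi$ at $x_k$ and $Z_1\circ\Phi=Z_2$ then give $F=\Phi$ on an open set around $x_k$. The identity theorem yields $w_2^h=(w_1^h\circ Z_1^{-1})\circ Z_2$ on $U$ for every $h\in C_c^\infty(\Gamma)$. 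Hence each $dw_2^h(x)$ factors through $dZ_2(x)$, and Lemma \ref{lmm:Runge-jets} forces $\ker dZ_2(x)=0$ --- the same spanning argument you used in Step 1. Agreement of $F$ with $\Phi$ on all of $W\cap U$ then follows from injectivity of $\mathscr{P}^\Gamma_{L_1}$. If you prefer a limiting argument, you must pass to the limit in $w_2^h=(w_1^h\circ Z_1^{-1})\circ Z_2$, where $D\Phi$ has been eliminated, not in the raw one-jet identity. With this reordering the remainder of your proof is correct.
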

The proof, including control of the inverse coordinate homeomorphism at limit points, is given in Appendix \ref{app:poisson-globalization}.

Equality of the local nonlinear measurements for every $s$ determines the linearized measurements on $\mathcal{H}_\Gamma=\mathbb{R}+C_c^\infty(\Gamma)$, as used in Theorem \ref{thm:first-order}, since
\begin{equation*}
 \partial_s^a D_h^bN_{\mathcal{J}}^\Gamma(s,0)[h_1,\ldots,h_b]
 =\left.(D^{a+b}N_{\mathcal{J}})_s[\underbrace{1,\ldots,1}_{a},h_1,\ldots,h_b]\right|_\Gamma.
\end{equation*}
Here $a,b\in\mathbb{Z}_{\geq0}$, $a+b\geq1$, and $h_j\in C_c^\infty(\Gamma)$. The identity follows by first differentiating the boundary perturbations at each $s$ and then differentiating the resulting smooth identities in $s$.

\subsection{Magnetic operators and boundary jets}
\label{subsec:magnetic-tools}

For a smooth Riemannian metric $g$, a complex one-form $A$, and a complex function $q$, we use
\begin{equation*}
 d_g^\star:=(-1)^k\star_g^{-1}d\star_g
 \quad\text{on $k$-forms},\qquad -\Delta_g=d_g^\star d,
 \qquad d_A=d+iA\wedge,
\end{equation*}
where $\star_g$ is the \textit{Hodge star} and $d_g^\star$ the \textit{codifferential} (the formal adjoint of $d$). We write $\mathcal{P}_{g,A,q}=d_{\overline{A}}^\star d_A+q$ for the \textit{magnetic Schr\"odinger operator}, where the adjoint of $d_{\overline{A}}$ is taken in the usual Hermitian $L^2$ pairing. Our convention for complex magnetic potentials is that of \cite[Section~1]{dossantosferreira2009limiting}:
\begin{equation*}
 d_{\overline{A}}^\star d_Aw
 =-\frac{1}{\sqrt{\det g}}(\partial_j+iA_j)
 \left(\sqrt{\det g}\,g^{jk}(\partial_k+iA_k)w\right).
\end{equation*}
Thus
\begin{equation*}
 d_{\overline{A}}^\star d_Aw
 =-\Delta_gw-2iA^j\partial_jw
 +\left(i\,d_g^\star A+g^{jk}A_jA_k\right)w,
\end{equation*}
where $g^{jk}A_jA_k$ is complex bilinear. When the zero-Dirichlet realization is invertible, let $w^h$ solve $\mathcal{P}_{g,A,q}w^h=0$, $w^h|_{\partial M}=h$. The \textit{density-valued magnetic Schr\"odinger map} is
\begin{equation}\label{def:lin-Sch-DN}
 \Lambda_{g,A,q}h
 :=\iota^*((d_Aw^h)^\sharp\lrcorner d\operatorname{Vol}_g)
 =\nu_i\sqrt{\det g}\,g^{ij}(\partial_j+iA_j)w^h.
\end{equation}
Here $\sharp$ raises a covector index using $g$, and $\lrcorner$ denotes contraction with the volume form. The $\nu_i$ retain the form-valued convention of \eqref{def:normal-derivative}. Dividing this map by the boundary volume density gives the magnetic normal derivative used in the cited literature.

The following boundary determination lemma packages \cite[Theorem 8.4]{dossantosferreira2009limiting} with the corrected conformal normalization of \cite[Lemma 2.1(b)]{LassasMatti2022TCpf}. For a recent treatment of conformal boundary determination for $\Delta_g+q$, see C\^arstea, Liimatainen, Lin, and Trad \cite[Section~2.2]{carstea2026conformalpoisson}. The corresponding normalized unitary vector-bundle result is \cite[Theorem 3.4]{cekic2020yangmills}, whereas here we use the complex convention above, which includes purely imaginary $A$.

\begin{lemma}[Magnetic boundary jets]\label{lmm:magnetic-boundary-jets}
Let $M$ be a compact smooth oriented manifold of dimension $n\geq3$ with nonempty smooth boundary. Let $g_m$ be smooth Riemannian metrics on $M$, $A_m$ smooth purely imaginary one-forms, and $q_m$ smooth complex functions, $m=1,2$. Assume that the zero-Dirichlet realizations of $\mathcal{P}_m=\mathcal{P}_{g_m,A_m,q_m}$ are invertible, that the induced boundary metrics agree on $\Gamma$, and that $\Lambda_{g_1,A_1,q_1}h=\Lambda_{g_2,A_2,q_2}h$ on $\Gamma$ for every $h\in\mathcal{H}_\Gamma$. Let $w_m^h$ denote their Dirichlet solutions. Then at every point of $\Gamma$ there are a common smooth collar $\mathcal{C}=U\times[0,\varepsilon)$, with $U\subset\Gamma$ an open boundary neighborhood of the point, boundary-fixing collar maps $\chi_m:\mathcal{C}\to M$, and smooth positive functions $\kappa_0,\lambda_0$ on $\mathcal{C}$ such that, coefficientwise,
\begin{align}
 \kappa_0^{-1}\mathcal{P}_2^{\mathcal{C}}\kappa_0
 -\lambda_0\mathcal{P}_1^{\mathcal{C}}
 &=O(t^\infty),\label{eq:formal-operator-package}\\
 U_2^h-\kappa_0U_1^h&=O(t^\infty)
 \quad\text{for every }h\in\mathcal{H}_\Gamma,\label{eq:formal-solution-package}\\
 \kappa_0|_{t=0}=\lambda_0|_{t=0}&=1,\qquad
 d\lambda_0|_{t=0}=0.\label{eq:formal-boundary-normalization}
\end{align}
Here $U_m^h=w_m^h\circ\chi_m$, and $\mathcal{P}_m^{\mathcal{C}}$ denotes the expression pulled back by $\chi_m$: explicitly,
\begin{equation*}
    \mathcal{P}_m^{\mathcal{C}} v:=(\chi_m^*\mathcal{P}_m)v= [\mathcal{P}_m(v\circ\chi_m^{-1})]\circ\chi_m.
\end{equation*}
For a function, $O(t^\infty)$ means that its full Taylor jet vanishes along $t=0$. For an operator, this is understood coefficientwise.
\end{lemma}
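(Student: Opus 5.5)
The plan is to derive this packaging from the symbol calculus of the DN map, following \cite[Theorem 8.4]{dossantosferreira2009limiting} and \cite[Lemma 2.1(b)]{LassasMatti2022TCpf}. Fix $p\in\Gamma$ and choose a small boundary neighbourhood $U\subset\Gamma$ of $p$. For $m=1,2$, let $\chi_m:U\times[0,\varepsilon)\to M$ be the $g_m$-boundary normal coordinate map, given by the inward unit-speed geodesic flow. Since the induced boundary metrics agree on $\Gamma$, both collars restrict to the identity on $U$ and share the parameter domain $\mathcal{C}$. In these coordinates,
\begin{equation*}
 \mathcal{P}_m^{\mathcal{C}}=D_t^2+i E_m(x,t)D_t+Q_m(x,t,D_x),
\end{equation*}
where $Q_m$ is a second-order operator in $x$ with principal symbol $h_m(t)^{ab}\xi_a\xi_b$. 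Here $h_m(t)$ is the induced metric on the level sets, and the lower-order terms involve $A_m$ and $q_m$.

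\textbf{Step 1: factorization.} Following \cite{dossantosferreira2009limiting}, I would factor, modulo smoothing operators,
\begin{equation*}
 \mathcal{P}_m^{\mathcal{C}}=(D_t+iE_m-iB_m(t))(D_t+iB_m(t)),
\end{equation*}
where $B_m(t)$ is a first-order pseudodifferential family in $x$ with principal symbol $\sqrt{h_m(t)^{ab}\xi_a\xi_b}$. The full symbol of $B_m$ is obtained by solving the Riccati equation recursively in decreasing homogeneity. Up to the zeroth-order term $A_{m,t}$ and the density factor, the DN map on $C_c^\infty(U)$ equals $B_m(0)$ modulo smoothing operators.

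\textbf{Step 2: comparison of symbols.} Equality of the DN maps on $\mathcal{H}_\Gamma$ gives equality of the full symbols of these boundary operators. Testing on constants, which lie in $\mathcal{H}_\Gamma$, supplies the zeroth-order information that the pseudodifferential symbol alone does not capture. Inverting the recursion then shows that, at $t=0$, the full $t$-jets of the gauge-invariant combinations agree. The $x$-components of $A$ are determined only up to an additive $d\log\kappa_0$, since $A$ is purely imaginary, so conjugation by the positive function $\kappa_0$ preserves this class. The metric is determined only up to the conformal/potential ambiguity: in dimension $n\geq3$, the conformal covariance of $\Delta_g+q$ means one can only hope to determine $h_m$ modulo a conformal factor $\lambda_0$, with $q$ absorbing the change.

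\textbf{Step 3: construction of $\kappa_0$ and $\lambda_0$.} I would build the Taylor series of $\kappa_0$ and $\lambda_0$ order by order in $t$, and then realize them by Borel's lemma as smooth positive functions. The requirements are $\kappa_0|_{t=0}=\lambda_0|_{t=0}=1$ and that, at each order $t^k$, the coefficient mismatch in \eqref{eq:formal-operator-package} vanishes. At order $0$ the normalization holds because the boundary metrics coincide and the data agree. The point I expect to be the main obstacle is the first-order term: showing that the DN data force $d\lambda_0|_{t=0}=0$. This is precisely the corrected normalization of \cite[Lemma 2.1(b)]{LassasMatti2022TCpf}. The subprincipal symbol of the DN map contains the second fundamental form of $\partial M$ together with $\partial_t$ of the conformal factor, and its trace part must be matched. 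I would carry out this computation explicitly, tracking the density factor $\sqrt{\det g}$ in \eqref{def:lin-Sch-DN}. Higher orders are then a triangular linear recursion, in which the order-$k$ coefficient of the mismatch depends affinely on $\partial_t^k\kappa_0$ and $\partial_t^k\lambda_0$, and can be solved.

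\textbf{Step 4: solution package.} Set $V=U_2^h-\kappa_0U_1^h$. By \eqref{eq:formal-operator-package},
\begin{equation*}
 \kappa_0^{-1}\mathcal{P}_2^{\mathcal{C}}(\kappa_0 U_1^h)=\lambda_0\mathcal{P}_1^{\mathcal{C}}U_1^h+O(t^\infty)=O(t^\infty),
\end{equation*}
so $\mathcal{P}_2^{\mathcal{C}}V=O(t^\infty)$. Moreover $V|_{t=0}=0$ because $\kappa_0|_{t=0}=1$. Equality of $\Lambda_{g_m,A_m,q_m}h$ on $\Gamma$, combined with the matched normal components of $A$ and the choice of $\partial_t\kappa_0|_{t=0}$, gives $\partial_tV|_{t=0}=0$. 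Since $t=0$ is noncharacteristic, the equation expresses each $\partial_t^{k+2}V|_{t=0}$ in terms of lower normal derivatives and their tangential derivatives. Induction then gives $V=O(t^\infty)$, which is \eqref{eq:formal-solution-package}.
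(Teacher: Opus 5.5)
There is a genuine gap, and it lies in your choice of collars. If $\chi_m$ are the $g_m$-boundary normal coordinates, then $\mathcal{P}_1^{\mathcal{C}}$ and $\mathcal{P}_2^{\mathcal{C}}$ both have $D_t^2$-coefficient $1$, and conjugation by $\kappa_0$ does not change principal parts. The $D_t^2$-coefficient of \eqref{eq:formal-operator-package} therefore forces $\lambda_0=1+O(t^\infty)$. The tangential principal part then forces $h_2^{ab}-h_1^{ab}=O(t^\infty)$ on $\mathcal{C}$.

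The DN data do not imply this. Take $g_2=c^{-1}g_1$, $A_2=A_1$, and $q_2=c\bigl(q_1-c^{(n-2)/4}\Delta_{g_1}(c^{-(n-2)/4})\bigr)$, where $c|_{\partial M}=1$, $dc|_{\partial M}=0$, and $c=1+f(x')t^2+O(t^3)$ with $f\neq0$. The conformal identity of \cite[Proposition~8.2]{dossantosferreira2009limiting} gives equal DN maps. Yet the $g_2$-boundary normal coordinate is $s=t+O(t^3)$, and on $\mathcal{C}$ one has $h_2=(1-ft^2)h_1+O(t^3)$.

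So the order-by-order system in your Step 3 is overdetermined, not triangular. A conformal change moves the boundary normal coordinates as well as the coefficients, so it cannot be absorbed into a multiplier $\lambda_0$ on collars fixed in advance. For the same reason, the real difficulty is not $d\lambda_0|_{t=0}=0$ but all the higher orders.

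The missing idea is to normalize the conformal factor before choosing the collars. By \cite[Lemma~2.1(b)]{LassasMatti2022TCpf}, for each $m$ separately there is $c_m>0$ with $c_m|_{\partial M}=1$ and $dc_m|_{\partial M}=0$ with the following property. In the boundary normal coordinates of $\widetilde g_m=c_m^{-1}g_m$, the trace quantity $E_m=\tfrac12 h_{m,ab}\partial_tG_m^{ab}$ satisfies $\partial_t^jE_m|_{t=0}=0$ for every $j\geq1$. One then:
\begin{enumerate}[label=(\roman*)]
 \item takes $\chi_m$ to be these coordinates;
 \item removes the normal component of $A_m$ by an imaginary gauge $\psi_m$ vanishing on $\partial M$;
 \item works with $\widetilde{\mathcal{P}}_m=c_m\rho_m\mathcal{P}_m\rho_m^{-1}$, where $\rho_m=c_m^{(n-2)/4}e^{-i\psi_m}$, which has the same DN map.
\end{enumerate}
In the recursion for $b_{-j}$, the odd part gives $\delta\partial_t^j\widetilde{A}_a=0$. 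The even part gives only $\tfrac14\delta\partial_t^{j+1}G^{ab}+G^{ab}\delta\partial_t^{j-1}\widetilde{q}=0$. The condition $\partial_t^jE_m=0$ makes $h_{ab}\delta\partial_t^{j+1}G^{ab}=0$, and taking the trace separates the metric jets from the potential jets. This splitting is exactly what your phrase ``the full $t$-jets of the gauge-invariant combinations agree'' leaves unproved.

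It yields $\widetilde{\mathcal{P}}_2^{\mathcal{C}}-\widetilde{\mathcal{P}}_1^{\mathcal{C}}=O(t^\infty)$. After that, $\kappa_0=\rho_1/\rho_2$ and $\lambda_0=c_1/c_2$ are explicit and satisfy \eqref{eq:formal-boundary-normalization} by construction, so no Borel construction is needed. Your Step 4 is then sound and matches the paper's normal-derivative recursion.
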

The proof, including the normalization, symbol recursion, and passage from coefficient jets to solution jets, is given in Appendix \ref{app:magnetic-boundary-jets}.

\subsection{Boundary-layer solutions}

We use the large parameter $\tau\geq1$ and consider $\tau\to\infty$. We use \textit{boundary-layer quasimodes}: approximate solutions that oscillate tangentially and decay exponentially into the interior on a scale $\tau^{-1}$. The local construction in \cite[Theorem 5.1]{oksanen2024inverse} gives a general quasimode for principal-type operators, which has the following specialization for second-order elliptic operators. Here $p_2(x,\zeta)$ is the homogeneous quadratic principal symbol of $P$, and $O(\tau^{-\infty})$ means $O(\tau^{-N})$ for every $N>0$ as $\tau\to\infty$, in the stated norm. Notice that $ \tau $ corresponds to $h^{-1}$ in the original convention of the cited construction, while in this article $h$ continues to denote boundary data.

\begin{proposition}\label{Prop-boundary-quasimodes}
Let $M$ be a compact smooth manifold with boundary, and let $P$ be a smooth real second-order uniformly elliptic operator on $M$. Near a boundary point choose a boundary coordinate neighborhood $U\subset\partial M$, extend its tangential coordinates $x'$ to inward coordinates $(x',t)$, and fix a nonzero real tangential covector $\xi'$. After shrinking $U$ if necessary, for every smooth cutoff $\chi\in C_c^\infty(U)$ there are quasimodes
\begin{equation*}
 u_\tau=e^{i\tau\Phi}a_\tau,\qquad
 a_\tau\sim\sum_{j\geq0}\tau^{-j}a_j,\qquad
 u_\tau|_{t=0}=e^{i\tau x'\cdot\xi'}\chi(x'),\quad \tau\geq1,
\end{equation*}
supported in a sufficiently small collar over $U$. The phase satisfies
\begin{equation}\label{eq:phasePhi}
 \Phi(x',0)=x'\cdot\xi',\qquad
 \partial_t\Phi(x',0)=z(x'),\qquad
 p_2(x,d\Phi)=0\ \text{to infinite order at }t=0,
\end{equation}
where $z(x')$ is the simple root in the upper half-plane of $p_2(x',0,\xi',z)=0$. Moreover, $\operatorname{Im}\Phi\geq c t$ on the support, $a_0(x',0)=\chi(x')$, and for every fixed $k,\ell\geq0$,
\begin{equation*}
 \|u_\tau\|_{H^k(M)}=O(\tau^{k-1/2}),\qquad
 \|Pu_\tau\|_{H^\ell(M)}=O(\tau^{-\infty}).
\end{equation*}
If $\chi\not\equiv0$, then $\|u_\tau\|_{L^2(M)}\asymp \tau^{-1/2}$.
\end{proposition}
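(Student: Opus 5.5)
The plan is to specialize the complex-phase WKB construction of \cite[Theorem 5.1]{oksanen2024inverse} by hand, in boundary normal-type coordinates $(x',t)$ over $U$, with $t\geq0$ the inward variable. Write the principal symbol as
\begin{equation*}
 p_2(x,\zeta)=a^{tt}\zeta_t^2+2a^{tb}\zeta_b\zeta_t+a^{bc}\zeta_b\zeta_c .
\end{equation*}
For real $\xi'\neq0$, ellipticity shows that $z\mapsto p_2(x',0,\xi',z)$ has no real roots. Its two roots are therefore complex conjugate and distinct, and exactly one of them, $z(x')$, lies in the upper half-plane. Since the root is simple, $z$ is smooth in $x'$, and after shrinking $U$ to a relatively compact set we get $\operatorname{Im}z\geq 2c>0$. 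Simplicity also gives the key nondegeneracy
\begin{equation*}
 \partial_{\zeta_t}p_2(x',0,\xi',z(x'))=2\bigl(a^{tt}z+a^{tb}\xi'_b\bigr)\neq0 .
\end{equation*}

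\emph{Eikonal step.} I would seek $\Phi\sim x'\cdot\xi'+t\,z(x')+\sum_{k\geq2}t^k\phi_k(x')$. Expanding $p_2(x,d\Phi)$ in powers of $t$, the coefficient of $t^{k-1}$ has the form
\begin{equation*}
 k\,\partial_{\zeta_t}p_2(x',0,\xi',z)\,\phi_k+(\text{terms involving only }\phi_2,\dots,\phi_{k-1}).
\end{equation*}
By the nondegeneracy above, these equations determine the $\phi_k$ recursively. Borel's lemma then produces a smooth complex $\Phi$ satisfying \eqref{eq:phasePhi}. Since $\operatorname{Im}\Phi=t\operatorname{Im}z+O(t^2)$, shrinking the collar to $t<\varepsilon$ gives $c t\leq\operatorname{Im}\Phi\leq Ct$.

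\emph{Transport step.} Conjugation gives
\begin{equation*}
 e^{-i\tau\Phi}P(e^{i\tau\Phi}a)=-\tau^2p_2(x,d\Phi)a+i\tau\,Ta+Pa,
\end{equation*}
where $T=\partial_\zeta p_2(x,d\Phi)\cdot\partial_x+(\text{zeroth order})$ is a complex first-order operator. Its $\partial_t$-coefficient at $t=0$ is the nonzero quantity $\partial_{\zeta_t}p_2(x',0,\xi',z)$, so $t=0$ is noncharacteristic for $T$. I would therefore solve $Ta_0=O(t^\infty)$ with $a_0|_{t=0}=\chi$, and $iTa_j=-Pa_{j-1}+O(t^\infty)$ with $a_j|_{t=0}=0$. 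Each is solved as a formal Taylor series in $t$, where the $t$-coefficients are again determined recursively, and realized by Borel's lemma. All $a_j$ are supported in $\operatorname{supp}\chi\times[0,\varepsilon)$. A Borel summation in $\tau$ then gives $a_\tau\sim\sum\tau^{-j}a_j$ with $a_\tau|_{t=0}=\chi$. Finally I multiply by a cutoff $\psi(t)$ equal to $1$ near $t=0$ and supported in $t<\varepsilon$; this gives the boundary trace $u_\tau|_{t=0}=e^{i\tau x'\cdot\xi'}\chi(x')$ and $a_0(x',0)=\chi(x')$.

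\emph{Estimates.} I expect the main obstacle to be the bookkeeping that turns the $O(t^\infty)$ errors into $O(\tau^{-\infty})$ in every $H^\ell$; the key is the exponential factor. The error $Pu_\tau$ consists of three kinds of terms, each multiplied by $e^{i\tau\Phi}$.
\begin{enumerate}[label=(\roman*)]
\item Terms $\tau^m t^N b(x)$ coming from the eikonal and transport remainders. Using $t^Ne^{-c\tau t}\leq C_N\tau^{-N}$ and choosing $N$ large, these are $O(\tau^{-K})$ for any $K$.
\item Terms $\tau^{-J}$ times polynomials in $\tau$ of bounded degree, from truncating the asymptotic sum. Taking $J$ large makes these small.
\item Terms from derivatives of $\psi$, supported in $t\geq\varepsilon/2$. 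These are $O(e^{-c\tau\varepsilon/2})$.
\end{enumerate}
Each $x$-derivative costs at most a factor $\tau$, so every $H^\ell$ norm of $Pu_\tau$ is $O(\tau^{-\infty})$. For the size of $u_\tau$, each derivative again costs a factor $\tau$, while $\int_0^\varepsilon e^{-2c\tau t}\,dt\asymp\tau^{-1}$. This yields $\|u_\tau\|_{H^k}=O(\tau^{k-1/2})$. For the lower bound, use $\operatorname{Im}\Phi\leq Ct$ together with $|a_\tau|\geq|\chi|/2$ on a region $t<\delta$ over a set where $\chi\neq0$. This gives $\|u_\tau\|_{L^2}^2\gtrsim\int_0^\delta e^{-2C\tau t}\,dt\gtrsim\tau^{-1}$, hence $\|u_\tau\|_{L^2}\asymp\tau^{-1/2}$ when $\chi\not\equiv0$.
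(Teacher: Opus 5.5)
Your proposal is correct. It carries out by hand the complex-phase boundary-layer WKB construction: the eikonal equation is solved through the simple upper root and Borel's lemma, the noncharacteristic transport equations are solved as Taylor series in $t$, and the bound $t^Ne^{-c\tau t}\le C_N\tau^{-N}$ turns $O(t^\infty)$ errors into $O(\tau^{-\infty})$. The paper gives no separate proof; it obtains the statement by specializing \cite[Theorem 5.1]{oksanen2024inverse} to second-order elliptic $P$, and your argument is essentially that specialization written out in detail.
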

If the Dirichlet problem for $P$ is invertible, then solving $Pr_\tau=-Pu_\tau$, $r_\tau|_{\partial M}=0$, gives exact solutions $u_\tau+r_\tau$ with $r_\tau=O(\tau^{-\infty})$ in every fixed Sobolev norm. When the principal symbol is a nonzero real multiple of that of a metric $g$, its upper root in inward boundary normal coordinates gives
\begin{equation*}
 d\Phi|_{t=0}=\xi'+i|\xi'|_g\nu.
\end{equation*}
Here $\nu=dt$ is the inward unit conormal. By contrast, the form-valued coefficients $\nu_i$ in \eqref{def:normal-derivative} use the outward boundary orientation, so $\nu_i\mathcal{J}^i$ is the outward boundary current form.

\subsection{Multiple linearization}
At an admissible background in Definition \ref{def:admissible-background}, we perform \textit{multiple linearization} of \eqref{eq:quasi_gen}. Let $K\in\mathbb{Z}_+$, choose $(h_1,\ldots,h_K)\in(C^\infty(\partial M))^K$, and set
\begin{equation*}
 f_\epsilon=f_0+\sum_{l=1}^K\epsilon^l h_l,
 \qquad \epsilon=(\epsilon^1,\ldots,\epsilon^K)\in\mathbb{R}^K,
\end{equation*}
for $\epsilon$ close to zero. Lemma \ref{lmm:forward} justifies repeated differentiation of the equation, the solution map, and the DN map with respect to $\epsilon$. In the proof of Theorem \ref{thm:main} we take $f_0\equiv s$ to be constant, so that $u^{f_0}\equiv s$ and $du^{f_0}=0$. The formulae below remain valid for a general background solution.

For a nonempty set $B=\{b_1,\ldots,b_r\}\subset\{1,\ldots,K\}$, let $ h_B:=\{h_{b_1},\ldots ,h_{b_r}\} $ and write
\begin{equation}\label{def-lin-sol}
 w^{h_B}_{f_0}:=
 \left.\partial_{\epsilon^{b_1}}\cdots\partial_{\epsilon^{b_r}}
 u^{f_\epsilon}\right|_{\epsilon=0}.
\end{equation}
The first linearized equation is
\begin{equation}\label{eq:1st-order}
 \left\{
 \begin{aligned}
 \partial_i\left(\partial_{\xi_j}\mathcal{J}^i
 \partial_jw^{h_1}+\partial_s\mathcal{J}^i w^{h_1}\right)&=0
 &&\text{in }M,\\
 w^{h_1}&=h_1&&\text{on }\partial M,
 \end{aligned}\right.
\end{equation}
where coefficients are evaluated at $(x,u^{f_0},du^{f_0})$.

For convenience of higher-order calculations, we use the notation:
\begin{equation}\label{def-QV}
 Q^\alpha\mathcal{V}:=
 \begin{cases}
 \partial_s\mathcal{V},&\alpha=0,\\
 \partial_{\xi_l}\mathcal{V},&\alpha=l\in\{1,\ldots,n\},
 \end{cases}
 \qquad
 P_\alpha u:=
 \begin{cases}
 u,&\alpha=0,\\
 \partial_l u,&\alpha=l\in\{1,\ldots,n\}.
 \end{cases}
\end{equation}
An index $\alpha=0$ in $Q^\alpha$ denotes differentiation with respect to the potential variable $s$, whereas $\alpha=j\geq1$ denotes differentiation with respect to the gradient variable $\xi_j$. These coefficient derivatives are distinct from the spatial derivatives $\partial_j$ acting on solutions in $P_j$. For $K\geq1$, let
\begin{equation}\label{def-QKV}
 (Q_K\mathcal{V})^{i,\alpha_1\cdots\alpha_K}
 :=Q^{\alpha_K}\cdots Q^{\alpha_1}\mathcal{V}^i,
\end{equation}
and thus we can rewrite the first-order linearized equation \eqref{eq:1st-order} as
\begin{equation}\label{eq:1st-order-re}
 \partial_i\left((Q_1\mathcal{J})^{i,\alpha}
 (x,u^{f_0},du^{f_0})P_\alpha w^{h_1}\right)=0.
\end{equation}
Moreover, we denote
\begin{equation*}
 T^{i,K}_{f_0}(v_1,\ldots,v_K):=
 (Q_K\mathcal{J})^{i,\alpha_1\cdots\alpha_K}
 (x,u^{f_0},du^{f_0})
 \prod_{r=1}^K P_{\alpha_r}v_r.
\end{equation*}
This operator is symmetric in its arguments.

Let $\Pi(K)$ be the set of set partitions of $\{1,\ldots,K\}$, namely
\begin{equation*}
    \Pi(K)=\left\{\pi=\{B_1,\ldots,B_{|\pi|}\}:\bigsqcup\limits_{r=1}^{|\pi|} B_r=\{1,\ldots,K\}\right\}.
\end{equation*}
The chain and product rules give the general $K$-th linearized equation
\begin{equation}\label{eq:higher-order-gen}
 \partial_i\left(
 \sum_{\pi=\{B_1,\ldots,B_{|\pi|}\}\in\Pi(K)}
 T^{i,|\pi|}_{f_0}
 (w^{h_{B_1}}_{f_0},\ldots,w^{h_{B_{|\pi|}}}_{f_0})
 \right)=0.
\end{equation}
For $K\geq2$, $w^{h_{\{1,\ldots,K\}}}_{f_0}=0$ on $\partial M$. The corresponding DN derivative is
\begin{equation}\label{eq:higher-order-DN}
 (D^KN_{\mathcal{J}})_{f_0}(h_1,\ldots,h_K)
 =
 \sum_{\pi=\{B_1,\ldots,B_{|\pi|}\}\in\Pi(K)}
 T^{i,|\pi|}_{f_0}
 (w^{h_{B_1}}_{f_0},\ldots,w^{h_{B_{|\pi|}}}_{f_0})\nu_i.
\end{equation}

\subsection{A tensor symmetry lemma}
The following simple lemma is used in the higher-order linearization arguments.
\begin{lemma}\label{lmm:LAsym}
If a tensor is symmetric under interchange of two indices and antisymmetric under interchange of another pair with at least one index position in common, then it vanishes.
\end{lemma}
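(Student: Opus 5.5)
The statement to prove is Lemma \ref{lmm:LAsym}: a tensor $T$ that is symmetric in one pair of index positions and antisymmetric in another pair sharing at least one position with the first must vanish. The plan is to reduce to three index positions and then compose the two transpositions until a sign contradiction appears.

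First I would dispose of the degenerate case. If the two pairs coincide, then $T=T^{\sigma}$ and $T=-T^{\sigma}$ for the same transposition $\sigma$. Hence $2T=0$, and over $\mathbb{R}$ or $\mathbb{C}$ this gives $T=0$.

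Otherwise the pairs share exactly one position. I would relabel so that $T$ is symmetric in positions $(1,2)$ and antisymmetric in positions $(2,3)$. The remaining indices are carried along passively, so I suppress them and write $T_{abc}$. The key step is to alternate the two relations three times:
\begin{equation*}
 T_{abc}=T_{bac}=-T_{bca}=-T_{cba}=T_{cab}=T_{acb}=-T_{abc}.
\end{equation*}
Here the steps alternate: symmetry in $(1,2)$, then antisymmetry in $(2,3)$, and so on. The three symmetry steps and three antisymmetry steps together realize $((12)(23))^3=\mathrm{id}$, because $(12)(23)$ is a $3$-cycle. The net sign is $(-1)^3=-1$, so $T_{abc}=-T_{abc}$ and therefore $T=0$.

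The only point needing care is the bookkeeping in this chain. At each step the relevant positions must still be positions $(1,2)$ or $(2,3)$ of the current expression, not the original letters. I would check this simply by writing out the chain above explicitly. There is no real obstacle beyond that.
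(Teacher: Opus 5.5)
Your proof is correct and follows essentially the same route as the paper. Both dispose of coinciding pairs, reduce to three index positions, and alternate the two transpositions six times to reach $T_{abc}=-T_{abc}$. The only difference is cosmetic: you label the pairs $(1,2)$ and $(2,3)$, whereas the paper uses $(1,2)$ and $(1,3)$. Your chain is valid step by step, since each swap acts on positions of the current expression.
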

\begin{proof}
The assertion is immediate if the pairs coincide. Otherwise fix the remaining indices and relabel the three index positions so that the two pairs are $(1,2)$ and $(1,3)$. Successive use of the two symmetries gives
\begin{equation*}
 T_{abc}=T_{bac}=-T_{cab}=-T_{acb}
 =T_{bca}=T_{cba}=-T_{abc}.
\end{equation*}
Thus every component is zero.
\end{proof}

\section{Recovery of the first-order derivatives}\label{sec:first-order}

We prove Theorem \ref{thm:first-order}, assuming only equality of the first linearized measurements stated there. Fix a constant background $f_0\equiv s$, with $u^{f_0}\equiv s$ under the normalized constant-solution convention. Unless stated otherwise, the coefficients in this section are evaluated at $(x,s,0)$. Condition \eqref{condition:regularity} makes both $\partial_\xi\mathcal{J}(x,s,0)$ and $\partial_s\mathcal{J}(x,s,0)$ analytic in $x$, so the induced linear operators have analytic spatial coefficients at this fixed background.

For $m=1,2$, let $w_m^h$ solve \eqref{eq:1st-order-re} for $\mathcal{J}_m$. In Section \ref{sec:first-order} and Section \ref{sec:higher-order}, equality of measured DN maps and their derivatives is understood on $\Gamma$ and on directions in $\mathcal{H}_\Gamma$. The proof proceeds through magnetic reduction, boundary determination, global analytic recovery, and comparison of the original current coefficients.

\subsection{Reduction to a magnetic Schr\"odinger equation}
\label{subsec:magnetic-reduction}

For one current map, retain the notation
\begin{equation}\label{def:1st-order-notations}
 \gamma^{ij}:=\frac{1}{2}\left((Q_1\mathcal{J})^{i,j}
 +(Q_1\mathcal{J})^{j,i}\right),\qquad
 \varphi^{ij}:=(Q_1\mathcal{J})^{i,j}-\gamma^{ij},\qquad
 \Xi^i:=(Q_1\mathcal{J})^{i,0}.
\end{equation}
Then $\gamma$ is symmetric positive definite and $\varphi$ is antisymmetric, and the first linearized equation is
\begin{equation}\label{eq:1st-order-r}
 \partial_i(\gamma^{ij}\partial_jw)
 +(\partial_i\varphi^{ij}+\Xi^j)\partial_jw
 +(\partial_j\Xi^j)w=0.
\end{equation}
Since $n\geq3$, $\gamma$ determines a unique metric $g$ by
\begin{equation}\label{eq:gamma-metric}
 \gamma^{ij}=:\sqrt{\det g}\,g^{ij},\qquad
 \det g=(\det\gamma)^{2/(n-2)}.
\end{equation}
Define the real two-form $F$ and the real one-form $X$ by
\begin{equation}\label{eq:F-X}
 \varphi^{ij}:=\sqrt{\det g}\,g^{ik}F_{kl}g^{lj},
 \qquad
 \Xi^j:=\sqrt{\det g}\,g^{jk}X_k.
\end{equation}
With the codifferential convention of Subsection \ref{subsec:magnetic-tools}, the components are
\begin{equation*}
 (d_g^\star F)^j
 =-\frac{1}{\sqrt{\det g}}\partial_i
 \left(\sqrt{\det g}\,F^{ij}\right),\qquad
 d_g^\star X
 =-\frac{1}{\sqrt{\det g}}\partial_i
 \left(\sqrt{\det g}\,X^i\right).
\end{equation*}
A direct expansion of \eqref{eq:1st-order-r} therefore gives
\begin{equation}\label{eq:1st-order-g}
 -\frac{1}{\sqrt{\det g}}Lw
 =-\Delta_gw-\langle X-d_g^\star F,dw\rangle_g
 +(d_g^\star X)w,
\end{equation}
where $Lw$ denotes the left-hand side of the divergence-form equation.

Set
\begin{equation}\label{eq:def-A}
 A:=-\frac{i}{2}(X-d_g^\star F),\qquad
 a:=iA=\frac{1}{2}(X-d_g^\star F).
\end{equation}
Thus $a$ is real and $A$ is purely imaginary, with $g^{jk}A_jA_k=-|a|_g^2$. The expansion in Subsection \ref{subsec:magnetic-tools} turns \eqref{eq:1st-order-g} into
\begin{equation}\label{eq:1st-order-m}
 \mathcal{P}_{g,A,q} w:=(d_{\overline{A}}^\star d_A+q)w=0,
 \qquad
 q:=d_g^\star(iA)-g^{jk}A_jA_k
 =\frac{1}{2}d_g^\star X+\frac{1}{4}|X-d_g^\star F|_g^2.
\end{equation}

Write $\mathcal{P}=\mathcal{P}_{g,A,q}$ for this induced operator and retain the magnetic Schr\"odinger map \eqref{def:lin-Sch-DN}. Translation covariance gives the following structural property.

\begin{lemma}\label{lmm:coclosed-magnetic-potential}
Let $\mathcal{J}$ satisfy \eqref{condition:elliptic}, including translation covariance, and let $f_0$ be an admissible background in Definition \ref{def:admissible-background}. Define $g,F,X,A,q$ by \eqref{def:1st-order-notations}--\eqref{eq:1st-order-m}, evaluating the current derivatives at $(x,u^{f_0},du^{f_0})$. Then $L1=0$ and
\begin{equation*}
 d_g^\star(iA)=0.
\end{equation*}
Consequently, $q=-g^{jk}A_jA_k=|iA|_g^2$, and, with $a=iA$,
\begin{equation}\label{eq:drift-form}
 \mathcal{P}=-\Delta_g-2\langle a,d\,\cdot\,\rangle_g .
\end{equation}
For a background of class $C^{2,\alpha}$, the differentiated identities are understood distributionally. At constant backgrounds they hold pointwise with analytic coefficients.
\end{lemma}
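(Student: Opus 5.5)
The plan is to differentiate the translation-covariance identity, rewrite the resulting scalar identity in terms of $g$, $F$, $X$, and then compare with the formula for $q$ in \eqref{eq:1st-order-m}.

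\textbf{Step 1: $L1=0$.} By \eqref{condition:elliptic}, $u^{f_0+\lambda}\equiv u^{f_0}+\lambda$ for small $|\lambda|$. This is an explicit affine family of solutions, so we may differentiate the equation $\partial_i\mathcal{J}^i(x,u^{f_0}+\lambda,du^{f_0})=0$ in $\lambda$ at $\lambda=0$. This gives $\partial_i\bigl((Q_1\mathcal{J})^{i,0}\bigr)=0$, that is, $\partial_i\Xi^i=0$. Equivalently, the linearized operator $L$ of \eqref{eq:1st-order-re} annihilates the constant function $1$, because the gradient terms vanish on constants. This is the computation already recorded at the end of the proof of Lemma \ref{lmm:forward}. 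For a $C^{2,\alpha}$ background, $\Xi$ is only $C^{1,\alpha}$, and the identity is read in the weak sense, exactly as the statement allows.

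\textbf{Step 2: coclosedness.} By \eqref{eq:F-X}, $\Xi^j=\sqrt{\det g}\,X^j$. The codifferential formula then gives $d_g^\star X=-(\det g)^{-1/2}\partial_j\Xi^j=0$. Next we check that $d_g^\star d_g^\star F=0$. In coordinates this is $\partial_j\partial_i(\sqrt{\det g}\,F^{ij})=\partial_j\partial_i\varphi^{ij}=0$, which holds by antisymmetry of $\varphi$. At a merely $C^{2,\alpha}$ background it holds distributionally, by pairing against test functions. Together these give
\begin{equation*}
 d_g^\star(iA)=d_g^\star a=\tfrac12\bigl(d_g^\star X-d_g^\star d_g^\star F\bigr)=0 .
\end{equation*}

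\textbf{Step 3: the formulas for $q$ and $\mathcal{P}$.} Substituting $d_g^\star(iA)=0$ into $q=d_g^\star(iA)-g^{jk}A_jA_k$ from \eqref{eq:1st-order-m} gives $q=-g^{jk}A_jA_k=|a|_g^2$, since $A=-ia$. For the operator, we use the expansion of $d_{\overline{A}}^\star d_A$ in Subsection \ref{subsec:magnetic-tools}. Its zeroth-order part is $i\,d_g^\star A+g^{jk}A_jA_k$, and adding $q$ gives exactly $i\,d_g^\star A=d_g^\star a=0$, so the zeroth-order terms cancel. The first-order part is $-2iA^j\partial_j=-2a^j\partial_j$. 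Hence $\mathcal{P}=-\Delta_g-2\langle a,d\,\cdot\,\rangle_g$, which is \eqref{eq:drift-form}. This is consistent with \eqref{eq:1st-order-g}, which already shows that $-(\det g)^{-1/2}L$ has drift $-\langle X-d_g^\star F,d\cdot\rangle_g$ and zeroth-order term $(d_g^\star X)w=0$.

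At constant backgrounds, \eqref{condition:regularity} makes $\gamma$, $\varphi$, and $\Xi$ analytic in $x$. Then $g$, $F$, $X$, and $a$ are analytic, and all the identities above hold pointwise.

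No step is a real obstacle. The only point needing care is the low-regularity case in Step 2, where $d_g^\star F$ and $\partial_i\Xi^i$ must be interpreted as distributions, and their products with $g$ require the stated $C^{1,\alpha}$ regularity of the coefficients.
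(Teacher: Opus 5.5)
Your proposal is correct and follows the paper's proof essentially step for step: differentiating the affine solution family $u^{f_0}+\lambda$ gives $L1=\partial_j\Xi^j=0$, hence $d_g^\star X=0$; then $(d_g^\star)^2F=0$, which you verify directly from $\partial_i\partial_j\varphi^{ij}=0$, gives $d_g^\star(iA)=0$, and the formulas for $q$ and $\mathcal{P}$ follow by substitution into \eqref{eq:1st-order-m} and the magnetic expansion. One small correction to your low-regularity remark: $\partial_i\Xi^i$ is classical at a $C^{2,\alpha}$ background since $\Xi\in C^{1,\alpha}$, so only $(d_g^\star)^2F$, and hence $d_g^\star(iA)$, genuinely requires the distributional reading.
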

\begin{proof}
Fix $f_0$ and set $f_\epsilon=f_0+\epsilon$. Translation covariance in \eqref{condition:elliptic} gives $u^{f_\epsilon}=u^{f_0}+\epsilon$, so differentiation at $\epsilon=0$ yields the first linearized solution $w\equiv1$. Hence
\begin{equation*}
 L1=\partial_j\Xi^j=0.
\end{equation*}
By \eqref{eq:F-X},
\begin{equation*}
 d_g^\star X
 =-\frac{1}{\sqrt{\det g}}\partial_j
 \left(\sqrt{\det g}\,X^j\right)
 =-\frac{1}{\sqrt{\det g}}\partial_j\Xi^j=0.
\end{equation*}
Since $(d_g^\star)^2=0$ and $iA=\frac{1}{2}(X-d_g^\star F)$ by \eqref{eq:def-A},
\begin{equation*}
 d_g^\star(iA)
 =\frac{1}{2}\left(d_g^\star X-(d_g^\star)^2F\right)=0.
\end{equation*}
The formula for $q$ follows from \eqref{eq:1st-order-m}. Substituting these two identities in that equation gives \eqref{eq:drift-form}.
\end{proof}
The following lemma shows that the first linearized DN map and the magnetic Schr\"odinger DN map differ by a boundary differential operator.
\begin{lemma}\label{lmm:DN-diff}
Let $\mathcal{J}$ and $f_0$ satisfy the hypotheses of Lemma \ref{lmm:forward}, and define $\gamma,\varphi,\Xi,g,A,q$ by \eqref{def:1st-order-notations}--\eqref{eq:1st-order-m} at $(x,u^{f_0},du^{f_0})$. For every $h\in C^\infty(\partial M)$, the original first linearized DN map and the magnetic Schr\"odinger map \eqref{def:lin-Sch-DN} differ by the boundary differential operator
\begin{equation}\label{eq:DN-difference}
 (D^1N_{\mathcal{J}})_{f_0}(h)-\Lambda_{g,A,q}(h)
 =
 \nu_i\left(
 \varphi^{ij}\partial_jh
 +\frac{1}{2}(\Xi^i-\partial_k\varphi^{ki})h
 \right).
\end{equation}
Here $\nu_i$ has exactly the form-valued convention of \eqref{def:normal-derivative}. No choice of a unit conormal is made in this formula. The contraction $\nu_i\varphi^{ij}\partial_jh$ is tangential and does not depend on how $h$ is extended off the boundary.
\end{lemma}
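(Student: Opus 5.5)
The plan is to compute both boundary forms for the same solution $w=w^h$ and subtract. By \eqref{def:first-linearized-data} and \eqref{def:1st-order-notations},
\begin{equation*}
 (D^1N_{\mathcal{J}})_{f_0}(h)=\nu_i\bigl((\gamma^{ij}+\varphi^{ij})\partial_jw+\Xi^iw\bigr).
\end{equation*}
The first step is to observe that $w$ is the same function for both operators. Indeed \eqref{eq:1st-order-g} and \eqref{eq:1st-order-m} show that $Lw=0$ is equivalent to $\mathcal{P}_{g,A,q}w=0$, since they differ by the nonvanishing factor $-1/\sqrt{\det g}$. Dirichlet uniqueness for the invertible zero-Dirichlet problem from Lemma \ref{lmm:forward} then identifies the two Dirichlet solutions with trace $h$.

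Next, expand \eqref{def:lin-Sch-DN} using $\sqrt{\det g}\,g^{ij}=\gamma^{ij}$ from \eqref{eq:gamma-metric}:
\begin{equation*}
 \Lambda_{g,A,q}h=\nu_i\gamma^{ij}\partial_jw+i\,\nu_i\sqrt{\det g}\,g^{ij}A_jw .
\end{equation*}
By \eqref{eq:def-A}, $iA_j=\tfrac12(X_j-(d_g^\star F)_j)$. Using \eqref{eq:F-X} we have $\sqrt{\det g}\,g^{ij}X_j=\Xi^i$. We also have $\sqrt{\det g}\,(d_g^\star F)^i=-\partial_k(\sqrt{\det g}F^{ki})=-\partial_k\varphi^{ki}$, since $\varphi^{ki}=\sqrt{\det g}\,F^{ki}$ with indices raised by $g$. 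Hence
\begin{equation*}
 i\sqrt{\det g}\,g^{ij}A_j=\tfrac12\bigl(\Xi^i+\partial_k\varphi^{ki}\bigr).
\end{equation*}
The only care needed here is the sign convention for $d_g^\star$ on two-forms together with the index order of $F^{ij}$ versus $(d_g^\star F)^j$. I expect this bookkeeping to be the main (modest) obstacle, and I will verify it against the expansion leading to \eqref{eq:1st-order-g}, where the drift $X-d_g^\star F$ must reproduce $\partial_i\varphi^{ij}+\Xi^j$.

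Subtracting gives
\begin{equation*}
 \nu_i\Bigl(\varphi^{ij}\partial_jw+\Xi^iw-\tfrac12\bigl(\Xi^i+\partial_k\varphi^{ki}\bigr)w\Bigr).
\end{equation*}
On the boundary $w=h$, so the zeroth-order term becomes $\tfrac12(\Xi^i-\partial_k\varphi^{ki})h$, and the sign of the $\partial_k\varphi^{ki}$ term must be checked at the previous step. For the first-order term, the antisymmetry of $\varphi$ gives $\nu_i\varphi^{ij}\nu_j=0$ in the form-valued sense. The covector $(\nu_i\varphi^{ij})_j$ is therefore annihilated by the conormal, so the corresponding vector field is tangent to $\partial M$. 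Equivalently, it is the pullback of the contraction of the $(n-2)$-vector density by $d w$, and $\iota^*dw=dh$. Hence $\nu_i\varphi^{ij}\partial_jw=\nu_i\varphi^{ij}\partial_jh$ depends only on $h$, and no normal derivative of $w$ or choice of extension enters. This proves \eqref{eq:DN-difference}.
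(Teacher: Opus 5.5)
Your proposal is correct and follows essentially the paper's proof. The paper likewise uses $\sqrt{\det g}\,(d_g^\star F)^i=-\partial_k\varphi^{ki}$ to get $\sqrt{\det g}\,g^{ij}(iA_j)=\tfrac12(\Xi^i+\partial_k\varphi^{ki})$, then subtracts the two boundary forms for the same $w$ and restricts to $w|_{\partial M}=h$. Your explicit remarks fill in points the paper leaves implicit: that $L$ and $\mathcal{P}_{g,A,q}$ share Dirichlet solutions (they differ by the factor $-1/\sqrt{\det g}$), and that antisymmetry makes $\nu_i\varphi^{ij}$ tangential.
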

\begin{proof}
By \eqref{eq:def-A} and \eqref{eq:F-X}, one has
\begin{equation*}
    \gamma^{ij}(iA_j)
 =\frac{1}{2}\sqrt{\det g}\,g^{ij}
 \left(X_j-(d_g^\star F)_j\right)
 =\frac{1}{2}\left(\Xi^i+\partial_k\varphi^{ki}\right),
\end{equation*}
where $\sqrt{\det g}\,(d_g^\star F)^i=-\partial_k\varphi^{ki}$. Consequently, subtracting the density-valued magnetic conormal derivative from the original boundary current form gives
\begin{align*}
\nu_i\left((\gamma^{ij}+\varphi^{ij})\partial_jw+\Xi^iw\right)
-\nu_i\sqrt{\det g}g^{ij} (\partial_j+iA_j)w=
\nu_i\left(
\varphi^{ij}\partial_jw
+\frac{1}{2}(\Xi^i-\partial_k\varphi^{ki})w\right).
\end{align*}
Restricting $w$ to its boundary value $h$ proves \eqref{eq:DN-difference}.
\end{proof}


The next lemma determines the correction in Lemma \ref{lmm:DN-diff} from the first linearized DN map, reducing the original problem to recovering $Q_1\mathcal{J}$ from the induced magnetic Schr\"odinger DN map.

\begin{lemma}\label{lmm:bdr-1}
Let the two currents on the manifold of Theorem \ref{thm:main} satisfy \eqref{condition:regularity}, \eqref{condition:elliptic}, and \eqref{condition:constant-solutions}. Fix a common admissible constant background $s$ in Definition \ref{def:admissible-background}, and define the induced coefficients by \eqref{def:1st-order-notations}--\eqref{eq:1st-order-m} at $(x,s,0)$. If $(D^1N_{\mathcal{J}_1})_sh=(D^1N_{\mathcal{J}_2})_sh$ on $\Gamma$ for every $h\in\mathcal{H}_\Gamma$, then the boundary metric, $\nu_i\varphi_m^{ij}$, and $\nu_i(\Xi_m^i-\partial_k\varphi_m^{ki})$ agree for $m=1,2$ on $\Gamma$ under the canonical boundary identification. Consequently,
\begin{equation*}
 \Lambda_{g_1,A_1,q_1}h=\Lambda_{g_2,A_2,q_2}h\quad\text{on }\Gamma,\qquad h\in\mathcal{H}_\Gamma.
\end{equation*}
\end{lemma}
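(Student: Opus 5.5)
The plan is to read off the boundary data from the high-frequency behavior of the first linearized DN map, and then subtract the explicit correction of Lemma \ref{lmm:DN-diff}.

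\emph{Step 1: the principal symbol.} Fix $p\in\Gamma$ and write
\begin{equation*}
 \mathcal{N}_m:=(D^1N_{\mathcal{J}_m})_s .
\end{equation*}
Restricted to data supported near $p$, $\mathcal{N}_m$ is a classical pseudodifferential operator of order one on $\Gamma$ with values in densities. I would compute its principal symbol by testing against $h_\tau=e^{i\tau x'\cdot\xi'}\chi(x')$. The corresponding solution is $u_\tau+r_\tau$ from Proposition \ref{Prop-boundary-quasimodes}, applied to the operator $L_s$; the remainder is $O(\tau^{-\infty})$ because the zero-Dirichlet problem is invertible at an admissible background.

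The leading term of $e^{-i\tau x'\cdot\xi'}\mathcal{N}_mh_\tau$ is
\begin{equation*}
 i\tau\,\nu_i D_m^{ij}(d\Phi_m)_j\,\chi, \qquad d\Phi_m|_{t=0}=\xi'+i|\xi'|_{g_m}\nu .
\end{equation*}
The symmetric part contributes a real multiple of $i\tau\,|\xi'|_{g_m}$ times the boundary density. The antisymmetric part contributes $i\tau\,\nu_i\varphi_m^{ij}\xi'_j$, because $\nu_i\varphi^{ij}\nu_j=0$.

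Equality of the two maps on $C_c^\infty(\Gamma)$ therefore gives two identities for all tangential $\xi'$. The imaginary-part terms give equality of $\sqrt{\det g_m}\,g_m^{ij}\nu_i\nu_j\,|\xi'|_{g_m}$, read off through its dependence on $\xi'$ together with the density normalization. Here I use $\gamma=\sqrt{\det g}\,g^{-1}$ and $n\geq3$ to convert the resulting $\xi'$-dependence into equality of the induced boundary metrics. The real terms linear in $\xi'$ give $\nu_i\varphi_1^{ij}=\nu_i\varphi_2^{ij}$ on $\Gamma$.

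\emph{Step 2: the zeroth-order term.} I would not use subprincipal symbols here. Since $\mathcal{H}_\Gamma$ contains the constants, I test with $h=1$. Translation covariance, via Lemma \ref{lmm:coclosed-magnetic-potential}, gives $w_m^1\equiv1$, so
\begin{equation*}
 \mathcal{N}_m1=\nu_i\Xi_m^i .
\end{equation*}
Hence $\nu_i\Xi_1^i=\nu_i\Xi_2^i$ on $\Gamma$.

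It remains to match $\nu_i\partial_k\varphi_m^{ki}$. Since $\varphi$ is antisymmetric, $\nu_i\partial_k\varphi^{ki}$ is the boundary exterior derivative of the $(n-2)$-form obtained by pulling back $\varphi\lrcorner dx^1\wedge\cdots\wedge dx^n$ to $\Gamma$. Concretely, the $(n-1)$-form $\partial_k\varphi^{ki}\,\partial_i\lrcorner\mathrm{vol}$ equals $d(\varphi\lrcorner\mathrm{vol})$ up to sign. Its pullback by $\iota$ is therefore $d_\Gamma$ applied to $\iota^*(\varphi\lrcorner\mathrm{vol})$. That pullback depends only on $\nu_i\varphi^{ij}$, which was already matched in Step 1. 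So $\nu_i(\Xi_m^i-\partial_k\varphi_m^{ki})$ agree on $\Gamma$.

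\emph{Step 3: conclusion.} By Lemma \ref{lmm:DN-diff}, each $\Lambda_{g_m,A_m,q_m}$ equals $\mathcal{N}_m$ minus the boundary operator
\begin{equation*}
 h\mapsto \nu_i\left(\varphi_m^{ij}\partial_jh+\tfrac12(\Xi_m^i-\partial_k\varphi_m^{ki})h\right).
\end{equation*}
The coefficients of this operator were shown to agree on $\Gamma$ in Steps 1 and 2. Hence $\Lambda_{g_1,A_1,q_1}h=\Lambda_{g_2,A_2,q_2}h$ on $\Gamma$ for all $h\in\mathcal{H}_\Gamma$.

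The main obstacle is Step 1, namely cleanly separating the metric contribution from the Hall contribution in the principal symbol. One must check that the symmetric and antisymmetric parts appear in independent (imaginary versus real, even versus odd in $\xi'$) components. One must also check that the density-valued normalization still recovers the full boundary metric, not just a conformal multiple of it; this is where $\det g=(\det\gamma)^{2/(n-2)}$ and $n\geq3$ enter.
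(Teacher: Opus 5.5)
Your proposal is correct and follows the paper's proof essentially step for step: boundary-layer extraction of the principal symbol, the constant datum $h=1$ for $\nu_i\Xi_m^i$, recovery of $\nu_i\partial_k\varphi_m^{ki}$ from tangential derivatives of the already matched $\nu_i\varphi_m^{ij}$, and then Lemma \ref{lmm:DN-diff}; your identity $\iota^*(\partial_k\varphi^{ki}\,\partial_i\lrcorner\mathrm{vol})=\pm d_\Gamma\,\iota^*(\varphi\lrcorner\mathrm{vol})$ is the coordinate-free form of the paper's $\nu_i\partial_k\varphi^{ki}=-(\partial_aV^a)\,d\sigma_0$. Two small corrections: in Step 1 you swapped the real and imaginary labels (the symmetric part gives the real term $\tau\sqrt{\det b_m}\,|\xi'|_{b_m}\,d\sigma_0$ and the Hall part gives the imaginary term $i\tau\nu_i\varphi_m^{ij}\xi'_j$, though the even/odd split separates them anyway), and $n\geq3$ is really needed when squaring the even part, since $K_m=(\det b_m)\,b_m^{-1}$ has $\det K_m=(\det b_m)^{n-2}$, rather than through the interior relation $\det g=(\det\gamma)^{2/(n-2)}$.
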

The result can follow from Lemma \ref{lmm:DN-diff} and the principal-symbol calculation in \cite[Lemma 8.6]{dossantosferreira2009limiting}, but we give the calculation below for completeness of the article.
\begin{proof}[Proof of Lemma \ref{lmm:bdr-1}]
Fix the same tangential coordinates $x'$ on a boundary coordinate neighborhood $U\subset\Gamma$. For each $m=1,2$, independently construct the inward $g_m$-boundary normal coordinates $(x',t_m)$. Thus
\begin{equation*}
 g_m=dt_m^2+h_m(x',t_m),\qquad H_m:=\det h_m.
\end{equation*}
All symbol comparisons below are made after pullback to the common tangential coordinates $x'$.

Let $d\sigma_0$ be the positive coordinate density on the boundary, and define the scalar components $V_m^a$ by $\nu_i\varphi_m^{ia}=V_m^a\,d\sigma_0$. The density-valued principal symbol is
\begin{equation}\label{eq:flux-principal-symbol}
 \sigma_1(D^1N_{\mathcal{J}_m})(x',\xi')
 =\left(\sqrt{H_m}\,|\xi'|_{h_m}+iV_m^a\xi_a\right)d\sigma_0.
\end{equation}
We include a boundary-layer calculation. Take a real cutoff $\chi$ in the common tangential chart and let $u_\tau=\chi e^{i\tau x'\cdot\xi'}$ on the boundary. Proposition \ref{Prop-boundary-quasimodes}, applied to the magnetic operator, gives exact solutions
\begin{equation*}
 w_{m,\tau}=e^{i\tau\Phi_m}a_{m,\tau}+r_{m,\tau},\qquad
 a_{m,\tau}|_{t_m=0}=\chi,\qquad
 d\Phi_m|_{t_m=0}=\xi'+i|\xi'|_{h_m}dt_m,
\end{equation*}
where $r_{m,\tau}|_{\partial M}=0$ and $r_{m,\tau}=O(\tau^{-\infty})$ in every fixed Sobolev norm. Since $t_m$ is inward, the symmetric principal part of the boundary current form is $-\sqrt{H_m}\,\partial_{t_m}w_{m,\tau}\,d\sigma_0$. The normal phase derivative therefore contributes $\tau\sqrt{H_m}|\xi'|_{h_m}u_\tau\,d\sigma_0$. The contribution of the antisymmetric current coefficient involves only tangential derivatives and contributes $i\tau V_m^a\xi_a u_\tau\,d\sigma_0$. Derivatives of amplitudes and the $\Xi_m$ term contribute only $O(1)$ to the pairing. Thus
\begin{equation*}
 \int_{\partial M}\overline{u_\tau}\,
 (D^1N_{\mathcal{J}_m})_{f_0}(u_\tau)
 =\tau\int_{\partial M}\chi^2
 \left(\sqrt{H_m}|\xi'|_{h_m}+iV_m^a\xi_a\right)d\sigma_0+O(1).
\end{equation*}
Localizing $\chi$ after multiplication by $\tau^{-1}$ and passage to the limit $\tau\to\infty$ gives \eqref{eq:flux-principal-symbol}. Writing the scalar coefficient of the symbol as $S_m(\xi')$, we obtain
\begin{equation*}
 \frac{S_m(\xi')+S_m(-\xi')}{2}=\sqrt{H_m}|\xi'|_{h_m},\qquad
 \frac{S_m(\xi')-S_m(-\xi')}{2i}=V_m^a\xi_a.
\end{equation*}
Squaring the even part determines
\begin{equation*}
 K_m^{ab}:=H_mh_m^{ab}.
\end{equation*}
Since the boundary dimension is $n-1$ and $n\geq3$, we know that $ \det K_m=H_m^{n-2} $, or equivalently $ H_m=(\det K_m)^{\frac{1}{n-2}} $.
Consequently, we can determine the boundary metric by
\begin{equation*}
 h_m^{ab}=(\det K_m)^{-\frac{1}{n-2}}K_m^{ab}.
\end{equation*}

Meanwhile, the odd part of \eqref{eq:flux-principal-symbol} determines $V_m$. Lemma \ref{lmm:coclosed-magnetic-potential} gives $w_m^1=1$, and hence $(D^1N_{\mathcal{J}_m})_s(1)|_\Gamma=\partial_sN_{\mathcal{J}_m}^\Gamma(s,0)=\nu_i\Xi_m^i|_\Gamma$. In boundary normal coordinates, antisymmetry gives $\varphi_m^{nn}=0$, so $\partial_k\varphi_m^{kn}=\partial_a\varphi_m^{an}$ on the boundary. The components of the normal density are constant in these coordinates, and hence
\begin{equation*}
 \nu_i\partial_k\varphi_m^{ki}=-(\partial_aV_m^a)d\sigma_0.
\end{equation*}
It follows that the zeroth-order coefficient in the correction operator is
\begin{equation*}
 \frac{1}{2}\nu_i(\Xi_m^i-\partial_k\varphi_m^{ki})
 =\frac{1}{2}\left(\nu_i\Xi_m^i+(\partial_aV_m^a)d\sigma_0\right),
\end{equation*}
whose two terms have already been recovered from the constant datum and the odd part of the principal symbol, respectively. The already recovered tangential field $V_m$, including its tangential derivatives, therefore determines the whole correction operator in \eqref{eq:DN-difference}. Equality of the original first linearized DN maps implies equality of these correction operators and then equality of the magnetic Schr\"odinger DN maps.
\end{proof}

\subsection{Analytic globalization from the boundary jets}
\label{subsec:analytic-extension}

We prove Theorem \ref{thm:magnetic-recovery}. We first remove scalar conjugation using the constant solution. Solution coordinates then turn the formal boundary identification into an analytic collar map, which extends to the interior via Poisson embeddings. Finally, coclosedness of the drift and Cauchy uniqueness at $\Gamma$ remove the remaining conformal factor.

Write $\mathcal{P}_m=\mathcal{P}_{g_m,A_m,q_m}$. The assumptions $A_m=-ia_m$, $d_{g_m}^\star a_m=0$, and $q_m=|a_m|_{g_m}^2$ give
\begin{equation*}
 \mathcal{P}_m=-\Delta_{g_m}-2\langle a_m,d\,\cdot\,\rangle_{g_m}.
\end{equation*}
The maximum principle and classical Dirichlet theory make the zero-Dirichlet realization invertible, as in the proof of Lemma \ref{lmm:forward}. Let $w_m^h$ be its Dirichlet solution with boundary value $h$. In particular, $w_m^1\equiv1$.

The common magnetic DN map also determines the boundary metric on $\Gamma$. Indeed, in boundary coordinates with coordinate density $d\sigma_0$, its principal symbol is $\sqrt{\det b_m}\,|\xi'|_{b_m}\,d\sigma_0$, where $b_m$ is the induced boundary metric. Squaring the scalar symbol determines $K_m=(\det b_m)b_m^{-1}$. Since $\det K_m=(\det b_m)^{n-2}$ and $n\geq3$, we obtain $b_1=b_2$. Lemma \ref{lmm:magnetic-boundary-jets} therefore applies. Retain its notation $\chi_m,\mathcal{C},\kappa_0,\lambda_0,U_m^h$ for the auxiliary smooth collars and gauges.

Because $U_m^1\equiv1$, taking $h=1$ in \eqref{eq:formal-solution-package} gives $\kappa_0-1=O(t^\infty)$. All derivatives of $\kappa_0-1$ vanish at the boundary, so the product rule shows that conjugation by $\kappa_0$ leaves every coefficient jet unchanged. Therefore the boundary identities \eqref{eq:formal-solution-package} reduce to
\begin{equation}\label{eq:unital-formal-package}
 \mathcal{P}_2^{\mathcal{C}}
 -\lambda_0\mathcal{P}_1^{\mathcal{C}}=O(t^\infty),
 \qquad U_2^h-U_1^h=O(t^\infty).
\end{equation}
Here the constant solution removes the scalar conjugation from the boundary jets. The passage to an actual collar identification follows the $Z$-coordinate argument of \cite[Propositions~3.2--3.5 and Lemma~3.6]{LassasMatti2022TCpf} and is stated in the following lemma. That argument concerns the conformal Laplacian, while we give its operator version below, including the real drift and the correspondence of Dirichlet solutions with data in $\mathcal{H}_\Gamma$.

\begin{lemma}[From formal boundary jets to an analytic collar]
\label{lmm:formal-to-collar}
Under the hypotheses of Theorem \ref{thm:magnetic-recovery}, let $\mathcal{P}_m=\mathcal{P}_{g_m,A_m,q_m}$. For every $p\in\Gamma$ there are neighborhoods $V_m$ of $p$ in $M$, an orientation-preserving boundary-fixing real analytic diffeomorphism $\Phi_{\mathrm{col}}:V_2\to V_1$, and a positive real analytic function $\lambda$ on $V_2$ such that
\begin{equation}\label{eq:collar-operator-identity}
 \mathcal{P}_2
 =\lambda\,\Phi_{\mathrm{col}}^*\mathcal{P}_1,\qquad
 \lambda>0,\qquad
 \lambda|_{V_2\cap\partial M}=1,\quad d\lambda|_{V_2\cap\partial M}=0,
\end{equation}
and
\begin{equation}\label{eq:collar-diffeomorphism}
 w_2^h=w_1^h\circ\Phi_{\mathrm{col}}
 \quad\text{on }V_2
 \quad\text{for every }h\in\mathcal{H}_\Gamma.
\end{equation}
\end{lemma}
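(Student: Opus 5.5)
We follow the $Z$-coordinate construction of \cite[Propositions~3.2--3.5]{LassasMatti2022TCpf}, using the unital formal package \eqref{eq:unital-formal-package} in place of the conformal-Laplacian jets. Fix $p\in\Gamma$. By Lemma \ref{lmm:Runge-jets}, applied to $\mathcal{P}_1$ with analytic coefficients, choose $h_1,\ldots,h_n\in C_c^\infty(\Gamma)$ so that $Z_1=(w_1^{h_1},\ldots,w_1^{h_n})$ has linearly independent differentials at an interior point near $p$. We want independence at $p$ itself, so we choose the $h_j$ so that near $p$ they restrict to tangential coordinates, say $h_a=x^a$ for $a\le n-1$. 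We also arrange $\partial_\nu w_1^{h_n}(p)\neq0$, for instance by taking $h_n$ to be a small bump near $p$ and using the principal symbol of the DN map. Then $dZ_1(p)$ is invertible.

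By \eqref{eq:unital-formal-package}, $U_2^{h_j}-U_1^{h_j}=O(t^\infty)$ in the collar. Hence $Z_2=(w_2^{h_1},\ldots,w_2^{h_n})$ has the same boundary jet as $Z_1$ after pullback by $\chi_m$, and $dZ_2(p)$ is invertible as well. Lemma \ref{lmm:analytic-boundary} shows that each $w_m^{h_j}$ extends analytically across $\Gamma$ near $p$, because the traces $h_j$ are analytic (indeed affine) near $p$. The inverse function theorem then makes $Z_m$ real analytic charts on neighborhoods $V_m$ of $p$. We define
\[
\Phi_{\mathrm{col}}:=Z_1^{-1}\circ Z_2:V_2\to V_1 .
\]
This map fixes the boundary near $p$, since both charts restrict to the same boundary functions and both map the boundary onto $\{Z^n=h_n\}$. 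It preserves orientation because it agrees to infinite order with $\chi_1\circ\chi_2^{-1}$ at $t=0$.

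In $Z$-coordinates, write the operators $\widetilde{\mathcal{P}}_m:=(Z_m^{-1})^*\mathcal{P}_m$. Their coefficients are determined by the coordinate solutions: $\widetilde{\mathcal{P}}_m$ annihilates $1$ and every coordinate $z^j$. So $\widetilde{\mathcal{P}}_m$ has no zeroth-order term and no first-order drift beyond what is fixed by the principal part, and is $-\widetilde g_m^{jk}\partial_j\partial_k$ in these coordinates. The jet identities give $\widetilde{\mathcal{P}}_2-\widetilde\lambda\,\widetilde{\mathcal{P}}_1=O(t^\infty)$. The main obstacle is upgrading this to exact proportionality $\widetilde g_2^{jk}=\widetilde\lambda\,\widetilde g_1^{jk}$. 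I would argue as in \cite[Prop.~3.4--3.5]{LassasMatti2022TCpf}. Take additional Runge solutions $w^{h}$ with $h\in C_c^\infty(\Gamma)$ whose Hessians in $Z$-coordinates span the symmetric matrices at $p$, which is possible by a two-jet version of Lemma \ref{lmm:Runge-jets}. Their images $v_m:=w_m^h\circ Z_m^{-1}$ are analytic functions. The differences $v_2-v_1$ solve elliptic equations with analytic coefficients and vanishing Cauchy data on the flat boundary piece. The formal package gives infinite-order vanishing there, and analyticity forces $v_2=v_1$ near $Z(p)$. Thus $w_2^h=w_1^h\circ\Phi_{\mathrm{col}}$ for these $h$. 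Evaluating both operators on the common solutions shows that $\widetilde{\mathcal{P}}_2$ and $\widetilde{\mathcal{P}}_1$ have proportional principal symbols. The factor $\lambda:=\widetilde g_2^{11}/\widetilde g_1^{11}$ is then analytic and positive. It satisfies $\lambda|_{\partial M}=1$ and $d\lambda|_{\partial M}=0$ because its jet coincides with that of $\lambda_0$, by \eqref{eq:formal-boundary-normalization}. This gives \eqref{eq:collar-operator-identity}.

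Finally, for a general $h\in\mathcal{H}_\Gamma$, the function $w_1^h\circ\Phi_{\mathrm{col}}$ solves $\mathcal{P}_2v=0$ on $V_2$ by \eqref{eq:collar-operator-identity}. Its trace on $V_2\cap\partial M$ is the same as that of $w_2^h$, and its normal derivatives agree to infinite order by \eqref{eq:unital-formal-package}. After the local analytic extension of Lemma \ref{lmm:analytic-boundary}, which is applicable where $h$ is analytic and otherwise replaced by Holmgren/unique continuation for the Cauchy problem across the boundary, the difference $w_2^h-w_1^h\circ\Phi_{\mathrm{col}}$ has zero Cauchy data on $V_2\cap\Gamma$. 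Unique continuation then gives \eqref{eq:collar-diffeomorphism} on the connected neighborhood $V_2$, after shrinking it if necessary.
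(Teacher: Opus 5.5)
Your architecture matches the paper's. You use solution coordinates $Z_m$ with $w_m^1\equiv1$, analytic up to $\Gamma$ by Lemma \ref{lmm:analytic-boundary}, and set $\Phi_{\mathrm{col}}=Z_1^{-1}\circ Z_2$. You transfer \eqref{eq:unital-formal-package} to an $O(z_n^\infty)$ proportionality in $Z$-coordinates and finish with zero Cauchy data plus Holmgren. Two of your steps are valid variants of the paper's: the orientation argument via infinite-order agreement with $\chi_1\circ\chi_2^{-1}$, and deriving zero Cauchy data directly from the formal package (the paper uses a conformal conormal computation and equality of the DN maps).

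The step you call the main obstacle is argued incorrectly. Every solution of $\widetilde{\mathcal P}_1v=-\widetilde g_1^{jk}\partial_j\partial_kv=0$ has its Hessian in the hyperplane $\{H:\widetilde g_1^{jk}H_{jk}=0\}$. So no family of solutions can have Hessians spanning the symmetric matrices; if they did, $\widetilde g_1$ would vanish. Also, Lemma \ref{lmm:Runge-jets} only realizes one-jets at interior points. The two-jet Runge statement at the boundary point $p$ is therefore not available. Even the corrected version (spanning that hyperplane) would be a new result you would have to prove.

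None of this is needed, and you already have the ingredients. In $Z$-coordinates, $\widetilde{\mathcal P}_m=-\widetilde g_m^{jk}\partial_j\partial_k$ has real analytic coefficients near $Z_1(p)=Z_2(p)$, up to and across the analytic image of $\partial M$. So take the analytic positive function $\lambda=\widetilde g_2^{11}/\widetilde g_1^{11}$ (the paper uses $\frac1n\operatorname{tr}(G_2G_1^{-1})$). From $\widetilde g_2-\widetilde\lambda\,\widetilde g_1=O(z_n^\infty)$ you get $\lambda-\widetilde\lambda=O(z_n^\infty)$. Hence every coefficient of $\widetilde{\mathcal P}_2-\lambda\widetilde{\mathcal P}_1$ is analytic with vanishing full jet along the boundary, and so vanishes identically near that point. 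This is exactly how the paper closes the step. Exact proportionality holds with this analytic $\lambda$, not with the merely smooth $\widetilde\lambda=\lambda_{0,Z}$, so do not aim for $\widetilde g_2^{jk}=\widetilde\lambda\,\widetilde g_1^{jk}$.

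A smaller point: your choice of $h_n$ via ``a small bump and the principal symbol of the DN map'' is not justified. The paper takes $0\leq h_n\in C_c^\infty(\Gamma\setminus\overline{U}_0)$ vanishing near $p$. Then Hopf's lemma gives $\partial_{\nu_{\mathrm{in}}}w_m^{h_n}(p)>0$ for both $m$, which yields invertibility of $dZ_m(p)$, and $h_n$ is trivially analytic near $p$.
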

\begin{proof}
Lemma \ref{lmm:magnetic-boundary-jets} allows us to use \eqref{eq:unital-formal-package} and \eqref{eq:formal-boundary-normalization}. Following \cite[Proposition~3.2]{LassasMatti2022TCpf}, we choose analytic coordinates $y=(y^1,\ldots,y^{n-1})$ on a boundary neighborhood $U\Subset\Gamma$ of $p$ and construct coordinates from Dirichlet solutions. The denominator in that construction is $w_m^1\equiv1$ here. The auxiliary normalized collars need not be analytic, so their Taylor identities must first be transferred to these solution coordinates.

Choose a smaller neighborhood $U_0\Subset U$ of $p$ and boundary data $h_a\in C_c^\infty(\Gamma)$, $1\leq a\leq n-1$, equal to $y^a$ on $U_0$. Choose also $0\leq h_n\in C_c^\infty(\Gamma\setminus\overline{U}_0)$, not identically zero. For these real drift operators, the strong maximum principle and the Hopf boundary lemma \cite[Theorem~3.5 and Lemma~3.4]{gilbarg2001elliptic} give
\begin{equation*}
 w_m^{h_n}>0\quad\text{in }M^\circ,\qquad
 \partial_{\nu_{\mathrm{in}}}w_m^{h_n}>0
 \quad\text{at the chosen boundary point}.
\end{equation*}
Lemma \ref{lmm:analytic-boundary} applies because all the boundary data just chosen are analytic near the point. It follows that
\begin{equation*}
 Z_m=(w_m^{h_1},\ldots,w_m^{h_n})
\end{equation*}
is a real analytic coordinate map up to the boundary. The common tangential block and the two positive inward derivatives show that $\operatorname{sgn}\det dZ_1=\operatorname{sgn}\det dZ_2$. Hence $Z_1^{-1}\circ Z_2$ preserves orientation.

For a diffeomorphism $\Psi:U\to V$ and a differential operator $P$ on $U$, we use the operator push-forward
\begin{equation}\label{eq:operator-pushforward-definition}
 (\Psi_*P)v:=((\Psi^{-1})^* P)v=\bigl[P(v\circ\Psi)\bigr]\circ\Psi^{-1},
 \qquad v\in C^\infty(V).
\end{equation}
Thus $\Psi_*P$ is the expression of $P$ in the $\Psi$ coordinates.

Recall the boundary-fixing collar map $ \chi_m:\mathcal{C}\to M $ defined in Lemma \ref{lmm:magnetic-boundary-jets}. Put $\widehat{Z}_m :=Z_m\circ\chi_m$. These expressions on the auxiliary collar are only smooth, but the operator
\begin{equation*}
    \mathcal{P}_{m,Z}:=(\widehat{Z}_m)_*(\mathcal{P}_m^{\mathcal{C}})=(Z_m\circ\chi_m)_*(\chi_m^*\mathcal{P}_m)=(Z_m)_*\mathcal{P}_m
\end{equation*}
is analytic, since both $Z_m$ and $\mathcal{P}_m$ are analytic up to the boundary. Write $z=(z_1,\ldots,z_n)$ for the solution coordinates, in which the common boundary is $z_n=0$. Analyticity in these coordinates allows the formal comparison to become exact.

The second relation in \eqref{eq:unital-formal-package} says $\widehat{Z}_1-\widehat{Z}_2=O(t^\infty)$. In particular, the inverse jets of these two coordinate maps agree. As in the jet-transfer argument of \cite[Proposition~3.5]{LassasMatti2022TCpf}, a second-order operator transformation uses only finitely many derivatives of the coordinate map and its inverse. The equality of the coordinate jets therefore gives
\begin{equation*}
 (\widehat{Z}_2)_*\mathcal{P}_1^{\mathcal{C}}
 -(\widehat{Z}_1)_*\mathcal{P}_1^{\mathcal{C}}=O(z_n^\infty).
\end{equation*}
Define $\lambda_{0,Z}:=(\widehat{Z}_2)_*\lambda_0=\lambda_0\circ\widehat{Z}_2^{-1}$, where the push-forward of a scalar function is its composition with the inverse map. Pushing forward the first relation in \eqref{eq:unital-formal-package} by $\widehat{Z}_2$ and using the preceding identity gives
\begin{equation}\label{eq:operator-conformal}
\mathcal{P}_{2,Z}-\lambda_{0,Z}\mathcal{P}_{1,Z}=(\widehat{Z}_2)_*(\mathcal{P}_2^{\mathcal{C}}-\lambda_0\mathcal{P}_1^{\mathcal{C}})+\lambda_{0,Z}\left((\widehat{Z}_2)_*\mathcal{P}_1^{\mathcal{C}}-(\widehat{Z}_1)_*\mathcal{P}_1^{\mathcal{C}}\right)=O(z_n^\infty).
\end{equation}
Let $G_m$ be the principal coefficient matrix of $\mathcal{P}_{m,Z}$ and define the analytic function
\begin{equation*}
 \lambda:=\frac{1}{n}\operatorname{tr}(G_2G_1^{-1}).
\end{equation*}
Then $\lambda-\lambda_{0,Z}=O(z_n^\infty)$ by \eqref{eq:operator-conformal}, while every coefficient of $\mathcal{P}_{2,Z}-\lambda\mathcal{P}_{1,Z}$ is analytic and has vanishing full boundary jet. Each coefficient therefore has a convergent power series equal to zero at the boundary, and hence, by analytic continuation in a collar neighborhood, we obtain the exact operator identity
\begin{equation*}
    \mathcal{P}_{2,Z}-\lambda\mathcal{P}_{1,Z}\equiv 0.
\end{equation*}
Define the boundary-fixing analytic diffeomorphism
\begin{equation*}
 \Phi_{\mathrm{col}}:=Z_1^{-1}\circ Z_2.
\end{equation*}
Applying $Z_2^*$ to the operator identity gives \eqref{eq:collar-operator-identity}, with $\lambda\circ Z_2$ denoted again by $\lambda$. Its boundary values $\lambda=1$ and $d\lambda=0$ follow from the corresponding jets in \eqref{eq:formal-boundary-normalization}. In particular, $\lambda>0$ after shrinking the collar.

To identify the Dirichlet solutions with data in $\mathcal{H}_\Gamma$, we match their Cauchy data as in the local Poisson comparison of \cite[Lemma~3.1]{lassas2020poisson}. The magnetic conormal calculation for our operator is as follows. Put $\widehat{g}_1=\Phi_{\mathrm{col}}^* g_1$ and $\widehat{a}_1=\Phi_{\mathrm{col}}^*(iA_1)$. Comparing the principal and first-order coefficients in \eqref{eq:collar-operator-identity} gives
\begin{equation*}
 g_2=\lambda^{-1}\widehat{g}_1,\qquad
 iA_2=\widehat{a}_1+d\omega,\qquad
 \omega=\frac{n-2}{4}\log\lambda .
\end{equation*}
Let $\widehat{u}=w_1^h\circ\Phi_{\mathrm{col}}$, and denote the density-valued magnetic conormal trace by $\mathcal{N}_{\mathcal{P}}$. The conformal relations give
\begin{equation*}
 \mathcal{N}_{\mathcal{P}_2}(\widehat{u})
 =\iota^*\!\left[
 \lambda^{1-n/2}
 (d\widehat{u}+\widehat{a}_1\widehat{u}+d\omega\,\widehat{u})
 ^{\sharp_{\widehat{g}_1}}\lrcorner d\operatorname{Vol}_{\widehat{g}_1}
 \right].
\end{equation*}
Since $\lambda=1$ and $d\lambda=0$ on the boundary, and $\Phi_{\mathrm{col}}|_{V_2\cap\partial M}=\operatorname{Id}$, its boundary value equals the pullback of $\mathcal{N}_{\mathcal{P}_1}(w_1^h)$. Equality of the magnetic Schr\"odinger maps therefore gives equal Cauchy data for $w_2^h$ and $\widehat{u}$. Both solve $\mathcal{P}_2v=0$ by \eqref{eq:collar-operator-identity}. The analytic boundary is noncharacteristic for this elliptic operator, so Holmgren uniqueness proves $ w_2^h = \widehat{u} = w_1^h\circ\Phi_{\mathrm{col}} $, namely \eqref{eq:collar-diffeomorphism}. This argument applies to all boundary data in $\mathcal{H}_\Gamma$. Only the finitely many coordinate-generating traces needed analytic boundary regularity.
\end{proof}

The operators $\mathcal{P}_m$ satisfy the hypotheses of Lemma \ref{lmm:compactified-poisson}, so we can apply the embedding and global extension results from Subsection \ref{subsec:poisson-tools}.

\begin{proof}[Proof of Theorem \ref{thm:magnetic-recovery}]
Lemma \ref{lmm:formal-to-collar} supplies orientation-preserving local identifications that preserve the Dirichlet solutions with data in $\mathcal{H}_\Gamma$. Lemma \ref{lmm:poisson-continuation} extends them to a unique orientation-preserving $\Gamma$-fixing real analytic diffeomorphism $\Phi:M^\circ\cup\Gamma\to M^\circ\cup\Gamma$. This proves the solution identity first for data in $C_c^\infty(\Gamma)$ and then for all $h\in\mathcal{H}_\Gamma$ by linearity and $w_m^1=1$.

It remains to prove that the factor in \eqref{eq:collar-operator-identity} is identically one. The assumed coclosedness of the drift, together with the boundary normalization on $\Gamma$, gives this conclusion through Cauchy uniqueness. Write
\begin{equation*}
 \widehat{\mathcal{P}}_1=\Phi^*\mathcal{P}_1,\qquad
 \widehat{g}_1=\Phi^* g_1,\qquad
 \widehat{a}_1=\Phi^*(iA_1).
\end{equation*}
The two principal symbols define bundle isomorphisms $T^* M\to TM$. Their composition therefore has an intrinsic trace. Writing their matrices in any coordinates as $G_2,\widehat{G}_1$, define the analytic positive function
\begin{equation*}
 c=\frac{1}{n}\operatorname{tr}(G_2\widehat{G}_1^{-1}).
\end{equation*}
It is coordinate independent. Because $\Phi$ agrees with the local maps near $\Gamma$, $c$ equals the corresponding collar factor $\lambda$ there. Thus $c=1$ and $dc=0$ on $\Gamma$. Every coefficient of $\mathcal{P}_2-c\widehat{\mathcal{P}}_1$ vanishes on a nonempty collar neighborhood, so analytic continuation on the connected interior gives
\begin{equation}\label{eq:global-operator-factor}
 \mathcal{P}_2=c\widehat{\mathcal{P}}_1\quad\text{on }M^\circ,\qquad
 c|_\Gamma=1,\qquad dc|_\Gamma=0.
\end{equation}
Comparison of its principal and first-order coefficients gives
\begin{equation}\label{eq:global-conformal-drift}
 g_2=c^{-1}\widehat{g}_1,\qquad
 iA_2=\widehat{a}_1+d\omega,\qquad
 \omega=\frac{n-2}{4}\log c.
\end{equation}
For reference, comparison of the zeroth-order coefficient also gives
\begin{equation*}
 q_2=c\left(
 \Phi^* q_1-c^{\frac{n-2}{4}}
 \Delta_{\widehat{g}_1}(c^{\frac{2-n}{4}})
 \right).
\end{equation*}

The remaining factor is constrained by more than equality of solution spaces: both drift one-forms are coclosed, meaning that their codifferentials vanish. By hypothesis and invariance under pullback, $d_{g_2}^\star(iA_2)=d_{\widehat{g}_1}^\star\widehat{a}_1=0$. For a one-form $V$, the conformal relation in \eqref{eq:global-conformal-drift} gives
\begin{equation*}
 c^{-1}d_{g_2}^\star V
 =d_{\widehat{g}_1}^\star V
  +2\langle d\omega,V\rangle_{\widehat{g}_1}.
\end{equation*}
Apply this to $V=iA_2=\widehat{a}_1+d\omega$. Since $d^\star d=-\Delta$, one obtains
\begin{equation*}
 -\Delta_{\widehat{g}_1}\omega
 +2\langle d\omega,\widehat{a}_1+d\omega\rangle_{\widehat{g}_1}=0,
 \qquad \omega|_\Gamma=0,\qquad d\omega|_\Gamma=0.
\end{equation*}
The substitution $z=e^{-2\omega}$ absorbs the quadratic gradient term. It satisfies the linear equation
\begin{equation*}
 \Delta_{\widehat{g}_1}z
 -2\langle\widehat{a}_1,dz\rangle_{\widehat{g}_1}=0,\qquad
 z|_\Gamma=1,\qquad dz|_\Gamma=0.
\end{equation*}
Holmgren uniqueness on the noncharacteristic analytic boundary $\Gamma$ gives $z=1$ near $\Gamma$. Analytic continuation in $M^\circ$ then gives $z\equiv1$, so $\omega=0$ and, since $n\geq3$, $c=1$. Equations \eqref{eq:global-operator-factor} and \eqref{eq:global-conformal-drift} then give the asserted equality of triples.

The resulting interior isometry extends uniquely to the metric completions, namely the compact manifolds $M$. This extension is analytic up to every boundary component. Indeed, in sufficiently small boundary normal collars, write $E_m(p,t)=\exp_p^{g_m}(tN_m)$, where $N_m$ is the inward unit normal, and let $\pi_m$ be the nearest-boundary projection. Preservation of distance and uniqueness of the short minimizing normal geodesic give $\Phi(E_2(p,t))=E_1(F(p),t)$, where $F=\Phi|_{\partial M}$. For a fixed small $t_0>0$, the formula $F=\pi_1\circ\Phi\circ E_2(\cdot,t_0)$ proves analyticity of $F$, and the collar formula proves analyticity of $\Phi$ at the boundary. The inverse is treated in the same way. The extension fixes $\Gamma$, and the equality of triples holds on all of $M$ by continuity.

Finally, if $\widetilde{\Phi}$ is another $\Gamma$-fixing diffeomorphism identifying the triples, covariance and Dirichlet uniqueness give $w_2^h=w_1^h\circ\widetilde{\Phi}$ for $h\in\mathcal{H}_\Gamma$. The boundary data are unchanged because they are constant on $\partial M\setminus\Gamma$. Injectivity of the local Poisson embedding then gives $\widetilde{\Phi}=\Phi$ in the interior, hence on $M$ by continuity.
\end{proof}

\subsection{Recovery of \texorpdfstring{$Q_1\mathcal{J}$}{Q1 J} after the global identification}
\label{subsec:first-order-recovery}

We first show that the identifications supplied by Theorem \ref{thm:magnetic-recovery} agree across constant backgrounds.

\begin{proposition}[Identification independent of the background]\label{prop:analytic-continuation}
Let the two currents on the manifold of Theorem \ref{thm:main} satisfy \eqref{condition:regularity}, \eqref{condition:elliptic}, and \eqref{condition:constant-solutions}, and suppose every constant background is admissible in Definition \ref{def:admissible-background}. Write $(g_{m,s},A_{m,s},q_{m,s})$ for the induced triples \eqref{eq:gamma-metric}, \eqref{eq:def-A}, and \eqref{eq:1st-order-m}. At each constant background $s$ for which $(D^1N_{\mathcal{J}_1})_sh=(D^1N_{\mathcal{J}_2})_sh$ on $\Gamma$ for every $h\in\mathcal{H}_\Gamma$, there is an orientation-preserving $\Gamma$-fixing real analytic diffeomorphism $\Phi_s:M\to M$ such that
\begin{equation}\label{eq:global-solution-identity}
 w_{2,s}^h=w_{1,s}^h\circ\Phi_s
 \quad\text{for every }h\in\mathcal{H}_\Gamma,
\end{equation}
and
\begin{equation}\label{eq:global-triple-identity}
 g_{2,s}=\Phi_s^* g_{1,s},\qquad
 A_{2,s}=\Phi_s^* A_{1,s},\qquad
 q_{2,s}=\Phi_s^* q_{1,s}.
\end{equation}
For each $m$, the first linearized Dirichlet solution map $h\mapsto w_{m,s}^h$ is independent of $s$. Consequently the maps $\Phi_s$ agree for all backgrounds where the first linearized DN maps coincide. Write $\Phi$ for this common map. 
\end{proposition}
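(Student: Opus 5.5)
The plan is to combine three earlier results at each fixed $s$: Lemma \ref{lmm:bdr-1}, Lemma \ref{lmm:coclosed-magnetic-potential} and Theorem \ref{thm:magnetic-recovery}. Background independence will then come from translation covariance.

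\emph{Step 1: the identification at a fixed $s$.} Fix $s$ with equal first linearized DN maps on $\mathcal{H}_\Gamma$. The constant background $s$ is admissible, and condition \eqref{condition:regularity} makes $D_s$ and $B_s$ real analytic in $x$. Hence the induced $g_{m,s}$ and $A_{m,s}$ are real analytic on $M$. By Lemma \ref{lmm:coclosed-magnetic-potential}, $a_{m,s}=iA_{m,s}$ is real and coclosed, and $q_{m,s}=|a_{m,s}|^2_{g_{m,s}}$. Lemma \ref{lmm:bdr-1} converts equality of $(D^1N_{\mathcal{J}_m})_s$ into equality of the magnetic maps $\Lambda_{g_{m,s},A_{m,s},q_{m,s}}$ on $\Gamma$ over $\mathcal{H}_\Gamma$. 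Theorem \ref{thm:magnetic-recovery} then gives a unique orientation-preserving $\Gamma$-fixing analytic $\Phi_s$ satisfying \eqref{eq:global-triple-identity}.

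The solution identity \eqref{eq:global-solution-identity} needs one remark. The theorem is stated for the magnetic solutions $\mathcal{P}w=0$. Equation \eqref{eq:1st-order-g} shows that $\mathcal{P}_{m,s}=-(\det g)^{-1/2}L_{m,s}$, so the Dirichlet solutions of $L_{m,s}$ and of $\mathcal{P}_{m,s}$ coincide. The identity therefore transfers directly to $w^h_{m,s}$.

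\emph{Step 2: independence of $s$.} Fix $h\in C_c^\infty(\Gamma)$ and $s_0$. Translation covariance \eqref{condition:elliptic} gives
\begin{equation*}
u^{s+\epsilon h}=u^{s_0+(s-s_0)+\epsilon h}=u^{s_0+\epsilon h}+(s-s_0)
\end{equation*}
for $|s-s_0|$ and $|\epsilon|$ small. Differentiating in $\epsilon$ at $\epsilon=0$ yields $w^h_{m,s}=w^h_{m,s_0}$ locally in $s$. For constant data, $w^1_{m,s}\equiv1$ for every $s$. Both statements are local, and $\mathbb{R}$ is connected, so the map $h\mapsto w^h_{m,s}$ is independent of $s$ on all of $\mathcal{H}_\Gamma$.

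This step contains the one delicate point. Lemma \ref{lmm:forward} identifies $w^h_{m,s}$ as the $\epsilon$-derivative of the locally unique solution map $S$. We must check that translation covariance is an identity between these same locally unique branches. It is, because both sides lie in the neighborhood $V$ and $\delta(s)<\delta'(s)$. As a consequence, the operators $L_{m,s}$ may still depend on $s$, but their solution spaces with data in $\mathcal{H}_\Gamma$ do not.

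\emph{Step 3: agreement of the $\Phi_s$.} Take two backgrounds $s,s'$ where the DN maps coincide. Step 2 gives $w^h_{2,s}\circ\Phi_s^{-1}=w^h_{1,s}=w^h_{1,s'}=w^h_{2,s'}\circ\Phi_{s'}^{-1}$ for all $h\in C_c^\infty(\Gamma)$. Evaluating at a point is exactly the local Poisson embedding. So $\mathscr{P}^\Gamma_{L_{1}}\circ\Phi_s=\mathscr{P}^\Gamma_{L_{1}}\circ\Phi_{s'}$ on $M^\circ$, where the operator index no longer depends on the background. By Lemma \ref{lmm:compactified-poisson} this embedding is injective on $M^\circ\cup\Gamma$, and the cited lemma applies because $L1=0$ and the coefficients are analytic. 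Hence $\Phi_s=\Phi_{s'}$ in the interior, and on $M$ by continuity.

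Alternatively, $\Phi_{s'}$ identifies the triples at $s$ as well: the solution spaces coincide, and the operator of $s$ is determined by its solution space via Runge density. The uniqueness clause of Theorem \ref{thm:magnetic-recovery} then gives the same conclusion. I would use the Poisson-injectivity argument, since it avoids that detour.
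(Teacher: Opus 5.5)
Your proposal is correct and follows the paper's proof essentially step for step. Lemmas \ref{lmm:coclosed-magnetic-potential} and \ref{lmm:bdr-1} feed Theorem \ref{thm:magnetic-recovery} at each $s$; translation covariance, together with local uniqueness of the branches from Lemma \ref{lmm:forward}, makes $w^h_{m,s}$ independent of $s$ (your argument works verbatim for every $h\in C^\infty(\partial M)$, which is how the paper states it); and injectivity of the resulting $s$-independent local Poisson embedding forces $\Phi_s=\Phi_{s'}$.
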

\begin{proof}
At each background where the first linearized DN maps coincide, Lemmas \ref{lmm:coclosed-magnetic-potential} and \ref{lmm:bdr-1} verify the hypotheses of Theorem \ref{thm:magnetic-recovery} for the induced triples. That theorem gives \eqref{eq:global-solution-identity} and \eqref{eq:global-triple-identity}.

Fix $s_0\in\mathbb{R}$ and $h\in C^\infty(\partial M)$. For $s$ sufficiently close to $s_0$ and $|\epsilon|$ sufficiently small, the boundary data $s_0+\epsilon h$ and the shift $s-s_0$ satisfy \eqref{condition:elliptic}. Translation covariance gives, for $m=1,2$,
\begin{equation*}
 u_m^{s+\epsilon h}
 =u_m^{s_0+\epsilon h}+(s-s_0).
\end{equation*}
The local branches agree on overlaps by the uniqueness in Lemma \ref{lmm:forward}, since their distinguished solutions are the same constant function.

Differentiating in $\epsilon$ at zero, using the smooth dependence proved there, yields $w_{m,s}^h=w_{m,s_0}^h$. Thus $s\mapsto w_{m,s}^h$ is locally constant for every $h$. Every constant background is admissible, so connectedness of $\mathbb{R}$ makes it constant globally. No uniform admissibility radius in $s$ is needed.

The local Poisson embeddings are therefore independent of $s$. Denote them by $\mathscr{P}_m^\Gamma$. For any two backgrounds $s,s_0$ at which the first linearized DN maps coincide, \eqref{eq:global-solution-identity} gives
\begin{equation*}
 \mathscr{P}_1^\Gamma\circ\Phi_s
 =\mathscr{P}_2^\Gamma
 =\mathscr{P}_1^\Gamma\circ\Phi_{s_0}.
\end{equation*}
Injectivity of $\mathscr{P}_1^\Gamma$ implies $\Phi_s=\Phi_{s_0}$ in the interior, hence on $M$ by continuity. This argument requires no differentiability assumption on the family $\Phi_s$.
\end{proof}

\begin{remark}[Dependence of the identification on the background]\label{rem:background-identification}
The proof that $\Phi_s$ is independent of $s$ uses translation covariance at perturbed boundary data $s+\epsilon h$. The weaker relation $u^{s+\epsilon}=s+u^\epsilon$ for constant boundary data is insufficient: Appendix \ref{app:background-dependent-gauge} constructs currents with identical nonlinear DN maps and a nonconstant family $\Phi_s$. The transformed current depends nonlinearly on the gradient and lies outside the Sun--Uhlmann setting \cite{sun1997inverse}, where the identity $ \Phi_{s+\epsilon} \equiv \Phi_s $ is proved without assuming full translation covariance.
\end{remark}

We now pass from the recovered $ (g,A,q) $ triple to the original current coefficients. The following comparison is made after transporting $\mathcal{J}_2$ by the fixed map of Proposition \ref{prop:analytic-continuation}. All subscript-$2$ coefficients are recomputed from the transported current. Fixing $\Gamma$ preserves the local nonlinear DN maps and their derivatives.

\begin{lemma}\label{lmm:first-order-comparison}
After the fixed identification in Proposition \ref{prop:analytic-continuation}, replace $\mathcal{J}_2$ by $ H_\Phi \mathcal{J}_2$ and fix a constant background $s$. Set $D_m^{ij}=(Q_1\mathcal{J}_m)^{i,j}(x,s,0)$ and $B_m^i=(Q_1\mathcal{J}_m)^{i,0}(x,s,0)$, and let $L_mw=\partial_i(D_m^{ij}\partial_jw+B_m^iw)$. Assume the transported triples and Dirichlet solution families in \eqref{eq:global-triple-identity} and \eqref{eq:global-solution-identity} coincide, with equal original first linearized DN maps. Then there is an antisymmetric contravariant tensor density $\varphi_{(1)}$ such that
\begin{equation}\label{eq:first-order-invisible}
 D_1^{ij}-D_2^{ij}=\varphi_{(1)}^{ij},\qquad
 B_1^i-B_2^i=\partial_j\varphi_{(1)}^{ij},\qquad
 \nu_i\varphi_{(1)}^{ij}|_\Gamma=0.
\end{equation}
Moreover $L_1=L_2$ as differential expressions.
\end{lemma}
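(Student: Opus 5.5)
The plan is to compare the two divergence-form operators directly, using the coincidence of the induced triples and of the Dirichlet solution families. Write $D_m=\gamma_m+\varphi_m$ and $B_m=\Xi_m$ as in \eqref{def:1st-order-notations}. Equality $g_{1,s}=g_{2,s}$ gives $\gamma_1=\gamma_2$ through \eqref{eq:gamma-metric}, since $\gamma^{ij}=\sqrt{\det g}\,g^{ij}$. Set $\varphi_{(1)}:=\varphi_1-\varphi_2$. It is antisymmetric, and $D_1-D_2=\varphi_{(1)}$ holds automatically. It remains to show $B_1-B_2=\partial_j\varphi_{(1)}^{ij}$ and $\nu_i\varphi_{(1)}^{ij}|_\Gamma=0$.

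For the first identity I would use the equality of the drift forms. By \eqref{eq:def-A}, $a_m=\tfrac12(X_m-d_g^\star F_m)$, so $A_1=A_2$ with a common metric gives $X_1-X_2=d_g^\star(F_1-F_2)$. Raising the index with the common $\sqrt{\det g}\,g$ and using $\sqrt{\det g}(d_g^\star F)^i=-\partial_k\varphi^{ki}=\partial_k\varphi^{ik}$, as in the proof of Lemma \ref{lmm:DN-diff}, this reads $\Xi_1^i-\Xi_2^i=\partial_j\varphi_{(1)}^{ij}$.

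Both claims can then be checked against the expanded form \eqref{eq:1st-order-r}. Its first-order coefficient is $\partial_i\varphi^{ij}+\Xi^j$, and the difference of these coefficients is $\partial_i\varphi_{(1)}^{ij}+\partial_i\varphi_{(1)}^{ji}=0$ by antisymmetry. Its zeroth-order coefficient is $\partial_j\Xi^j$, which vanishes for both operators by Lemma \ref{lmm:coclosed-magnetic-potential}. The principal parts agree because $\gamma_1=\gamma_2$ and $\partial_i\partial_j$ kills the antisymmetric part. Hence $L_1=L_2$ as differential expressions.

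For the boundary condition I would apply Lemma \ref{lmm:DN-diff} to both currents. The magnetic maps agree by Lemma \ref{lmm:bdr-1}, and the original first linearized DN maps agree by hypothesis. So the correction operators in \eqref{eq:DN-difference} agree on $\Gamma$ for every $h\in\mathcal{H}_\Gamma$. Their first-order tangential parts are $\nu_i\varphi_m^{ij}\partial_jh$. The proof of Lemma \ref{lmm:bdr-1} already shows that the odd principal symbol determines $V_m$, and hence $\nu_i\varphi_m^{ij}$ on $\Gamma$, after the canonical boundary identification. Since $\Phi$ fixes $\Gamma$, that identification is now the identity, so $\nu_i\varphi_1^{ij}=\nu_i\varphi_2^{ij}$ on $\Gamma$. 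This gives \eqref{eq:first-order-invisible}.

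The step that needs the most care is index and sign bookkeeping rather than any real difficulty. One must confirm that $\sqrt{\det g}\,(d_g^\star F)^i=-\partial_k\varphi^{ki}$ with the codifferential convention of Subsection \ref{subsec:magnetic-tools}, so that the drift equality yields exactly $+\partial_j\varphi_{(1)}^{ij}$ and not its negative. One must also check that transporting $\mathcal{J}_2$ by $H_\Phi$ is compatible with recomputing $(g,A,q)$: the pullback of triples must correspond to the push-forward $H_\Phi$ of current densities, including the density weight $\det D\Phi$. I would verify this by noting that $H_\Phi$ intertwines the operators $L$ and the forms $\alpha_{\mathcal{J}}$, so the induced triple of $H_\Phi\mathcal{J}_2$ is the push-forward of that of $\mathcal{J}_2$. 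Finally, smoothness in $s$ and analyticity in $x$ of $\varphi_{(1)}$ follow from \eqref{condition:regularity}, since $\varphi_{(1)}$ is the antisymmetric part of $\partial_\xi(\mathcal{J}_1-H_\Phi\mathcal{J}_2)(x,s,0)$.
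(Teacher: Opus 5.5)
Your proposal is correct and matches the paper's proof in its main steps: $\gamma_1=\gamma_2$ from $g_1=g_2$; $B_1^i-B_2^i=\partial_j\varphi_{(1)}^{ij}$ from $A_1=A_2$, with the same sign convention $\sqrt{\det g}\,(d_g^\star F)^i=-\partial_k\varphi^{ki}$; and $L_1=L_2$ by antisymmetry. The one difference is the boundary condition: you re-invoke the odd principal symbol from Lemma \ref{lmm:bdr-1} for the pair $\mathcal{J}_1$, $H_\Phi\mathcal{J}_2$, whereas the paper uses the common solution $w^h=w_1^h=w_2^h$ to get $\nu_i(\varphi_{(1)}^{ij}\partial_jw^h+\partial_k\varphi_{(1)}^{ik}w^h)=0$ on $\Gamma$ and then takes $h(p)=0$ with arbitrary tangential differential. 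Both routes are valid; yours does not need the coincidence of the solution families, and the paper's avoids the boundary-layer symbol calculus.
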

\begin{proof}
Equality $g_1=g_2$ and \eqref{eq:gamma-metric} give $\gamma_1=\gamma_2=:\gamma$. Therefore
\begin{equation*}
 \varphi_{(1)}^{ij}:=D_1^{ij}-D_2^{ij}
 =\varphi_1^{ij}-\varphi_2^{ij}
\end{equation*}
is antisymmetric. The identity used in the proof of Lemma \ref{lmm:DN-diff} can be written
\begin{equation*}
 2\gamma^{ij}(iA_m)_j=B_m^i+\partial_k\varphi_m^{ki}.
\end{equation*}
Since $A_1=A_2$, subtraction gives
\begin{equation}\label{eq:B-from-antisymmetric}
 B_1^i-B_2^i=-\partial_k\varphi_{(1)}^{ki}
 =\partial_k\varphi_{(1)}^{ik},
\end{equation}
which is the second identity in \eqref{eq:first-order-invisible}.

For the boundary condition, use the same global solution $w^h=w_1^h=w_2^h$. Equality of the original linearized boundary current forms and \eqref{eq:B-from-antisymmetric} imply
\begin{equation*}
 0=\nu_i\left(
 \varphi_{(1)}^{ij}\partial_jw^h
 +\partial_k\varphi_{(1)}^{ik}w^h\right).
\end{equation*}
At $p\in\Gamma$, choose $h\in C_c^\infty(\Gamma)$ with $h(p)=0$ and arbitrary tangential differential. Since $w^h|_{\partial M}=h$, its tangential differential at $p$ is $d_{\partial M}h(p)$. The normal derivative does not contribute because the vector $\nu_i\varphi_{(1)}^{ij}$ is tangential by antisymmetry. The displayed identity therefore forces this vector to vanish. Finally, for every smooth $w$,
\begin{align*}
 (L_1-L_2)w
 &=\partial_i\left(
 \varphi_{(1)}^{ij}\partial_jw
 +\partial_k\varphi_{(1)}^{ik}w\right)=0.
\end{align*}
Indeed the Hessian and double-divergence terms vanish by antisymmetry, and the two remaining first-derivative terms cancel after interchanging dummy indices.
\end{proof}

\begin{proof}[Proof of Theorem \ref{thm:first-order}]
Lemma \ref{lmm:bdr-1} converts the first linearized measurements into the magnetic Schr\"odinger data. Theorem \ref{thm:magnetic-recovery} and Proposition \ref{prop:analytic-continuation} then give one $\Gamma$-fixing analytic diffeomorphism $\Phi$, independent of $s$, identifying the induced triples and solution families. Lemma \ref{lmm:first-order-comparison} yields \eqref{eq:1st-order-coeff-a}--\eqref{eq:1st-order-coeff-c} and equality of the linearized differential expressions. The formula $\varphi_{(1)}^{ij}=\partial_{\xi_j}(\mathcal{J}_1-H_\Phi\mathcal{J}_2)^i(x,s,0)$ gives the asserted smoothness in $(x,s)$ and spatial analyticity at each fixed $s$.
\end{proof}

Since $\Phi$ is independent of $s$, the recovered coefficient identities may be differentiated in $s$ along the admissible constant backgrounds. In the remainder of the article we keep the replacement $\mathcal{J}_2\mapsto H_\Phi\mathcal{J}_2$ and write
\begin{equation*}
 w^h:=w_{1,s}^h=w_{2,s}^h,\qquad L_s:=L_{1,s}=L_{2,s},
\end{equation*}
where $w^h$ is independent of $s$, whereas $L_s$ may depend on $s$. Equality of the two differential expressions at each $s$ still allows their displayed divergence-form coefficients to differ by \eqref{eq:first-order-invisible}. In the higher-order induction that follows, the background and every occurrence of $Q_K\mathcal{J}_m$ are understood along the same constant family, at $(x,s,0)$.

\section{Recovery of the higher-order derivatives}
\label{sec:higher-order}

Assume the hypotheses of Theorem \ref{thm:main}. Use the fixed global identification from Theorem \ref{thm:first-order} and fix the constant background $f_0\equiv s$. All coefficients below are evaluated at $(x,s,0)$, and the induction is carried out for every $s\in\mathbb{R}$ simultaneously. For each fixed $s$, let $L=L_s$ denote the common first linearized differential expression and $w^h=w_s^h$ its Dirichlet solutions. We suppress this background index during each induction step. Write
\begin{equation*}
 \delta Q_K \mathcal{J}:=Q_K\mathcal{J}_1-Q_K\mathcal{J}_2,\qquad
 \varphi_{(1)}^{ij}=(\delta Q_1 \mathcal{J})^{i,j},
\end{equation*}
and, for $K\geq1$, set
\begin{equation}\label{eq:def-varphi-K}
 \varphi_{(K)}^{ij}:=\partial_s^{K-1}\varphi_{(1)}^{ij}.
\end{equation}
We prove the coefficient identities
\begin{equation}\label{eq:Kth-order-coeff}
 (\delta Q_K \mathcal{J})^{i,\alpha_1\cdots\alpha_K}
 =
 \begin{cases}
 \partial_j\varphi_{(K)}^{ij},
 &\alpha_1=\cdots=\alpha_K=0,\\
 \varphi_{(K)}^{ij},
 &\text{one $\alpha_r=j\geq1$, all other $\alpha_r=0$},\\
 0,&\text{at least two $\alpha_r$ belong to $\{1,\ldots,n\}$},
 \end{cases}
\end{equation}
together with
\begin{equation}\label{eq:varphi-K-boundary}
 \varphi_{(K)}^{ij}=-\varphi_{(K)}^{ji},\qquad
 \nu_i\varphi_{(K)}^{ij}|_\Gamma=0,
\end{equation}
and equality of the nonlinear solution variations through order $K$. The case $K=1$ is Theorem \ref{thm:first-order}.

\subsection{Simultaneous induction and the integral identity}

Let $K\geq2$. Assume the coefficient identities \eqref{eq:Kth-order-coeff}--\eqref{eq:varphi-K-boundary} and equality of the solution variations through order $K-1$, for every background in the same open interval and in fixed coordinates. The interval quantifier permits differentiation in $s$. For $l<K$ and arbitrary smooth $v_1,\ldots,v_l$, the coefficient identities give
\begin{equation}\label{eq:invisible-product}
 (\delta Q_l \mathcal{J})^{i,\alpha_1\cdots\alpha_l}
 \prod_{r=1}^l P_{\alpha_r}v_r
 =\partial_j\left(\varphi_{(l)}^{ij}\prod_{r=1}^l v_r\right).
\end{equation}
Every current of the displayed form has zero divergence by antisymmetry and zero normal component on $\Gamma$: in boundary-adapted coordinates, $\varphi_{(l)}^{na}|_\Gamma=0$ and $\varphi_{(l)}^{nn}=0$, so the normal component is a tangential derivative of a vanishing boundary trace. We use this cancellation throughout the induction.

For the intermediate partitions $2\leq|\pi|\leq K-1$ in \eqref{eq:higher-order-gen} and \eqref{eq:higher-order-DN}, the arguments are common lower-order variations, so \eqref{eq:invisible-product} cancels their differences in both the equation and its boundary measurement. The single-block partition gives the common first linearized operator applied to the difference of the $K$-th variations.

Put
\begin{equation*}
 z=w_1^{h_{\{1,\ldots,K\}}}-w_2^{h_{\{1,\ldots,K\}}},\qquad
 T_K^i=(\delta Q_K \mathcal{J})^{i,\alpha_1\cdots\alpha_K}
           \prod_{r=1}^K P_{\alpha_r}w^{h_r}.
\end{equation*}
For $h_r\in\mathcal{H}_\Gamma$, the remaining equation and measured boundary identity are
\begin{equation}\label{eq:higher-order-diff}
 Lz+\partial_iT_K^i=0,\qquad z|_{\partial M}=0,\qquad
 \nu_i(D^{ij}\partial_jz+B^iz+T_K^i)|_\Gamma=0.
\end{equation}
Here $Lz=\partial_i(D^{ij}\partial_jz+B^iz)$ may use either current's first-order coefficients. The difference between the two such representations is itself the invisible product at order one.

Let $L^\star$ denote the formal transpose, so that in these coordinates
\begin{equation*}
 L^\star v=\partial_j(D^{ij}\partial_i v)-B^i\partial_i v.
\end{equation*}
Its zero-Dirichlet realization is invertible because that of $L$ is. For a solution $L^\star\widetilde{w}=0$ with boundary value in $C_c^\infty(\Gamma)$, integration by parts in \eqref{eq:higher-order-diff} gives
\begin{equation*}
 0=\int_{\partial M}\widetilde{w}\,\nu_i
 (D^{ij}\partial_jz+B^iz+T_K^i)
 +\int_M zL^\star\widetilde{w}\,dx
 -\int_M T_K^i\partial_i\widetilde{w}\,dx .
\end{equation*}
The boundary integral vanishes on $\Gamma$ by the measured identity and elsewhere because $\widetilde{w}=0$. The volume term containing $z$ also vanishes, and therefore
\begin{equation*}
 \int_M(\delta Q_K \mathcal{J})^{i,\alpha_1\cdots\alpha_K}
 \prod_{r=1}^K P_{\alpha_r}w^{h_r}\,
 \partial_i\widetilde{w}\,dx=0.
\end{equation*}
The notation denotes the intrinsic pairing of a tensor density with covectors. It is independent of the chart.

All components with at least one index $\alpha_r=0$, corresponding to a derivative with respect to $s$, are already determined. Indeed, since the lower-order identities hold as smooth functions of $s$ throughout the background interval,
\begin{equation}\label{eq:scalar-relations}
 (\delta Q_K \mathcal{J})^{i,\alpha_1\cdots\alpha_{K-1}0}
 =\partial_s(\delta Q_{K-1} \mathcal{J})^{i,\alpha_1\cdots\alpha_{K-1}}.
\end{equation}
Differentiating the induction hypothesis gives exactly the corresponding cases of \eqref{eq:Kth-order-coeff}. Symmetry in $\alpha_1,\ldots,\alpha_K$ covers every position of the zero index. Differentiating the boundary condition gives \eqref{eq:varphi-K-boundary}.

The constant direction is available by differentiation in $s$. For $h_1\equiv1$, we have $w^{h_1}=1$, $P_0w^{h_1}=1$ and $P_jw^{h_1}=0$ for $j\geq1$. The just-recovered coefficients with a zero index therefore give
\begin{equation*}
 T_K^i[1,h_2,\ldots,h_K]
 =\partial_j\left(\varphi_{(K)}^{ij}\prod_{r=2}^K w^{h_r}\right).
\end{equation*}
The cancellation above and zero-Dirichlet uniqueness in \eqref{eq:higher-order-diff} imply
\begin{equation*}
 w_1^{1,h_2,\ldots,h_K}-w_2^{1,h_2,\ldots,h_K}=0.
\end{equation*}
This compares the two mixed variations. It does not assume that either vanishes individually. Equality for arbitrary $K$-tuples will follow after the remaining coefficient recovery.

The remaining step recovers the coefficients involving only $\xi$ derivatives, namely $\partial_\xi^K\mathcal{J}_1-\partial_\xi^K\mathcal{J}_2$ at $(x,s,0)$, and then all order-$K$ solution variations. This alternation of coefficient recovery and solution comparison is also present in \cite{chen2026calderon}, where the background metric is fixed and the unknown is a scalar factor. The additional rank induction for that scalar integral identity cannot be imported as an injectivity statement for arbitrary anisotropic tensor coefficients.

Subtract the known current $\partial_j(\varphi_{(K)}^{ij}\prod_{r=1}^K w^{h_r})$ from $T_K$. The same cancellation makes its pairing with $d\widetilde{w}$ zero, leaving only the $K$ derivatives with respect to $\xi$. Fix the real solutions $w^{h_3},\ldots,w^{h_K}$ and set
\begin{equation*}
 C^{ijk}:=(\delta Q_K \mathcal{J})^{i,jk \alpha_3\cdots \alpha_K}\prod_{r=3}^K P_{\alpha_r}w^{h_r}
=(\delta Q_K \mathcal{J})^{i,jk j_3\cdots j_K}\prod_{r=3}^K\partial_{j_r}w^{h_r},
\end{equation*}
with the empty product understood as $1$ when $K=2$. The second equality follows from \eqref{eq:scalar-relations}: since $j,k\geq1$, any additional zero index gives a vanishing component. This tensor satisfies $C^{ijk}=C^{ikj}$ and \eqref{eq:int-id-3-1} for all smooth solutions $w_1,w_2\in\ker L$ and $\widetilde{w}\in\ker L^\star$ whose boundary traces belong to $C_c^\infty(\Gamma)$. Thus Proposition \ref{prop:three-gradient-boundary} applies.

\subsection{Injectivity of the three-gradient integral transform}
\label{subsec:three-gradient}

We prove Proposition \ref{prop:three-gradient-boundary}, then combine it with Runge approximation to recover the remaining coefficients involving only $\xi$ derivatives and close the simultaneous induction.

\begin{proof}[Proof of Proposition \ref{prop:three-gradient-boundary}]
We first extract a pointwise boundary identity, prove that its algebraic kernel is trivial, and then repeat the argument for higher-order normal jets.

\emph{Boundary symbol identity.} By complex multilinearity, \eqref{eq:int-id-3-1} also holds for complex-valued solutions. Fix $x_0\in\Gamma$ and a boundary coordinate neighborhood $U\Subset\Gamma$ containing it. Use boundary normal coordinates over $U$, with inward unit conormal $\nu=dx^n$. Choose nonzero tangential covectors $\xi,\eta$ with $\xi+\eta\neq0$ and real boundary cutoffs $\chi_1,\chi_2,\widetilde{\chi}$ in $C_c^\infty(U)$. Apply Proposition \ref{Prop-boundary-quasimodes} to scalar representatives of $L$ and $L^\star$, obtained by division by the same fixed positive density. The first two quasimodes correspond to $L$ and the third to $L^\star$. Their principal symbols agree, so the same phase covectors $p(\zeta)$ can be used for all three. This gives
\begin{align*}
 u^{\rm qm}_{1,\tau}&=e^{i\tau\Phi_1}a_{1,\tau},&
 d\Phi_1(x_0)&=p(\xi),\\
 u^{\rm qm}_{2,\tau}&=e^{i\tau\Phi_2}a_{2,\tau},&
 d\Phi_2(x_0)&=p(\eta),\\
 \widetilde{u}^{\rm qm}_\tau&=e^{i\tau\widetilde{\Phi}}\widetilde{a}_\tau,&
 d\widetilde{\Phi}(x_0)&=p(-\xi-\eta),
\end{align*}
where, at $x_0$,
\begin{equation*}
 p(\zeta):=\zeta+i|\zeta|_g\nu,\qquad
 a_{m,\tau}\sim\sum_{l\geq0}\tau^{-l} a_{m,l},\qquad
 \widetilde{a}_\tau\sim\sum_{l\geq0}\tau^{-l}\widetilde{a}_l,
\end{equation*}
Here $m=1,2$, and the leading boundary amplitudes are $\chi_1,\chi_2,\widetilde{\chi}$, respectively. Their tangential oscillations cancel. Correct these quasimodes to exact solutions by setting $w_{m,\tau}=u^{\rm qm}_{m,\tau}+r_{m,\tau}$ and $\widetilde{w}_\tau=\widetilde{u}^{\rm qm}_\tau+\widetilde{r}_\tau$, where
\begin{align*}
 Lr_{m,\tau}&=-Lu^{\rm qm}_{m,\tau},&
 r_{m,\tau}|_{\partial M}&=0,\\
 L^\star\widetilde{r}_\tau&=-L^\star\widetilde{u}^{\rm qm}_\tau,&
 \widetilde{r}_\tau|_{\partial M}&=0.
\end{align*}
For every prescribed $N$, the WKB construction and the Dirichlet elliptic estimate give, for each fixed Sobolev index $\ell$,
\begin{equation*}
 \|r_{1,\tau}\|_{H^\ell}+\|r_{2,\tau}\|_{H^\ell}
 +\|\widetilde{r}_\tau\|_{H^\ell}=O(\tau^{-N}).
\end{equation*}
Take $\ell>n/2+1$ and $N\geq1$, so the corrections are $O(\tau^{-N})$ in $C^1$. Write $u_{m}=u^{\rm qm}_{m,\tau}$, $r_{m}=r_{m,\tau}$, $\widetilde{u}=\widetilde{u}^{\rm qm}_\tau$, and $\widetilde{r}=\widetilde{r}_\tau$. The correction remainder is
\begin{align*}
 I_{\mathrm{corr}}
 &:=I(C;\widetilde{w}_\tau;w_{1,\tau},w_{2,\tau})-I(C;\widetilde{u};u_1,u_2)\\
 &=I(C;\widetilde{r};u_1,u_2)
   +I(C;\widetilde{u};r_1,u_2)+I(C;\widetilde{u};u_1,r_2)\\
 &\quad+I(C;\widetilde{r};r_1,u_2)
   +I(C;\widetilde{r};u_1,r_2)+I(C;\widetilde{u};r_1,r_2)
   +I(C;\widetilde{r};r_1,r_2).
\end{align*}
Since each quasimode gradient is $O(\tau)$ in $C^0$, the three terms with one correction are $O(\tau^{2-N})$, those with two corrections are $O(\tau^{1-2N})$, and the last is $O(\tau^{-3N})$. Thus $I_{\mathrm{corr}}=O(\tau^{2-N})$. For the $r$-th normal jet, choosing $N>r$ gives $\tau^{r-2}I_{\mathrm{corr}}=O(\tau^{r-N})=o(1)$.

Let $t=x^n$. After shrinking $U$ if necessary, write
\begin{equation*}
 \mu(x')=|\xi|_{g(x')}+|\eta|_{g(x')}
          +|\xi+\eta|_{g(x')}>0.
\end{equation*}
Their sum $\Theta=\Phi_1+\Phi_2+\widetilde{\Phi}$ satisfies $\Theta(x',0)=0$, $\Theta(x',t)=i\mu(x')t+O(t^2)$, and $\operatorname{Im}\Theta\geq ct$ on the common support. For every fixed $r\in\mathbb{Z}_{\geq0}$ and smooth amplitude $b_\tau=b_0(x',0)+O(t)+O(\tau^{-1})$, we have, uniformly on the compact tangential support,
\begin{equation}\label{eq:three-quasimode-scale}
 \int_0^\varepsilon t^r e^{i\tau\Theta(x',t)}b_\tau(x',t)\,dt
 =\frac{r!}{\tau^{r+1}\mu(x')^{r+1}}b_0(x',0)+O(\tau^{-r-2}).
\end{equation}
Indeed, $t=\theta/\tau$ gives the leading exponential moment $r!/\mu^{r+1}$. Replacing $\Theta$ by $i\mu t$ has error bounded by $C\tau t^2e^{-c\tau t}$, whose $t^r$-weighted integral is $O(\tau^{-r-2})$. The $O(t)+O(\tau^{-1})$ amplitude error has the same order. The exponential tail beyond $\varepsilon$ is negligible. The constants may depend on $r$, which remains fixed.

For the boundary value, take $r=0$. The three principal derivatives contribute $(i\tau)^3=-i\tau^3$. In the boundary integrals below, $p(\zeta)$ denotes the corresponding phase differential $p_{x'}(\zeta):=\zeta+i|\zeta|_{g(x')}\nu$, with the $x'$-dependence suppressed. Substitution in \eqref{eq:int-id-3-1} therefore gives
\begin{equation}\label{eq:three-quasimode-integral}
 0=\lim_{\tau\to\infty}\tau^{-2} I
 =-i\int_{\partial M}
 \frac{C^{ijk}p(-\xi-\eta)_ip(\xi)_jp(\eta)_k}
 {\mu}\,a_{1,0}a_{2,0}\widetilde{a}_0\,dx'.
\end{equation}
Here the smooth nonvanishing boundary coordinate density has been absorbed into one leading amplitude. Since the three leading amplitudes are arbitrary and may be localized at $x_0$, \eqref{eq:three-quasimode-integral} implies the pointwise identity
\begin{equation}\label{eq:boundary-symbol-C}
 C^{ijk}p(-\xi-\eta)_ip(\xi)_jp(\eta)_k=0
\end{equation}
at $x_0$.

\emph{Algebraic injectivity.} We now prove that \eqref{eq:boundary-symbol-C} has no algebraic kernel when $n\geq3$. Let the boundary tangential space be $V=\nu^\perp$, so $\dim V\geq2$, and we can write $C(r;x,y)=C^{ijk}r_ix_jy_k$. For tangential arguments set
\begin{align*}
 A(z;x,y)&:=C(z;x,y),&
 B(x,y)&:=C(\nu;x,y),\\
 D(z,y)&:=C(z;\nu,y),&
 E(y)&:=C(\nu;\nu,y),\\
 F(z)&:=C(z;\nu,\nu),&
 G&:=C(\nu;\nu,\nu).
\end{align*}
Let $u,v\in V$ be nonparallel unit covectors, put $\eta=sv$ with $s>0$, and set
\begin{equation*}
 q(s):=|u+sv|_g=\sqrt{1+2s\langle u,v\rangle_g+s^2}.
\end{equation*}
After removing the common factor $s$, the substituted symbol identity is
\begin{equation*}
 0=C\bigl(-(u+sv)+iq(s)\nu;\,u+i\nu;\,v+i\nu\bigr).
\end{equation*}
Its real and imaginary parts are
\begin{align}
 &[A(u;u,v)-F(u)]
 +s[A(v;u,v)-F(v)]
 +q(s)[E(u)+E(v)]=0,\label{eq:C-algebra-real}\\
 &q(s)[B(u,v)-G]
 -D(u,u+v)-sD(v,u+v)=0.\label{eq:C-algebra-imag}
\end{align}
We use the elementary fact that, when $|\langle u,v\rangle|<1$,
\begin{equation*}
 a+bs+cq(s)=0\quad\text{for every }s>0
 \quad\Longrightarrow\quad a=b=c=0.
\end{equation*}
Indeed, $q''(s)=(1-\langle u,v\rangle^2)/q(s)^3>0$. Differentiating the identity twice gives $c=0$, and then $a=b=0$.

Applying this fact separately to \eqref{eq:C-algebra-real} and \eqref{eq:C-algebra-imag} gives
\begin{align}
 A(u;u,v)&=F(u),&
 A(v;u,v)&=F(v),&
 E(u)+E(v)&=0,\label{eq:C-algebra-consequences-real}\\
 D(u,u+v)&=0,&
 D(v,u+v)&=0,&
 B(u,v)&=G.\label{eq:C-algebra-consequences-imag}
\end{align}
Replacing $v$ by $-v$ gives, in the same order,
\begin{align*}
 -A(u;u,v)&=F(u),&
 A(v;u,v)&=-F(v),&
 E(u)-E(v)&=0,\\
 D(u,u-v)&=0,&
 D(v,u-v)&=0,&
 -B(u,v)&=G.
\end{align*}
Comparing these relations with
\eqref{eq:C-algebra-consequences-real}--%
\eqref{eq:C-algebra-consequences-imag} yields
\begin{equation*}
 E(u)=E(v)=F(u)=F(v)=B(u,v)=G=0,
 \qquad A(u;u,v)=A(v;u,v)=0,
\end{equation*}
and
\begin{equation*}
 D(u,u)=D(u,v)=D(v,u)=D(v,v)=0.
\end{equation*}
Varying the nonparallel unit covectors and using homogeneity and continuity gives $E=F=B=G=D=0$ and $A(u;u,v)=0$ for all $u,v\in V$. Polarizing the last identity in the repeated argument gives
\begin{equation*}
 0=A(x+z;x+z,y)=A(x;z,y)+A(z;x,y),
\end{equation*}
so $A$ is antisymmetric in its first two indices. It is symmetric in its last two indices, and Lemma \ref{lmm:LAsym} therefore gives $A=0$. Thus all components of $C$ vanish at the boundary point.

\emph{Higher-order normal jets.} For the normal derivatives, assume inductively that $\partial_\nu^l C=0$ on $\Gamma$ for $l<r$. In the normal coordinate $t=x^n$,
\begin{equation*}
 C(x',t)=\frac{t^r}{r!}\partial_\nu^rC(x',0)+t^{r+1}C_{r,\mathrm{rem}}(x',t),
\end{equation*}
where $C_{r,\mathrm{rem}}$ is smooth and bounded on the fixed coordinate neighborhood.
Apply \eqref{eq:three-quasimode-scale} with this $r$. Its factor $r!$ cancels the Taylor coefficient $1/r!$, and the three principal derivatives give the leading order $ \tau^3\tau^{-r-1}=\tau^{2-r} $.
Multiplying by $\tau^{r-2}$ and passing to the limit yields
\begin{equation*}
 0=\lim_{\tau\to\infty}\tau^{r-2}I = -i\int_{\partial M}
 \frac{(\partial_\nu^rC)^{ijk}p(-\xi-\eta)_ip(\xi)_jp(\eta)_k}
 {\mu^{r+1}}\,a_{1,0}a_{2,0}\widetilde{a}_0\,dx'.
\end{equation*}
Localizing the arbitrary leading amplitudes and multiplying by the nonzero factor $\mu^{r+1}$ gives the same pointwise algebraic identity \eqref{eq:boundary-symbol-C} with $C$ replaced by $\partial_\nu^rC$. To identify the remaining errors, expand each quasimode derivative as
\begin{equation*}
 du^{\rm qm}_{m,\tau}
 =e^{i\tau\Phi_m}\bigl(i\tau a_{m,\tau}\,d\Phi_m+da_{m,\tau}\bigr),
\end{equation*}
with the analogous formula for the transposed equation. Terms containing at least one $da_{m,\tau}$ have integrands bounded by $C_r\tau^2t^re^{-c\tau t}$. The coefficient remainder $t^{r+1}C_{r,\mathrm{rem}}$ contributes at most $C_r\tau^3t^{r+1}e^{-c\tau t}$. After integration and the scaling $\tau^{r-2}$, these contributions are
\begin{equation*}
 \tau^{r-2} O\bigl(\tau^2\tau^{-r-1}\bigr)=O(\tau^{-1}),\qquad
 \tau^{r-2} O\bigl(\tau^3\tau^{-r-2}\bigr)=O(\tau^{-1}),
\end{equation*}
respectively. In the term with three phase derivatives, replacing each amplitude and phase differential by its leading boundary value produces an amplitude difference $O(t)+O(\tau^{-1})$. The corresponding tangential covectors are $\xi$, $\eta$, and $-\xi-\eta$, respectively. Its integrand is bounded by $C_r(\tau^3t^{r+1}+\tau^2t^r)e^{-c\tau t}$. Replacing $e^{i\tau\Theta}$ by $e^{-\tau\mu t}$ contributes at most $C_r\tau^4t^{r+2}e^{-c\tau t}$. Both normalized integrals are $O(\tau^{-1})$, as in \eqref{eq:three-quasimode-scale}. The exact-solution corrections are $O(\tau^{r-N})=o(1)$ for $N>r$. All tangential derivatives of the lower normal jets vanish by the induction hypothesis. Thus the same algebraic injectivity gives $\partial_\nu^rC=0$ and closes the induction.

\emph{Analytic continuation.} Finally, on the open boundary subset $\Sigma$ where $C$ is analytic, all normal jets and their tangential derivatives vanish. Its convergent Taylor series is therefore zero, so $C$ vanishes on a nonempty interior open set. The analytic identity theorem on the connected interior $M^\circ$ gives $C\equiv0$ there, and continuity gives the same conclusion on $\partial M$.
\end{proof}

We now apply Proposition \ref{prop:three-gradient-boundary} to the coefficients involving only $\xi$ derivatives. For $K=2$, these form the analytic tensor $C^{ijk}=(\delta Q_2 \mathcal{J})^{i,jk}$, with $j,k\in\{1,\ldots,n\}$, so the proposition gives $C^{ijk}=(\delta Q_2 \mathcal{J})^{i,jk}=0$. For $K>2$, choose nonempty open subsets $\Sigma,\Gamma_0\Subset\Gamma$ with disjoint closures, and take real $h_3,\ldots,h_K\in C_c^\infty(\Gamma_0)$. Keep their solutions fixed, independently of the boundary-layer parameter $\tau$. The contracted tensor
\begin{equation*}
 C^{ijk}=(\delta Q_K \mathcal{J})^{i,jk j_3\cdots j_K}
                 \prod_{r=3}^K\partial_{j_r}w^{h_r}
\end{equation*}
is smooth on $M$, analytic in $M^\circ$, and analytic up to $\Sigma$: condition \eqref{condition:regularity} makes the pure $\xi$ derivatives analytic in $x$ at the fixed $s$, and these fixed solutions have zero Dirichlet data near $\Sigma$, so Lemma \ref{lmm:analytic-boundary} applies there. Proposition \ref{prop:three-gradient-boundary} gives $C=0$ for every choice of the fixed boundary data $h_3,\ldots,h_K\in C_c^\infty(\Gamma_0)$.

At a fixed interior point, Lemma \ref{lmm:Runge-jets} with boundary data supported in $\Gamma_0$ realizes arbitrary gradients of each fixed solution. By multilinearity, every component $(\delta Q_K \mathcal{J})^{i,j_1\cdots j_K}$ with $j_1,\ldots,j_K\in\{1,\ldots,n\}$ is therefore zero. This proves the remaining case of \eqref{eq:Kth-order-coeff} for every $s$.

Now \eqref{eq:invisible-product} holds also for $l=K$, so the source $T_K$ in \eqref{eq:higher-order-diff} is divergence-free. The source cancellation holds for arbitrary smooth boundary directions, so zero-Dirichlet uniqueness gives $z=0$ on $M$. This completes both parts of the simultaneous induction without assuming equality of the $K$-th solution variations in advance.

\begin{corollary}\label{cor:higher-order-invisible}
Assume the hypotheses of Theorem \ref{thm:main} and make the fixed identification of Theorem \ref{thm:first-order}. For every $s\in\mathbb{R}$ and $K\geq1$, equations \eqref{eq:Kth-order-coeff} and \eqref{eq:varphi-K-boundary} hold, with $\delta Q_K \mathcal{J}=Q_K\mathcal{J}_1-Q_K\mathcal{J}_2$ evaluated at $(x,s,0)$ and $\varphi_{(K)}=\partial_s^{K-1}\varphi_{(1)}$ as in \eqref{eq:def-varphi-K}. In particular, for arbitrary $v_1,\ldots,v_K\in C^\infty(M)$,
\begin{equation*}
 (\delta Q_K \mathcal{J})^{i,\alpha_1\cdots\alpha_K}
 \prod_{r=1}^K P_{\alpha_r}v_r
 =
 \partial_j\left(\varphi_{(K)}^{ij}
 \prod_{r=1}^K v_r\right).
\end{equation*}
The two solution maps also have equal variations of every order at each constant background, in the notation \eqref{def-lin-sol}.
\end{corollary}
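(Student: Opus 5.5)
The corollary collects what the simultaneous induction of Section \ref{sec:higher-order} has already established, so the plan is to organize that induction formally on $K$ and read off the stated consequences. The base case $K=1$ is Theorem \ref{thm:first-order}. It gives \eqref{eq:1st-order-coeff-a}--\eqref{eq:1st-order-coeff-c}, which are exactly \eqref{eq:Kth-order-coeff}--\eqref{eq:varphi-K-boundary} with $\varphi_{(1)}$. It also gives equality of the first variations $w_1^h=w_2^h$ for $h\in\mathcal{H}_\Gamma$. The equality for arbitrary smooth $h$ follows because, after the identification, $L_1=L_2$ as differential expressions (Lemma \ref{lmm:first-order-comparison}) and the Dirichlet problem is uniquely solvable.

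For the inductive step $K\geq2$, I would assume the statements through order $K-1$ on every background in an open interval. First, the components with some $\alpha_r=0$ are obtained by differentiating the order-$(K-1)$ identities in $s$ via \eqref{eq:scalar-relations}. This uses that $\Phi$ is independent of $s$ (Proposition \ref{prop:analytic-continuation}), so the lower identities are smooth identities in $s$. Differentiating $\nu_i\varphi_{(K-1)}^{ij}|_\Gamma=0$ and the antisymmetry in $s$ gives \eqref{eq:varphi-K-boundary} for $\varphi_{(K)}=\partial_s\varphi_{(K-1)}$. Next, the pure $\xi$ components are handled by the integral identity derived from \eqref{eq:higher-order-diff}, which is available because the intermediate partitions cancel by \eqref{eq:invisible-product}. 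I would apply Proposition \ref{prop:three-gradient-boundary} to the contracted tensor $C^{ijk}$ with auxiliary solutions supported in $\Gamma_0$. Its analyticity up to $\Sigma$ comes from Lemma \ref{lmm:analytic-boundary} and \eqref{condition:regularity}. Finally, Lemma \ref{lmm:Runge-jets} removes the contraction by multilinearity. This gives \eqref{eq:Kth-order-coeff} at order $K$.

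The displayed product formula then follows from a direct computation. Only the all-zero and single-gradient-index components survive. Summing over the position of the gradient index gives
\begin{equation*}
 \partial_j\varphi_{(K)}^{ij}\prod_r v_r+\varphi_{(K)}^{ij}\partial_j\Bigl(\prod_r v_r\Bigr)=\partial_j\Bigl(\varphi_{(K)}^{ij}\prod_r v_r\Bigr).
\end{equation*}
For the solution variations, substituting this into \eqref{eq:higher-order-gen} for both currents shows that every partition term differs by a divergence-free current, by antisymmetry. This holds because all lower variations agree by induction. Hence $L z=0$ with $z|_{\partial M}=0$, so $z=0$ for arbitrary smooth boundary directions.

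The step I would check most carefully is the bookkeeping of the background quantifier. Differentiation in $s$ requires the order-$(K-1)$ identities to hold on an open interval of backgrounds, not at a single $s$. Since every constant background is admissible and the induction runs for all $s\in\mathbb{R}$ simultaneously, this is satisfied. Lemma \ref{lmm:forward} also requires no uniformity in $s$, so the hypothesis is met at each stage.
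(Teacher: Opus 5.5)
Your proposal is correct and follows the paper's own argument: the corollary is exactly the simultaneous induction of Section \ref{sec:higher-order}. In that induction, the components with a zero index come from $s$-differentiation via \eqref{eq:scalar-relations}, the pure-gradient components come from Proposition \ref{prop:three-gradient-boundary} applied to the contracted tensor followed by Lemma \ref{lmm:Runge-jets}, and the $K$-th variations agree by zero-Dirichlet uniqueness. One wording point needs tightening: the single-block partition in \eqref{eq:higher-order-gen} does not differ by a divergence-free current, but contributes $Lz$ plus an order-one invisible term, and this is exactly where your conclusion $Lz=0$ comes from.
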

The simultaneous induction in this section proves the corollary. The next subsection reconstructs the current from these identities and verifies the zeroth-order conditions needed for Theorem \ref{thm:main}.

\subsection{Completion of the nonlinear recovery}
\label{subsec:nonlinear-completion}

We now use analyticity in $\xi$ to reconstruct the full current at each constant background and complete the proof of Theorem \ref{thm:main}.

\begin{proof}[Proof of Theorem \ref{thm:main}]
Theorem \ref{thm:first-order} supplies a single $\Gamma$-fixing analytic diffeomorphism $\Phi$. After replacing $\mathcal{J}_2$ by $H_\Phi\mathcal{J}_2$, which preserves \eqref{condition:regularity} since $\Phi$ is analytic and independent of $s$, Corollary \ref{cor:higher-order-invisible} gives the coefficient identities at every constant background. Set $\varphi=\varphi_{(1)}$. This field is smooth in $(x,s)$, analytic in $x$ for each fixed $s$, and satisfies
\begin{equation*}
 \varphi^{ij}=-\varphi^{ji},\qquad
 \nu_i\varphi^{ij}|_\Gamma=0.
\end{equation*}
We recover the full current difference from this field and the coefficients involving only $\xi$ derivatives.

Fix $x$ and $s$. The first-order identity and Corollary \ref{cor:higher-order-invisible} give
\begin{equation*}
 \partial_{\xi_j}(\mathcal{J}_1-\mathcal{J}_2)^i(x,s,0)=\varphi^{ij}(x,s),
 \qquad
 \partial_\xi^K(\mathcal{J}_1-\mathcal{J}_2)(x,s,0)=0,\quad K\geq2.
\end{equation*}
Analyticity in $\xi$ therefore makes $(\mathcal{J}_1-\mathcal{J}_2)^i-\varphi^{ij}\xi_j$ constant near $\xi=0$. The identity theorem extends this conclusion to the connected fiber $T_x^*M$. Since the coefficient identities hold at every $s$, we obtain
\begin{equation*}
 (\mathcal{J}_1-\mathcal{J}_2)^i(x,s,\xi)
 =\mathcal{I}^i(x,s)+\varphi^{ij}(x,s)\xi_j,
\end{equation*}
where $\mathcal{I}(x,s)=(\mathcal{J}_1-\mathcal{J}_2)(x,s,0)$ is smooth in $(x,s)$ and analytic in $x$ for each fixed $s$. The first-order identity involving the $s$ derivative gives
\begin{equation*}
 \partial_s\mathcal{I}^i=\partial_j\varphi^{ij}.
\end{equation*}
In particular, the normalization $\mathcal{I}(x,0)=0$ yields
\begin{equation*}
 \mathcal{I}^i(x,s)=\int_0^s\partial_j\varphi^{ij}(x,t)\,dt.
\end{equation*}
Finally, the equations at the constant solutions $u_m^s\equiv s$ and equality of $N_{\mathcal{J}_m}^\Gamma(s,0)$ give, for every $s$,
\begin{equation*}
 \partial_i\mathcal{I}^i=0\quad\text{in }M,\qquad
 \nu_i\mathcal{I}^i=0\quad\text{on }\Gamma.
\end{equation*}
These identities and the antisymmetry and boundary condition of $\varphi$ are precisely \eqref{condition:invisible}. Restoring the original notation for $\mathcal{J}_2$ gives the asserted decomposition with $H_\Phi\mathcal{J}_2$.
\end{proof}

\appendix

\section{A Hall conductor and the antisymmetric ambiguity}
\label{app:hall-example}

We illustrate the antisymmetric obstruction in a Hall-effect-type constitutive model with full boundary data, taking $\Gamma=\partial M=\partial\Omega$ throughout this appendix.

Let a conductor occupy a domain $\Omega\subset\mathbb{R}^3$ with symmetric positive definite zero-field conductivity $\sigma=\sigma^T>0$. The static magnetic field $\mathbf{B}$ is held fixed as the boundary value of $u$ varies, but need not be known. We use $\mathbf{E}=\nabla u$, so $u$ is the negative of the conventional electrostatic potential. Write $ R_{ijk}=\epsilon_{ijl}\mathcal{R}^l_k  $ for the Hall tensor, and $\mathbf{B}^l_{\mathrm{eff}}=\mathcal{R}^l_k\mathbf{B}^k$ for the effective Hall field. The Hall constitutive relation is
\begin{equation}\label{eq:Hall-effect}
 \mathcal{J}=\sigma(\mathbf{E}+\mathcal{J}\times\mathbf{B}_{\mathrm{eff}}),
 \quad\text{or equivalently } \mathbf{E}=(\sigma^{-1}-W)\mathcal{J},~\text{where } W_{ij}=\epsilon_{ijl}\mathbf{B}_{\mathrm{eff}}^l.
\end{equation}
This is a local resistivity model retaining the term linear in the magnetic field. See \cite{LANDAU198486} for a standard textbook treatment of the physical background. All constitutive relations are pointwise in $x$, and the material parameters and magnetic field may vary spatially.

For the isotropic conductors considered below, take $\sigma=\overline{\sigma}I$ and $\mathcal{R}=R_HI$ with $\overline{\sigma},R_H>0$. The effective Hall field and current law are then
\begin{equation*}
 \mathbf{b}=R_H\mathbf{B},\qquad
 W\mathbf{v}=\mathbf{v}\times\mathbf{b},\qquad
 \mathcal{J}(x,\xi)=(\overline{\sigma}^{-1}I-W)^{-1}\xi.
\end{equation*}
The potential satisfies $\nabla\cdot\mathcal{J}(x,\nabla u)=0$ with prescribed boundary value $f=u|_{\partial\Omega}$. The full DN map \eqref{def:DN} records the resulting normal current on $\partial\Omega$. The magnetic flux through a boundary component $\Gamma_j$ is $\int_{\Gamma_j}\mathbf{B}\cdot\mathbf{n}\,dS$, where $\mathbf{n}$ is the outward unit normal.

These currents are independent of $s$ and vanish at $\xi=0$, so the invisible current reduces to
\begin{equation*}
    \delta\mathcal{J}^i=\varphi^{ij}(x)\xi_j,
\end{equation*}
with $\varphi$ antisymmetric, divergence-free, and tangential to the boundary. For the isotropic model above, this difference corresponds to a weighted difference of the effective Hall fields. The following proposition gives geometric and divergence conditions under which this ambiguity vanishes.

\begin{proposition}[Recovery of the effective Hall field in fixed coordinates]\label{prop:hall-recovery}
Let $\Omega\subset\mathbb{R}^3$ be a bounded simply connected domain with real analytic boundary. For $m=1,2$, let $\overline{\sigma}_m,R_{H,m}>0$ and $\mathbf{B}_m$ be real analytic on $\overline{\Omega}$, with $\nabla\cdot\mathbf{B}_m=0$, and let $\mathcal{J}_m$ be the corresponding isotropic current laws above, with $\mathbf{b}_m=R_{H,m}\mathbf{B}_m$. Suppose that the full DN maps agree and that the identification in Theorem \ref{thm:main} can be taken to be the identity in the original Euclidean coordinates. Assume either that $\partial\Omega$ is connected or that each $\mathbf{B}_m$ has zero magnetic flux through every boundary component. Then
\begin{equation*}
 \overline{\sigma}_1=\overline{\sigma}_2,\qquad
 \mathbf{b}_1=\mathbf{b}_2.
\end{equation*}
Thus a known positive Hall coefficient determines the magnetic field. If the same magnetic field is known instead, the Hall coefficient is determined wherever that field is nonzero. Without either prior specification, the factors are not separately determined, since $(R_H,\mathbf{B})\mapsto(cR_H,\mathbf{B}/c)$ for any constant $c>0$ leaves $ \mathbf{b} $ unchanged.
\end{proposition}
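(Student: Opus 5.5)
Since the identification is the identity, Theorem \ref{thm:main} applied to the two $s$-independent currents gives $\mathcal{J}_1-\mathcal{J}_2=\varphi^{ij}(x)\xi_j\partial_i$. Here $\varphi$ is antisymmetric and divergence-free, and $\nu_i\varphi^{ij}=0$ on $\partial\Omega$. The $\mathcal{I}$ part vanishes because $\mathcal{J}_m(x,0)=0$ and the currents are independent of $s$. The plan is to read off $\overline{\sigma}_m$ and $\mathbf{b}_m$ from the symmetric and antisymmetric parts of the matrix $M_m:=(\overline{\sigma}_m^{-1}I-W_m)^{-1}$, and then to kill the antisymmetric ambiguity topologically.

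\emph{Step 1: algebra of $M_m$.} Write $W\mathbf{v}=\mathbf{v}\times\mathbf{b}$, so $W$ is antisymmetric and $W^2\mathbf{v}=(\mathbf{v}\cdot\mathbf{b})\mathbf{b}-|\mathbf{b}|^2\mathbf{v}$. A direct inversion gives
\begin{equation*}
 M=\frac{\overline{\sigma}}{1+\overline{\sigma}^2|\mathbf{b}|^2}\Bigl(I+\overline{\sigma}W+\overline{\sigma}^2\mathbf{b}\mathbf{b}^T\Bigr).
\end{equation*}
The symmetric part $\gamma=\frac{\overline{\sigma}}{1+\overline{\sigma}^2|\mathbf{b}|^2}(I+\overline{\sigma}^2\mathbf{b}\mathbf{b}^T)$ is recovered exactly, because $M_1-M_2=\varphi$ is antisymmetric. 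Its eigenvalue $\frac{\overline{\sigma}}{1+\overline{\sigma}^2|\mathbf{b}|^2}$ on $\mathbf{b}^\perp$ and $\overline{\sigma}$ along $\mathbf{b}$ (or the determinant together with the trace) then determine $\overline{\sigma}$ and $|\mathbf{b}|$. Where $\mathbf{b}\neq0$ they also determine the line spanned by $\mathbf{b}$. Hence $\overline{\sigma}_1=\overline{\sigma}_2$ and $\mathbf{b}_2=\pm\mathbf{b}_1$ pointwise, and the sign is ambiguous from $\gamma$ alone. The antisymmetric part of $M$ corresponds to the vector $c(x)\mathbf{b}$ with $c=\overline{\sigma}^2/(1+\overline{\sigma}^2|\mathbf{b}|^2)>0$, and $c$ is now common to $m=1,2$. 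Therefore $\varphi$ corresponds to the vector field $\mathbf{V}=c(\mathbf{b}_1-\mathbf{b}_2)$.

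\emph{Step 2: properties of $\mathbf{V}$.} In three dimensions, an antisymmetric $\varphi$ with $\partial_i\varphi^{ij}=0$ corresponds to $\nabla\times\mathbf{V}=0$. The tangency $\nu_i\varphi^{ij}=0$ means $\mathbf{n}\times\mathbf{V}=0$ on $\partial\Omega$, so $\mathbf{V}$ is normal there. Since $\Omega$ is simply connected, $\mathbf{V}=\nabla\psi$. Because the tangential part of $\nabla\psi$ vanishes, $\psi$ is constant on each boundary component $\Gamma_j$.

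\emph{Step 3: main obstacle.} We must show $\mathbf{V}=0$ without knowing $\nabla\cdot\mathbf{V}$, since $c$ is nonconstant and the divergence is not directly controlled. I would use the sign information from Step 1. Where $\mathbf{b}_2=\mathbf{b}_1$ we have $\mathbf{V}=0$, and where $\mathbf{b}_2=-\mathbf{b}_1$ we have $\mathbf{V}=2c\mathbf{b}_1$. By analyticity, the set where $\mathbf{b}_1\neq0$ is connected or dense enough that one alternative holds on each component of $\{\mathbf{b}_1\ne0\}$, and in fact globally by analyticity of $\mathbf{b}_1\mp\mathbf{b}_2$. So either $\mathbf{V}\equiv0$, or $\mathbf{b}_2=-\mathbf{b}_1$ everywhere and $2c\mathbf{b}_1=\nabla\psi$.

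In the second case $\nabla\cdot\mathbf{b}_1=0$ gives $\nabla\cdot(c^{-1}\nabla\psi)=0$, a uniformly elliptic equation with $\psi$ constant on each $\Gamma_j$. If $\partial\Omega$ is connected, the maximum principle forces $\psi$ to be constant, so $\mathbf{b}_1=0$ and then $\mathbf{b}_2=0$ as well. If instead every flux vanishes, integrate $\psi\,\nabla\cdot(c^{-1}\nabla\psi)=0$ by parts to get
\begin{equation*}
 \int_\Omega c^{-1}|\nabla\psi|^2=\sum_j\psi|_{\Gamma_j}\int_{\Gamma_j}c^{-1}\partial_{\mathbf{n}}\psi\,dS=\sum_j 2\psi|_{\Gamma_j}\int_{\Gamma_j}\mathbf{b}_1\cdot\mathbf{n}\,dS=0,
\end{equation*}
which again gives $\nabla\psi=0$. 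In either case $\mathbf{b}_1=\mathbf{b}_2$.

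The final remarks of the proposition follow immediately. Knowing $R_H$ gives $\mathbf{B}=\mathbf{b}/R_H$, knowing $\mathbf{B}$ gives $R_H$ wherever $\mathbf{B}\neq0$, and the scaling $(R_H,\mathbf{B})\mapsto(cR_H,\mathbf{B}/c)$ preserves $\mathbf{b}$ and hence the current law.
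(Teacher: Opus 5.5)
Your overall strategy works, but Step 3 as written uses hypotheses you do not have. The assumptions are $\nabla\cdot\mathbf{B}_m=0$ and zero flux of $\mathbf{B}_m$. You instead invoke $\nabla\cdot\mathbf{b}_1=0$ and $\int_{\Gamma_j}\mathbf{b}_1\cdot\mathbf{n}\,dS=0$. The Hall coefficient $R_{H,1}>0$ is only assumed to be a real analytic function on $\overline{\Omega}$, since the material parameters may vary in space. Hence $\nabla\cdot\mathbf{b}_1=\nabla R_{H,1}\cdot\mathbf{B}_1$ need not vanish, and neither need the flux of $\mathbf{b}_1$.

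The repair is immediate. In the anti-aligned case write $\mathbf{B}_1=\nabla\psi/(2cR_{H,1})$, so that $\psi$ solves the uniformly elliptic equation $\nabla\cdot\bigl((cR_{H,1})^{-1}\nabla\psi\bigr)=0$. Then either the maximum principle or the identity
\begin{equation*}
 \int_\Omega \frac{|\nabla\psi|^2}{2cR_{H,1}}\,dx=\sum_j\psi|_{\Gamma_j}\int_{\Gamma_j}\mathbf{B}_1\cdot\mathbf{n}\,dS=0
\end{equation*}
finishes the argument as before.

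With that correction your route is valid, but it differs from the paper's. You extract the full pointwise information $\mathbf{b}_1\mathbf{b}_1^T=\mathbf{b}_2\mathbf{b}_2^T$ from $\gamma$. You then use real analyticity on the connected set $\Omega$ to select one global sign, which reduces $\mathbf{V}$ to a multiple of a divergence-free field. Your phrase ``connected or dense enough'' should be replaced by a precise argument, since $\{\mathbf{b}_1\neq0\}$ can be disconnected. It suffices to note that $|\mathbf{b}_1-\mathbf{b}_2|^2\,|\mathbf{b}_1+\mathbf{b}_2|^2\equiv0$, and that real analytic functions on a connected domain form an integral domain.

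The paper uses only $|\mathbf{b}_1|=|\mathbf{b}_2|$. It disposes of the obstacle you identified, that $\nabla\cdot\mathbf{V}$ is not controlled, in one line. Equal lengths give $|\mathbf{b}_1-\mathbf{b}_2|^2=-2(\mathbf{b}_1-\mathbf{b}_2)\cdot\mathbf{b}_2$, and hence, with $\psi|_{\partial\Omega}=0$,
\begin{equation*}
 \int_\Omega\frac{c}{R_{H,2}}\,|\mathbf{b}_1-\mathbf{b}_2|^2\,dx
 =-2\int_\Omega\nabla\psi\cdot\mathbf{B}_2\,dx
 =-2\int_{\partial\Omega}\psi\,\mathbf{B}_2\cdot\mathbf{n}\,dS=0.
\end{equation*}
The same boundary term vanishes componentwise under the zero-flux hypothesis. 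This route needs no case split and no elliptic equation for $\psi$. It also needs no analyticity of $\mathbf{b}_m$ at this step. Your version uses more structure, namely the direction of $\mathbf{b}$ and analyticity, to reach the same conclusion.
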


\begin{proof}
Direct matrix algebra applied to \eqref{eq:Hall-effect} gives
\begin{equation*}
 \mathcal{J}=(I-\sigma W)^{-1}\sigma\mathbf{E}
 =(\gamma+\varphi)\mathbf{E},\qquad
 \gamma=(\sigma^{-1}-W\sigma W)^{-1},\qquad
 \varphi=\gamma W\sigma.
\end{equation*}
Here $\gamma$ is symmetric positive definite and $\varphi$ is antisymmetric.

For the isotropic specialization above, these identities give
\begin{equation*}
 \lambda_1(\gamma)=\lambda_2(\gamma)
 =\frac{\overline{\sigma}}{1+\overline{\sigma}^2|\mathbf{b}|^2},
 \qquad \lambda_3(\gamma)=\overline{\sigma},\qquad
 \varphi\mathbf{v}
 =-\frac{\overline{\sigma}^2}{1+\overline{\sigma}^2|\mathbf{b}|^2}
       \mathbf{b}\times\mathbf{v} .
\end{equation*}
Under the fixed-coordinate assumption in Proposition \ref{prop:hall-recovery}, Theorem \ref{thm:main} gives $\gamma_1=\gamma_2=:\gamma$. The extreme eigenvalues then give
\begin{equation*}
 \overline{\sigma}_1=\overline{\sigma}_2
 =\lambda_{\max}(\gamma)=:\overline{\sigma},\qquad
 |\mathbf{b}_1|^2=|\mathbf{b}_2|^2
 =\frac{1}{\overline{\sigma}\lambda_{\min}(\gamma)}
  -\frac{1}{\overline{\sigma}^2}.
\end{equation*}
These formulae include the case $\mathbf{b}_m=0$, when the eigenvalues coincide. Write the remaining antisymmetric difference as
\begin{equation*}
    (\varphi_1-\varphi_2)\mathbf{v}=(\delta\varphi)\mathbf{v}=:-\widehat{\mathbf{B}}\times\mathbf{v}.
\end{equation*}
The difference $\delta\varphi$ is invisible precisely when
\begin{equation*}
 \nabla\times\widehat{\mathbf{B}}=0\quad\text{in }\Omega,\qquad
 \mathbf{n}\times\widehat{\mathbf{B}}=0\quad\text{on }\partial\Omega.
\end{equation*}

First suppose that $\partial\Omega$ is connected. Put $a=\overline{\sigma}\lambda_{\min}(\gamma)>0$ and $\widehat{\mathbf{B}}=a(\mathbf{b}_1-\mathbf{b}_2)$. Since $\Omega$ is simply connected, the preceding curl and boundary conditions give $\widehat{\mathbf{B}}=\nabla\psi$ with $\psi|_{\partial\Omega}=0$. Using $|\mathbf{b}_1|=|\mathbf{b}_2|$ and $\mathbf{b}_2=R_{H,2}\mathbf{B}_2$, we expand the square and apply Stokes' theorem in divergence form:
\begin{equation*}
 \begin{aligned}
 \int_\Omega\frac{|\widehat{\mathbf{B}}|^2}{aR_{H,2}}\,dx
 &=\int_\Omega\frac{a}{R_{H,2}}
 \left((\mathbf{b}_1-\mathbf{b}_2)\cdot(\mathbf{b}_1+\mathbf{b}_2)-2(\mathbf{b}_1-\mathbf{b}_2)\cdot\mathbf{b}_2\right)\,dx\\
 &=-2\int_\Omega\frac{a}{R_{H,2}}
 (\mathbf{b}_1-\mathbf{b}_2)\cdot\mathbf{b}_2\,dx\\
 &=-2\int_\Omega\widehat{\mathbf{B}}\cdot\mathbf{B}_2\,dx
 =-2\int_\Omega\nabla\psi\cdot\mathbf{B}_2\,dx\\
 &=
 -2\int_{\partial\Omega}\psi\,\mathbf{B}_2\cdot\mathbf{n}\,dS
 +2\int_\Omega\psi\,(\nabla\cdot\mathbf{B}_2)\,dx\\
 &=0.
 \end{aligned}
\end{equation*}
The boundary term vanishes because $\psi|_{\partial\Omega}=0$, and the volume term vanishes by the physical constraint $\nabla\cdot\mathbf{B}_2=0$. Since $aR_{H,2}>0$, this identity forces $\widehat{\mathbf{B}}=0$ and hence $\mathbf{b}_1=\mathbf{b}_2$. If $\partial\Omega$ is disconnected, $\psi$ is constant on each component $\Gamma_j$, so zero magnetic flux through every component again makes the boundary term vanish in the same Stokes identity.

Thus the physical divergence constraint, together with the stated geometric and boundary conditions, removes the antisymmetric obstruction and yields uniqueness of the effective Hall field $\mathbf{b}=R_H\mathbf{B}$ in the fixed Euclidean coordinates. A known positive $R_H$ determines $\mathbf{B}$. If the same $\mathbf{B}$ is known instead, the symmetric-part identities already determine $R_H=|\mathbf{b}|/|\mathbf{B}|$ wherever $\mathbf{B}\ne0$. If both factors are unknown, the rescaling $(R_H,\mathbf{B})\mapsto(cR_H,\mathbf{B}/c)$, $c>0$ constant, preserves the entire current law and the divergence constraint.
\end{proof}


\section{Poisson embeddings and global extension for elliptic PDOs}
\label{app:poisson-continuation}

We use the Runge theorem of \cite[Proposition~A.2]{lassas2020poisson} directly and record the short reductions needed for our operators. The embedding and continuation arguments follow \cite[Propositions~2.1--2.3, Theorem~3.3, and Appendix~B]{lassas2020poisson}, whose stated harmonic setting is replaced here by real elliptic partial differential operators with $L1=0$, and all boundary evaluations are placed in one fixed space $H^{-r}(\partial M)$.

\subsection{Runge realization of one-jets}\label{app:runge-jets}
For the corresponding prescribed-jet argument for $\Delta_g+q$, see \cite[Propositions~3.3--3.4 and Appendix~A.2]{carstea2026conformalpoisson}. We reduce the prescribed one-jet realization to the Runge approximation in \cite[Proposition~A.2]{lassas2020poisson}, using local analytic solvability to supply the desired jets.
\begin{proof}[Proof of Lemma \ref{lmm:Runge-jets}]
Choose disjoint small coordinate balls around the prescribed points with connected complement. The cited proposition applies to the present uniformly elliptic $L$ with invertible Dirichlet problem. Unique continuation for $L$ and its adjoint holds here. The same theorem is used on disjoint balls in the proof of Proposition~A.1(a) there. Interior elliptic estimates give $C^1$ approximation on smaller balls.

At each point, Cauchy--Kowalevski on an analytic noncharacteristic hypersurface produces a local solution with arbitrary prescribed value and first derivative.

Thus the image of the global solution space in the finite-dimensional space of one-jets is dense. It is a linear subspace and hence closed, so every prescribed finite collection of one-jets is attained. This is the finite-dimensional argument of \cite[Proposition~A.5]{lassas2020poisson}, with local analytic solvability supplying the jets for $L$.
\end{proof}

\subsection{Estimates for the compactified Poisson embedding}
\label{app:poisson-estimates}
We reduce the compactified embedding statement to the estimates and separation arguments of \cite[Propositions~2.1--2.3 and Appendix~B]{lassas2020poisson}, verifying them below for the present real elliptic operator with $L1=0$ and the fixed target space $H^{-r}(\partial M)$. See also \cite[Propositions~3.2, 3.5 and Lemma~3.8]{carstea2026conformalpoisson} for the corresponding regularity and embedding arguments for $\Delta_g+q$.
\begin{proof}[Proof of Lemma \ref{lmm:compactified-poisson}]
Choose $0<\alpha<\min\{1,r-(n-1)/2-2\}$. Sobolev embedding, the boundary extension and Schauder estimates \cite[Lemma~6.38 and Theorem~6.6]{gilbarg2001elliptic}, and the maximum principle for $L1=0$ give
\begin{equation*}
 \|w^h\|_{C^{2,\alpha}(M)}\leq C\|h\|_{H^r(\partial M)}.
\end{equation*}
This estimate extends the evaluation functionals to $H^r$ and makes $\overline{\mathscr{P}}_L$ norm-continuous on $M$. In collar coordinates $x=(y,t)$, integration of the normal derivative gives $\|\overline{\mathscr{P}}_L(x)-\delta_y\|_{H^{-r}}\leq Ct$. Tangential integration similarly gives $\|\delta_y-\delta_z\|_{H^{-r}}\leq C d_{\partial M}(y,z)$ within a boundary chart.

For interior analyticity, the local analytic estimates in \cite[Theorem~5.7.1, pp.~164--167]{morrey1966multiple} yield, on each compact interior coordinate ball $K$,
\begin{equation*}
 \|\partial^\beta w^h\|_{C^0(K)}
 \leq C_K^{|\beta|+1}|\beta|!\,\|h\|_{H^r(\partial M)}.
\end{equation*}
The constant is independent of $h$. Taking the supremum over the unit ball of $H^r$ in Taylor's formula gives a remainder bounded by $C_K(C_K|z|_1)^{N+1}$ on a smaller ball, where $|z|_1=\sum_j|z_j|$. Hence the Taylor series converges in $H^{-r}$, proving analyticity of the interior Poisson embedding.

Lemma \ref{lmm:Runge-jets} supplies the point separation and injective differential used in \cite[Propositions~2.2--2.3]{lassas2020poisson}. Boundary evaluations separate boundary points. To separate $x\in M^\circ$ from $y\in\partial M$, choose nonzero nonnegative boundary data supported away from $y$. The strong maximum principle gives $w^h(x)>0=h(y)$. Finally, a continuous injection from compact $M$ to the Hausdorff space $H^{-r}$ is a homeomorphism onto its image. Thus $\overline{\mathscr{P}}_L$ is a topological embedding of $M$, and its restriction $\mathscr{P}_L$ is a real analytic embedding of $M^\circ$.
For the local map, the estimates above restrict continuously to $E_\Gamma$. Runge approximation with data in $C_c^\infty(\Gamma)$ gives interior point separation and immersion. Boundary evaluations separate points of $\Gamma$, and nonnegative data supported away from a given boundary point separate it from every interior point by the strong maximum principle. The map vanishes on $\partial M\setminus\Gamma$ and is nonzero on $M^\circ\cup\Gamma$. If $\mathscr{P}_L^\Gamma(x_j)\to\mathscr{P}_L^\Gamma(x)$ with $x\in M^\circ\cup\Gamma$, compactness gives subsequential limits in $M$. None lies in the unobserved boundary, where the map is zero, and injectivity on $M^\circ\cup\Gamma$ forces every limit to be $x$. This proves continuity of the inverse and completes the local embedding assertion of Lemma \ref{lmm:compactified-poisson}.
\end{proof}

\subsection{Global extension via Poisson embeddings}
\label{app:poisson-globalization}
We reduce the global identification to the continuation argument of \cite[Theorem~3.3]{lassas2020poisson}, using the local Poisson embeddings for the present real elliptic operators. See also \cite[Lemma~3.9 and Theorem~5.3]{carstea2026conformalpoisson} for the corresponding coordinate and continuation arguments for scaled identifications of Schr\"odinger solutions.
\begin{proof}[Proof of Lemma \ref{lmm:poisson-continuation}]
Write $R_m=\mathscr{P}_{L_m}^\Gamma$, including its continuous extension to $M$. By Lemma \ref{lmm:compactified-poisson}, the boundary maps $F_p$ agree on overlaps, since they all satisfy $R_2=R_1\circ F_p$. Choose one connected collar at $\Gamma$ as a seed and let $\mathcal{D}\subset M^\circ$ be the maximal connected open set on which it continues to an analytic local diffeomorphism $\Phi_{\mathcal{D}}$ preserving all solutions with boundary data in $C_c^\infty(\Gamma)$. Injectivity of $R_1$ makes the continuation unique.

If $x_k\in\mathcal{D}$ tends to $x\in M^\circ$, compactness and continuity give a subsequential limit $\Phi_{\mathcal{D}}(x_k)\to y\in M$ with $R_2(x)=R_1(y)$. The point $y$ is interior: otherwise nonnegative nonzero data in $C_c^\infty(\Gamma\setminus\{y\})$ would give a solution positive at $x$ and zero at $y$. Choose, by Lemma \ref{lmm:Runge-jets}, boundary data $h_j\in C_c^\infty(\Gamma)$ giving an analytic solution coordinate map $W_1=(w_1^{h_1},\ldots,w_1^{h_n})$ near $y$, and set $W_2=(w_2^{h_1},\ldots,w_2^{h_n})$. Since $W_2(x)=W_1(y)$, the analytic map

\begin{equation*}
 \widetilde{\Phi}=W_1^{-1}\circ W_2
\end{equation*}
is defined near $x$. No invertibility of $W_2$ has yet been assumed. The continuous inverse of $R_1$ on $R_1(M^\circ\cup\Gamma)$ ensures that $\Phi_{\mathcal{D}}$ takes all nearby points of $\mathcal{D}$ into this same $W_1$ chart. Thus $\widetilde{\Phi}=\Phi_{\mathcal{D}}$ on a nonempty open part of a small connected neighborhood $U$ of $x$.

For every $h\in C_c^\infty(\Gamma)$, analytic continuation on $U$ now gives
\begin{equation*}
 w_2^h=(w_1^h\circ W_1^{-1})\circ W_2.
\end{equation*}
If $v\in\ker dW_2(x)$, these identities give $dw_2^h(x)v=0$ for every $h$. Lemma \ref{lmm:Runge-jets} implies $v=0$. Hence $\widetilde{\Phi}$ is a local diffeomorphism and extends the identification across $x$. Therefore $\mathcal{D}$ is closed, and connectedness gives $\mathcal{D}=M^\circ$.

Interchanging the operators gives a map $\Psi$ in the opposite direction. Poisson injectivity yields $\Psi\circ\Phi=\mathrm{id}$ and $\Phi\circ\Psi=\mathrm{id}$, so $\Phi$ is a diffeomorphism of the interiors. The same injectivity makes $\Phi$ agree with each $F_p$, giving a unique $\Gamma$-fixing real analytic extension to $M^\circ\cup\Gamma$. If the collar maps preserve orientation, connectedness makes $\Phi$ orientation-preserving throughout $M^\circ\cup\Gamma$.

\end{proof}

\section{Magnetic boundary determination}
\label{app:magnetic-boundary-jets}

Boundary determination from the magnetic Schr\"odinger DN map follows the symbol factorization of \cite[Section~8]{dossantosferreira2009limiting}, with the boundary normalization corrected in \cite[Lemma~2.1(b)]{LassasMatti2022TCpf}. The gauges first remove the freedom that the boundary symbol cannot distinguish. The odd and even parts of that symbol then recover successive magnetic and metric jets. A trace identity separates the electric potential from the remaining metric derivative. We finally use the local DN data to recover solution jets and return to the original gauges. This last step is needed for the solution-coordinate construction in Subsection \ref{subsec:analytic-extension}.

\begin{proof}[Proof of Lemma \ref{lmm:magnetic-boundary-jets}]
\emph{Normalization.} The hypotheses give equal density-valued magnetic Schr\"odinger measurements on $\Gamma$ for data in $\mathcal{H}_\Gamma$, and equal boundary metrics on $\Gamma$. We first record the normalization in its original conformal convention. By \cite[Lemma~2.1(b), (2.1)]{LassasMatti2022TCpf}, there is a positive smooth function $\widehat{c}$ such that, for $\widehat{g}=\widehat{c} g$,
\begin{equation*}
 \widehat{c}|_{\partial M}=1,\qquad
 \partial_{\nu_g}\widehat{c}|_{\partial M}=0,\qquad
 (\mathcal{L}_{\widehat{N}})^j\widehat{H}|_{\partial M}=0
 \quad(j\geq1).
\end{equation*}
Here $\widehat{N}$ is the inward $\widehat{g}$-unit normal extended along its normal geodesics, and $\widehat{H}=\Delta_{\widehat{g}}d_{\widehat{g}}(\,\cdot\,,\partial M)$. Apply this separately to $g_m$ and put $c_m=\widehat{c}_m^{-1}$. The constant boundary trace also makes the tangential derivatives of $\widehat{c}_m$ vanish, so
\begin{equation*}
 c_m>0,\qquad c_m|_{\partial M}=1,\qquad dc_m|_{\partial M}=0,
 \qquad \widetilde{g}_m=c_m^{-1}g_m.
\end{equation*}
Let $M_m$ denote the $m$th copy of $M$, equipped with $\widetilde{g}_m$, and choose a common boundary coordinate neighborhood $U\Subset\Gamma$ of the point under consideration. Use the same tangential coordinates $x'$ on $U$ and the respective inward $\widetilde{g}_m$-normal geodesics to define $\chi_m:\mathcal{C}\to M_m$, where $\mathcal{C}=U\times[0,\varepsilon)$. After shrinking $U$ if necessary, choose the common width $\varepsilon>0$ sufficiently small that both $\chi_m$ are diffeomorphisms onto their collar neighborhoods. In these boundary normal coordinates, write
\begin{equation*}
 \chi_m^*\widetilde{g}_m=dt^2+h_{m,ab}\,dx^a dx^b,\qquad
 G_m^{ab}=h_m^{ab},\qquad
 E_m=\tfrac{1}{2}h_{m,ab}\partial_tG_m^{ab}.
\end{equation*}
In these coordinates $\widehat{N}=\partial_t$ and
\begin{equation*}
 \widehat{H}_m
 =\frac{1}{\sqrt{\det h_m}}\partial_t\sqrt{\det h_m}
 =\tfrac{1}{2}G_m^{ab}\partial_t h_{m,ab}.
\end{equation*}
Differentiating $h_{m,ac}G_m^{cb}=\delta_a^{\ b}$ and taking the trace gives
\begin{equation*}
 G_m^{ab}\partial_t h_{m,ab}
 =-h_{m,ab}\partial_tG_m^{ab},\qquad E_m=-\widehat{H}_m.
\end{equation*}
Consequently $\partial_t^jE_m|_{t=0}=0$ for every $j\geq1$. These are the new metric's own boundary normal coordinates, as required by the corrected normalization.

The conformal choice fixes the higher normal derivatives of the trace part of the metric. Independently, a magnetic gauge removes the normal component of the one-form without changing the boundary measurement. Choose a purely imaginary $\psi_m$, zero on $\partial M$, by
\begin{equation*}
 \psi_m\circ\chi_m(x',t)
 =-\int_0^t(\chi_m^* A_m)_t(x',t')\,dt'
\end{equation*}
on a smaller collar, and extend it as a purely imaginary smooth function to $M$, preserving its boundary value. Then $\widetilde{A}_m=A_m+d\psi_m$ has zero normal component throughout that collar. Put $\alpha=(n-2)/4$ and
\begin{equation*}
 \rho_m=c_m^\alpha e^{-i\psi_m}>0,\qquad
 \widetilde{q}_m
 =c_m\bigl(q_m-c_m^\alpha\Delta_{g_m}(c_m^{-\alpha})\bigr).
\end{equation*}
The two identities in the proof of \cite[Proposition~8.2]{dossantosferreira2009limiting}, written with our notation $\mathcal{P}$, are
\begin{equation*}
 c^{(n+2)/4}\mathcal{P}_{g,A,q}(c^{-(n-2)/4}v)
 =\mathcal{P}_{c^{-1}g,A,c(q-q_c)}v,\qquad
 q_c=c^{(n-2)/4}\Delta_g(c^{-(n-2)/4}),
\end{equation*}
and
\begin{equation*}
 e^{-i\psi}\mathcal{P}_{g,A,q}(e^{i\psi}v)
 =\mathcal{P}_{g,A+d\psi,q}v.
\end{equation*}
Applying the magnetic identity after the conformal identity gives
\begin{align*}
 \widetilde{\mathcal{P}}_m v
 &:=\mathcal{P}_{c_m^{-1}g_m,A_m+d\psi_m,\widetilde{q}_m}v\\
 &=e^{-i\psi_m}c_m^{1+\alpha}\mathcal{P}_m
       (c_m^{-\alpha}e^{i\psi_m}v)
 =c_m\rho_m\mathcal{P}_m(\rho_m^{-1}v).
\end{align*}
The identity also preserves Dirichlet nondegeneracy. It preserves the density-valued magnetic Schr\"odinger map exactly. First,
\begin{equation*}
 \widetilde{\gamma}_m^{ij}
 =\sqrt{\det(c_m^{-1}g_m)}(c_m^{-1}g_m)^{ij}
 =c_m^{1-n/2}\gamma_m^{ij}=c_m^{-2\alpha}\gamma_m^{ij}.
\end{equation*}
Also $d\rho_m=\rho_m(\alpha\,d\log c_m-i\,d\psi_m)$, and hence
\begin{equation*}
 d_{\widetilde{A}_m}(\rho_m w)
 =\rho_m\bigl(d_{A_m}w+\alpha\,d\log c_m\,w\bigr).
\end{equation*}
Their density-valued magnetic conormal derivatives are unchanged because $c_m=\rho_m=1$ and $dc_m=0$ there. In particular, no vanishing normal derivative of $\rho_m$ is needed: its magnetic contribution cancels the $i\,d\psi_m$ term. The boundary metric is also unchanged, so division by its common volume density gives equal scalar magnetic Schr\"odinger maps, as used in the cited factorization.

\emph{Coefficient jets.} Work in the new boundary normal coordinates and pull back all normalized coefficients by $\chi_m$. We reproduce the factorization and its recursion before extracting the coefficient jets. Use left quantization and $D_x=-i\partial_x$. In \cite[Lemma~8.6]{dossantosferreira2009limiting}, $x_n$ and $g^{\alpha\beta}$ correspond to our $t$ and $G^{ab}$. We write $V$ for its zeroth-order coefficient $\widetilde{G}$. Suppressing the index $m$, set
\begin{align*}
 q_2&=G^{ab}\xi_a\xi_b,\\
 q_1&=-i\left(\tfrac{1}{2}G^{ab}\partial_a\log\det h
                    +\partial_aG^{ab}\right)\xi_b,\\
 V&=-\frac{i}{\sqrt{\det h}}\partial_a
       \left(\sqrt{\det h}\,G^{ab}\widetilde{A}_b\right)
       +G^{ab}\widetilde{A}_a\widetilde{A}_b+\widetilde{q}.
\end{align*}
Here $q_2,q_1$ are tangential symbols, distinct from the electric potential $\widetilde{q}$. Since $\widetilde{A}_t=0$, the operator is
\begin{equation*}
 \widetilde{\mathcal{P}}
 =D_t^2+iED_t+q_2(x,D_{x'})+q_1(x,D_{x'})
       +2G^{ab}\widetilde{A}_aD_b+V.
\end{equation*}
The original factorization and symbol equation \cite[(8.3)--(8.4)]{dossantosferreira2009limiting} read, in this notation,
\begin{equation*}
 \widetilde{\mathcal{P}}
 =(D_t+iE-iB)(D_t+iB)\quad\bmod\Psi^{-\infty},
\end{equation*}
\begin{equation*}
 \partial_tb-Eb+
 \sum_{\beta}\frac{\partial_\xi^\beta b\,D_{x'}^\beta b}{\beta!}
 =q_2+q_1+2G^{ab}\widetilde{A}_a\xi_b+V\quad\bmod S^{-\infty}.
\end{equation*}
Here $\beta$ ranges over tangential multi-indices. Writing $b\sim\sum_{\ell\leq1}b_\ell$, comparison of homogeneous degrees gives \cite[(8.5)--(8.8)]{dossantosferreira2009limiting}:
\begin{align*}
 b_1&=-\sqrt{q_2},\\
 b_0&=\frac{1}{2b_1}\left(
 -\partial_tb_1+Eb_1-\partial_\xi b_1\cdot D_{x'}b_1
       +q_1+2G^{ab}\widetilde{A}_a\xi_b\right),\\
 b_{-1}&=\frac{1}{2b_1}\left(
 -\partial_tb_0+Eb_0
 -\sum_{\substack{0\leq p,k\leq1\\p+k=|\beta|}}
   \frac{\partial_\xi^\beta b_p\,D_{x'}^\beta b_k}{\beta!}+V\right),\\
 b_{\ell-1}&=\frac{1}{2b_1}\left(
 -\partial_tb_\ell+Eb_\ell
 -\sum_{\substack{\ell\leq p,k\leq1\\p+k-|\beta|=\ell}}
   \frac{\partial_\xi^\beta b_p\,D_{x'}^\beta b_k}{\beta!}\right),
 \qquad\ell\leq-1.
\end{align*}
Each sum includes all displayed integers and all admissible $\beta$. The same lemma gives $\widetilde{\Lambda}=-B|_{t=0}+R$ with $R$ smoothing. Thus equality of the normalized DN maps makes every homogeneous component $b_{m,\ell}|_{t=0}$ agree.

Put $r=\sqrt{q_2}$ and $\omega=\xi/r$, where $\xi\ne0$. The principal symbol gives $G|_{t=0}$. The tangential part of $b_0$ is
\begin{equation*}
 R_0:=\frac{-\partial_\xi b_1\cdot D_{x'}b_1+q_1}{2b_1}
     =\frac{\partial_\xi r\cdot D_{x'}r-q_1}{2r},
\end{equation*}
which depends only on $G$ and its tangential derivatives. Since
\begin{equation*}
 \partial_tb_1=-\frac{(\partial_tG^{ab})\xi_a\xi_b}{2r},
 \qquad G^{ab}\omega_a\omega_b=1,
\end{equation*}
substitution into the displayed equation for $b_0$ gives
\begin{align*}
 b_0-R_0
 &=-\frac{(\partial_tG^{ab})\xi_a\xi_b}{4r^2}
      +\frac{E}{2}-\frac{G^{ab}\widetilde{A}_a\xi_b}{r}\\
 &=-\tfrac{1}{4} k^{ab}\omega_a\omega_b
       -G^{ab}\widetilde{A}_a\omega_b,\qquad
 k^{ab}:=\partial_tG^{ab}-2EG^{ab}.
\end{align*}
The odd and even parts in $\omega$ determine $\widetilde{A}_a$ and $k^{ab}$, respectively. Since
\begin{equation*}
 h_{ab}k^{ab}=-2(n-2)E,\qquad
 \partial_tG^{ab}=k^{ab}+2EG^{ab},
\end{equation*}
they also determine $E$ and $\partial_tG$. This step is nondegenerate for every $n\geq3$.

At higher orders, the even symbol initially determines a combination of a metric derivative and an electric-potential derivative, not either separately. The prescribed derivatives of $E_m$ supply the missing trace information. We make this simultaneous induction explicit. Let $\delta$ denote the difference between the two normalized coefficient sets, evaluated at $t=0$. Suppose, for $j\geq1$, that
\begin{equation*}
 \delta\partial_t^\ell G=0\ (0\leq\ell\leq j),\qquad
 \delta\partial_t^\ell\widetilde{A}_a=0\ (0\leq\ell\leq j-1),\qquad
 \delta\partial_t^\ell\widetilde{q}=0\ (0\leq\ell\leq j-2),
\end{equation*}
with the last range empty when $j=1$. To derive the new jets from the original recursion, introduce the combination used in \cite[Lemma~8.7]{dossantosferreira2009limiting},
\begin{equation*}
 l^{ab}:=\tfrac{1}{4}\partial_tk^{ab}+\widetilde{q}G^{ab}.
\end{equation*}
The displayed formula for $b_{-1}$ gives
\begin{equation*}
 b_{-1}=\frac{1}{2b_1}
 \left(l^{ab}\omega_a\omega_b
       +G^{ab}(\partial_t\widetilde{A}_a)\omega_b\right)+R_1.
\end{equation*}
Indeed, differentiating $b_0-R_0$ contributes $\tfrac{1}{4}(\partial_tk^{ab})\omega_a\omega_b +G^{ab}(\partial_t\widetilde{A}_a)\omega_b$ to $-\partial_tb_0$, while $V$ contributes $\widetilde{q}=\widetilde{q}G^{ab}\omega_a\omega_b$. All other terms form $R_1$, which depends only on $G,\partial_tG$, $\widetilde{A}$, and their tangential derivatives.

At subsequent orders, the part raising the maximal normal derivative is
\begin{equation*}
 b_{-j-1}=-\frac{1}{2b_1}\partial_tb_{-j}
       +\text{terms involving lower normal jets}.
\end{equation*}
Iterating this identity yields the formula in the proof of \cite[Lemma~8.7]{dossantosferreira2009limiting}, for $j\geq1$,
\begin{equation*}
 b_{-j}=-\left(-\frac{1}{2b_1}\right)^j
 \left[(\partial_t^{j-1}l^{ab})\omega_a\omega_b
       +G^{ab}(\partial_t^j\widetilde{A}_a)\omega_b\right]+R_j.
\end{equation*}
The remainder depends only on $G,\partial_tG$, derivatives of $l$ through order $j-2$, and derivatives of $\widetilde{A}$ through order $j-1$, together with tangential derivatives. The $l$ range is empty when $j=1$. These remainders therefore agree under the induction hypothesis. Since every positive normal derivative of $E_m$ vanishes, the product rule gives
\begin{align*}
 \delta\partial_t^{j-1}l^{ab}
 &=\tfrac{1}{4}\delta\partial_t^j(\partial_tG^{ab}-2EG^{ab})
       +\delta\partial_t^{j-1}(\widetilde{q}G^{ab})\\
 &=\tfrac{1}{4}\delta\partial_t^{j+1}G^{ab}
       +G^{ab}\delta\partial_t^{j-1}\widetilde{q}.
\end{align*}
Using $b_1=-r$ and subtracting the two formulae for $b_{-j}$ now yields
\begin{equation*}
 0=\delta b_{-j}=-(2r)^{-j}
 \left[
  \tfrac{1}{4}\delta(\partial_t^{j+1}G^{ab})\omega_a\omega_b
  +\delta(\partial_t^{j-1}\widetilde{q})
  +G^{ab}\delta(\partial_t^j\widetilde{A}_a)\omega_b
 \right].
\end{equation*}

The odd part gives $\delta\partial_t^j\widetilde{A}_a=0$, and the even part gives
\begin{equation*}
 \delta(\partial_t^{j+1}G^{ab})
 =-4\,\delta(\partial_t^{j-1}\widetilde{q})\,G^{ab}.
\end{equation*}
On the other hand, differentiating $2E=h_{ab}\partial_tG^{ab}$ gives
\begin{equation*}
 2\partial_t^jE
 =h_{ab}\partial_t^{j+1}G^{ab}
  +\sum_{\ell=1}^j\binom{j}{\ell}
       (\partial_t^\ell h_{ab})(\partial_t^{j+1-\ell}G^{ab}).
\end{equation*}
The lower metric jets agree, and $\partial_t^jE_m=0$, so $h_{ab}\delta(\partial_t^{j+1}G^{ab})=0$. Taking the trace of the preceding even-part identity, with $h_{ab}G^{ab}=n-1$, therefore gives $(n-1)\delta(\partial_t^{j-1}\widetilde{q})=0$. This recovers both new jets and closes the induction. All equalities hold on the open boundary subset $U$ under consideration, so tangential differentiation recovers all mixed jets as well. The remaining components are fixed by $\widetilde{g}_{tt}=1$, $\widetilde{g}_{ta}=0$, and $\widetilde{A}_t=0$. Hence
\begin{equation*}
 \widetilde{\mathcal{P}}_2^{\mathcal{C}}
 -\widetilde{\mathcal{P}}_1^{\mathcal{C}}=O(t^\infty).
\end{equation*}
The parity and trace arguments are algebraic over $\mathbb{C}$, consistent with the complex magnetic convention of \cite[Section~1]{dossantosferreira2009limiting}. They do not require a unitary connection.

\emph{Solution jets and removal of the gauges.} We use the normal-derivative recursion of \cite[Lemma~3.4]{LassasMatti2022TCpf}. Its proof applies to operators with the same coefficient jets, a nonzero $\partial_t^2$ coefficient, and equal Cauchy traces, including the magnetic lower-order terms here. The local DN data supply the required Cauchy data for $h\in\mathcal{H}_\Gamma$. Set $c_m^{\mathcal{C}}=c_m\circ\chi_m$, $\rho_m^{\mathcal{C}}=\rho_m\circ\chi_m$, and $\widetilde{U}_m^h=\rho_m^{\mathcal{C}}U_m^h$. These solutions have the same Dirichlet value $h|_U$. In the normal magnetic gauge their outward density-valued magnetic conormal derivative is $-\sqrt{\det h_m}\,\partial_t\widetilde{U}_m^h\,d\sigma_0$, where $d\sigma_0$ is the positive coordinate density in $x'$. Equality of the measured DN maps on $\Gamma$, including their smoothing parts, therefore gives
\begin{equation*}
 \widetilde{U}_2^h-\widetilde{U}_1^h
 =\partial_t(\widetilde{U}_2^h-\widetilde{U}_1^h)=0
 \quad\text{at }t=0.
\end{equation*}
The coefficient of $\partial_t^2$ is $-1$ and the other terms contain at most one normal derivative. The cited recursion, applied to the difference equation with right-hand side $O(t^\infty)$, therefore gives $\widetilde{U}_2^h-\widetilde{U}_1^h=O(t^\infty)$ for every $h\in\mathcal{H}_\Gamma$. Finally put
\begin{equation*}
 \kappa_0=\frac{\rho_1^{\mathcal{C}}}{\rho_2^{\mathcal{C}}},\qquad
 \lambda_0=\frac{c_1^{\mathcal{C}}}{c_2^{\mathcal{C}}}.
\end{equation*}
Undoing the gauges gives explicitly
\begin{equation*}
 (c_2^{\mathcal{C}}\rho_1^{\mathcal{C}})^{-1}
 (\widetilde{\mathcal{P}}_2^{\mathcal{C}}
   -\widetilde{\mathcal{P}}_1^{\mathcal{C}})\rho_1^{\mathcal{C}}
 =\kappa_0^{-1}\mathcal{P}_2^{\mathcal{C}}\kappa_0
      -\lambda_0\mathcal{P}_1^{\mathcal{C}}=O(t^\infty),
\end{equation*}
which is \eqref{eq:formal-operator-package}. Dividing the solution identity by $\rho_2^{\mathcal{C}}$ gives \eqref{eq:formal-solution-package}. Positivity and the boundary normalizations give \eqref{eq:formal-boundary-normalization}.
\end{proof}

\section{A potential-dependent change of variables}
\label{app:background-dependent-gauge}

We construct the counterexample announced in Remark \ref{rem:background-identification}. It shows why translation covariance restricted to constant boundary data does not fix the spatial identification. The example does not satisfy the full translation covariance used in Proposition \ref{prop:analytic-continuation}.

\begin{proposition}[Constant backgrounds do not fix the identification]\label{prop:background-counterexample}
On the closed unit ball in $\mathbb{R}^n$, $n\geq3$, there exist real analytic currents $\mathcal{J}_1,\mathcal{J}_2$ with the following properties:
\begin{enumerate}[label=(\roman*)]
 \item Both currents vanish at zero gradient and are uniformly elliptic near each constant background. Their chosen constant solutions satisfy $u_m^s\equiv s$, and hence $u_m^{s+t}=u_m^s+t$.
 \item Their nonlinear DN maps agree on a common neighborhood of every constant boundary value. Each current admits a representation $\mathcal{J}_m(x,s,\xi)=\gamma_m(x,s,\xi)\xi$ with $\gamma_m$ real analytic, symmetric, and positive definite for all real $(s,\xi)$.
 \item The identifications of their first linearized Dirichlet solution families are $\Phi_s=F_s^{-1}$, where
 \begin{equation*}
  F_s(x)=x+\varepsilon\tanh(s)(1-|x|^2)e_1,\qquad 0<\varepsilon<\tfrac{1}{4},
 \end{equation*}
 and no single boundary-fixing map gives these identifications for every $s$.
\end{enumerate}
The derivative ellipticity in (i) is asserted only near zero gradient. The full translation covariance in \eqref{condition:elliptic} fails for the second current.
\end{proposition}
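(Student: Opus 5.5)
The counterexample will be built by transporting a simple current through the $s$-dependent diffeomorphism $F_s$. This makes the DN maps agree while the identification genuinely moves with the background.

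Start from the quasilinear current $\mathcal{J}_1(x,s,\xi)=\xi$, the Euclidean Laplacian current. It is analytic, vanishes at $\xi=0$, and is uniformly elliptic. It is potential independent, so $u_1^s\equiv s$ and full translation covariance hold. Define
\begin{equation*}
 \mathcal{J}_2(y,s,\eta):=(H_{F_s}\mathcal{J}_1)(y,s,\eta)
 =\frac{DF_s(x)DF_s(x)^T\eta}{\det DF_s(x)},\qquad x=F_s^{-1}(y).
\end{equation*}
The frozen-$s$ push-forward is not enough, because $u$ enters through $s$. So I would check the pointwise identity directly. If $v$ solves $\Delta v=0$, then $u=v\circ F_{v}^{-1}$ should solve the $\mathcal{J}_2$ equation, where the map is evaluated at the local value of the potential. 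Since $\tanh$ depends on the solution value, this is not literally true.

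The cleaner route is to write the current form $\alpha_{\mathcal{J}_2}[u]$ and require $\alpha_{\mathcal{J}_2}[u]=\Psi^*\alpha_{\mathcal{J}_1}[u\circ\Psi]$ for the map $\Psi(y)=F_{u(y)}^{-1}(y)$. Here $\Psi$ depends on $y$ and $u(y)$ only. Its differential is $D\Psi=\partial_yF^{-1}_s+\partial_sF^{-1}_s\otimes du$, which involves only $(y,u,du)$. Hence $\mathcal{J}_2(y,s,\eta)$ is the analytic bundle map defined by this formula with $du$ replaced by $\eta$. It is symmetric because it has the form $D\Psi\,D\Psi^T\eta/\det D\Psi$ composed appropriately, and it vanishes at $\eta=0$. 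The condition $\varepsilon<1/4$ keeps $F_s$ a diffeomorphism for all $s$, and keeps $D\Psi$ invertible for small $\eta$. Since $\Psi|_{\partial B}=\mathrm{id}$ and $d\Psi^*=\Psi^*d$, the solution correspondence $u\leftrightarrow u\circ\Psi^{-1}$ preserves Dirichlet data and boundary current forms. This gives (ii) on neighborhoods of constant data.

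For (i), note that constants satisfy $\Psi=F_s^{-1}$ with $du=0$, so $u_2^s\equiv s$. Ellipticity at $\eta=0$ comes from $\partial_\eta\mathcal{J}_2(y,s,0)=(H_{F_s}\mathrm{id})$, which is positive definite. For (iii), linearize at $u\equiv s$. The $s$-derivative term of $\Psi$ multiplies $du$ in the current, but its contribution is quadratic or proportional to $w\,\partial_s$ terms. I would check that $w_2^h=w_1^h\circ F_s^{-1}$, possibly after absorbing the term $\partial_sF_s\cdot dw_1$, which vanishes on the boundary. Uniqueness in Theorem \ref{thm:magnetic-recovery} then forces $\Phi_s=F_s^{-1}$, which is nonconstant in $s$. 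Finally, full translation covariance fails for $\mathcal{J}_2$: otherwise Proposition \ref{prop:analytic-continuation} would make $\Phi_s$ constant.

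The main obstacle is the linearization check in (iii). The $\partial_s\Psi$ term may add a correction, making the identification $F_s^{-1}$ plus a first-order shift. I would compute $\frac{d}{d\epsilon}(s+\epsilon v)\circ\Psi_\epsilon^{-1}$ directly to confirm that it equals $v\circ F_s$ up to the required form.
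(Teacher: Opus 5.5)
The transport mechanism you land on is the paper's. You push $\mathcal{J}_1$ forward along the potential-dependent graph map $x\mapsto F_{u(x)}(x)$. Its Jacobian $E+b\,\xi^T$, with $E=D_xF_s$ and $b=\partial_sF_s$, depends only on $(x,u,du)$, and the map fixes $\partial M$, so the DN maps agree near constant data.

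\textbf{The gap is the choice $\mathcal{J}_1=\xi$.} Set $c=\det E$, $G=EE^T$, $d=1-b\cdot\eta$ and $r=\eta^TG\eta$. Then $\xi=E^T\eta/d$ and the Jacobian determinant is $c/d$. The Piola transform of the Laplacian current is therefore
\[
 \mathcal{J}_2(y,s,\eta)=\frac{dG\eta+br}{c\,d},\qquad \eta\cdot\mathcal{J}_2=\frac{r}{c\,d}.
\]
At interior points $b=\varepsilon\operatorname{sech}^2(s)(1-|x|^2)e_1\neq0$, so the hyperplane $b\cdot\eta=1$ lies in the real fiber. There the numerator equals $br\neq0$, so $\mathcal{J}_2$ has a genuine pole. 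Beyond it $\eta\cdot\mathcal{J}_2<0$, so no symmetric positive definite $\gamma_2$ with $\mathcal{J}_2=\gamma_2\eta$ can exist.

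This cannot be repaired by changing the current at large gradients. The set $\{b\cdot\eta<1\}$ is a connected half-space containing $\eta=0$. Any current analytic in $\eta$ that agrees with the transport near zero gradient (which is all the DN comparison uses) must agree with it on the whole half-space, and so blows up as $d\to0^+$. Your phrase ``$D\Psi$ invertible for small $\eta$'' concedes exactly this. The construction therefore gives neither a current in the class \eqref{condition:regularity} nor the global assertions of (ii). Those are what make the example relevant to Remark \ref{rem:background-identification}.

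\textbf{The missing idea} is to choose $\mathcal{J}_1$ so that the factor $1/d$ cancels. The paper takes $\mathcal{J}_1(\xi)=\xi/\sqrt{1+|\xi|^2}$. Since $\sqrt{1+|\xi|^2}=\sqrt{d^2+r}/d$ for $d>0$, the transport becomes $(dG\eta+br)/(c\sqrt{d^2+r})$. This extends analytically to all real $(s,\eta)$ because $d^2+r>0$.

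Even then, the global symmetric representation needs a separate device. The Piola-form matrix $S=CGC^T/(c\sqrt{d^2+r})$, with $C=dI+b\eta^T$, satisfies $S\eta=\mathcal{J}_2$ because $C^T\eta=\eta$. However, on $\{d=0\}$ its kernel is $b^\perp$. The paper therefore adds $|\eta|^2I-\eta\eta^T$, whose kernel is spanned by $\eta$, and $\eta\notin b^\perp$ there. The sum is a positive definite $\gamma_2$ on the whole fiber.

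The step you flagged as the main obstacle is harmless. At a constant background $du=0$, so differentiating $v_\epsilon(F_{s+\epsilon w_1(x)}(x))=s+\epsilon w_1(x)$ at $\epsilon=0$ gives exactly $w_2^h=w_1^h\circ F_s^{-1}$. The $\partial_sF_s$ term enters only at second order.
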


\begin{proof}
For a differentiable family of identifications $\Phi_s$, writing $V_s=(\partial_s\Phi_s)\circ\Phi_s^{-1}$ gives
\begin{equation*}
 \partial_s(\Phi_s^*g_{1,s})
 =\Phi_s^*(\partial_sg_{1,s}+\mathcal{L}_{V_s}g_{1,s}),
\end{equation*}
\begin{equation*}
 w_{2,s}^{1,h}
 =\bigl(w_{1,s}^{1,h}+V_sw_{1,s}^h\bigr)\circ\Phi_s.
\end{equation*}
Equality of the two operators after a fixed identification at one background does not imply equality of their background derivatives. The constant-direction part of the second DN derivative is $D^2N_s[1,h]=\partial_s(DN_s[h])$ and does not remove this distinction. General second-order data also involve $\partial_{\xi_j\xi_k}\mathcal{J}^i$. In a gradient-dependent model this tensor can compensate for the moving identification.

This obstruction is realized by an exact invariance of the nonlinear data. In the unit ball, let
\begin{equation*}
 \mathcal{J}_1(\xi)=\frac{\xi}{\sqrt{1+|\xi|^2}},\qquad
 F_s(x)=x+\varepsilon\tanh(s)(1-|x|^2)e_1,
 \quad 0<\varepsilon<\tfrac{1}{4}.
\end{equation*}
Each $F_s$ is a boundary-fixing analytic diffeomorphism. Along a small-gradient solution, set $y=F_{u(x)}(x)$ and $v(y)=u(x)$. At $x=F_s^{-1}(y)$ write
\begin{equation*}
 E=D_xF_s,\quad b=\partial_sF_s,\quad c=\det E>0,
 \quad G=EE^T,\quad d=1-b\cdot\eta,\quad r=\eta^TG\eta.
\end{equation*}
The transformed current is
\begin{equation}\label{eq:potential-dependent-graph-current}
 \mathcal{J}_2(y,s,\eta)=\frac{dG\eta+br}{c\sqrt{d^2+r}}.
\end{equation}
Indeed the Jacobian along the solution graph is $E+b\xi^T$, with $\xi=E^T\eta/d$ and determinant $c/d$. Its Piola transformation gives \eqref{eq:potential-dependent-graph-current} whenever $d>0$. Since the graph map fixes the boundary pointwise, it preserves both the Dirichlet value and the boundary current form. Hence, the two nonlinear DN maps agree on corresponding small neighborhoods of every constant boundary value.

The displayed current extends real analytically to every real $(s,\eta)$ because $d^2+r>0$. Both currents vanish at zero gradient and have positive definite first linearizations there. Their zero-Dirichlet linearizations are invertible by the maximum principle and Schauder theory, so the implicit function theorem gives the local solution branches used above. This is a counterexample within the local elliptic class. No assertion of ellipticity at every real gradient is made.

The obstruction persists even if one requires a symmetric positive definite matrix representation $\mathcal{J}_2=\gamma_2\eta$. Put $C=dI+b\eta^T$ and
\begin{equation*}
 S=\frac{CGC^T}{c\sqrt{d^2+r}},\qquad
 \gamma_2=S+(|\eta|^2I-\eta\eta^T).
\end{equation*}
Then $C^T\eta=\eta$, $S\eta=\mathcal{J}_2$, and $\eta^TS\eta=r/(c\sqrt{d^2+r})>0$ for $\eta\ne0$. The two positive semidefinite summands defining $\gamma_2$ have no common nonzero kernel. At $\eta=0$, $S=G/c>0$. Thus $\gamma_2$ is positive definite and analytic everywhere in the real fiber.

At $s=0$ the two operators and first DN maps equal those of $\Delta$, but with $h=x_1|_{\partial M}$ one has
\begin{equation*}
 w_{1,0}^{1,h}-w_{2,0}^{1,h}
 =\varepsilon(1-|x|^2)\ne0.
\end{equation*}
Here the recovered identifications in \eqref{eq:global-solution-identity} are $\Phi_s=F_s^{-1}$. They cannot be replaced by one fixed map: at $s=0$ the identity of all harmonic solutions forces that map to be the identity, whereas $(F_s^{-1})_1$ varies with $s$. Thus the weaker constant-line hypothesis alone cannot force $\partial_s\Phi_s=0$, even with all nonlinear DN derivatives available.
Since the first current is independent of $s$, its solution map is translation covariant and $w_{1,0}^{1,h}=0$. The displayed nonzero difference therefore gives $w_{2,0}^{1,h}\ne0$, which rules out translation covariance for the second solution map at perturbed data. This completes the counterexample.
\end{proof}

\bigskip\noindent\textbf{Data availability statement.} Data sharing is not applicable to this article, as no datasets were generated or analysed.

\bigskip\noindent\textbf{Conflict of interest.} The author has no conflicts of interest relevant to the content of this article.

\printbibliography{}

\end{document}